\title{Explicit formulas for Masses of Ternary Quadratic Lattices of varying determinant over Number fields}
\author{Jonathan Hanke}
\date{\today}                                           
\begin{document}
\maketitle

\begin{abstract}
This paper gives explicit formulas for the formal 
total mass Dirichlet series for integer-valued ternary quadratic lattices of varying determinant and fixed signature over number fields $F$ where $p=2$ splits completely.   
We prove this by using local genus invariants and local mass formulas to compute the local factors of the theory developed in  \cite{Ha-masses_of_varying_det}. 
When the signature is positive definite these formulas be checked against tables of positive definite ternary quadratic forms over $\mathbb{Z}$, and we have written specialized software \cite{Ha-Sage-Dirichlet, Ha-Sage-Tables, Ha-Sage-Mass} which checks these results when 
the Hessian determinant is $\leq 2 \times 10^4$.
This work can also be applied to study the 2-parts of class groups of cubic fields (e.g. see \cite{Bh-Ha-Sh}).  
\end{abstract}

\newtheorem{thm}{Theorem}[section]  
\newtheorem{lem}[thm]{Lemma}  
\newtheorem{cor}[thm]{Corollary}  
\newtheorem{defn}[thm]{Definition}  
\newtheorem{rem}[thm]{Remark}  
\newtheorem{conj}[thm]{Conjecture}
\newtheorem{question}[thm]{Question}

\renewcommand{\O}{\mathrm{O}}
\newcommand{\SO}{\mathrm{SO}}
\newcommand{\SL}{\mathrm{SL}}
\newcommand{\GL}{\mathrm{GL}}
\newcommand{\ord}{\mathrm{ord}}
\newcommand{\Gen}{\mathrm{Gen}}
\newcommand{\Sym}{\mathrm{Sym}}
\newcommand{\sgn}{\mathrm{sgn}}
\newcommand{\SqCl}{\mathrm{SqCl}}
\newcommand{\Mass}{\mathrm{Mass}}
\newcommand{\Aut}{\mathrm{Aut}}

\newcommand{\N}{\mathbb{N}}
\newcommand{\Z}{\mathbb{Z}}
\newcommand{\Q}{\mathbb{Q}}
\newcommand{\R}{\mathbb{R}}
\newcommand{\C}{\mathbb{C}}
\newcommand{\F}{\mathbb{F}}
\renewcommand{\S}{\mathbb{S}}
\newcommand{\T}{\mathbb{T}}
\newcommand{\U}{\mathbb{U}}
\renewcommand{\O}{\mathbb{O}}
\newcommand{\E}{\mathbb{E}}
\newcommand{\A}{\mathbb{A}}

\newcommand{\x}{\vec x}
\newcommand{\y}{\vec y}
\renewcommand{\c}{\vec c}

\newcommand{\M}{\mathcal{M}}
\renewcommand{\H}{\mathcal{H}}

\renewcommand{\[}{\left[}
\renewcommand{\]}{\right]}
\renewcommand{\(}{\left(}
\renewcommand{\)}{\right)}

\newcommand{\ra}{\rightarrow}
\newcommand{\al}{\alpha}
\newcommand{\ve}{\varepsilon}

\newcommand{\leg}[2]{\(\frac{#1}{#2}\)}

\newcommand{\p}{\mathfrak{p}}
\newcommand{\q}{\mathfrak{q}}
\newcommand{\OF}{\mathcal{O}_F}
\newcommand{\Op}{\mathcal{O}_\p}
\newcommand{\Oq}{\mathcal{O}_\q}
\newcommand{\Ov}{\mathcal{O}_v}

\newcommand{\Supp}{\mathrm{Supp}}

\newcommand{\n}{\mathfrak{n}}
\newcommand{\s}{\mathfrak{s}}
\renewcommand{\v}{\mathfrak{v}}
\newcommand{\g}{\mathcal{G}}

\newcommand{\I}{\mathfrak{I}}

\renewcommand{\P}{\mathcal{P}}
\newcommand{\G}{\mathcal{G}}

\newcommand{\SqAfInt}{\SqCl(\A_{F, \mathbf{f}}^\times, U_\mathbf{f})}
\newcommand{\SqAf}{\SqCl(\A_{F, \mathbf{f}}^\times)}

\newcommand{\SqFInt}{\SqCl(F^\times, \OF^\times)}
\newcommand{\SqF}{\SqCl(F^\times)}

\newcommand{\SqFpInt}{\SqCl(F_\p^\times, \Op^\times)}
\newcommand{\SqFp}{\SqCl(F_\p^\times)}

\newcommand{\SqFvInt}{\SqCl(F_v^\times, \Ov^\times)}
\newcommand{\SqFv}{\SqCl(F_v^\times)}

\newcommand{\pip}{\pi_\p}

\newcommand{\TableHasseOne}
{
\begin{sideways}
\begin{minipage}{\textheight}
$$
\begin{tabular}{  c ||  c  |  c  ||  c   | c  || c  | c | c |}
\multirow{2}{*}{\text{\#}}   & Allowed  & Partial Local & \# of &  Trains in Valuation & \multirow{2}{*}{Cases }& \multicolumn{2}{c|}{\# of Hasse Invariants}
\\
& $\nu = \ord_2(\det_G(Q))$ & Genus Symbol & Trains & Adjustment &   & $\hspace{.13in}c_2 = 1\hspace{.13in}$ & $c_2 = -1$
\\
\hline 
\hline
%
%
%
1 & $\nu = 0$ & (3) & 1 & 0 & -- & 1 & 1 
\\
\hline
\hline
2 & $\nu = 1$ & (2, 1) & 1 & 1 & -- & 1 & 1 
\\
\hline
\multirow{2}{*}{3} & \multirow{2}{*}{$\nu = 2$} & \multirow{2}{*}{(2; 1)} & \multirow{2}{*}{1} &  \multirow{2}{*}{0} & $u \equiv 1_{(4)}$ 
    & 1 & 2 
    \\
 & & &  & & $u \equiv 3_{(4)}$ 
    & 2 & 1 
    \\
\hline
\multirow{4}{*}{4} & \multirow{4}{*}{$\nu = b \geq 3$} & \multirow{4}{*}{(2 :: 1)} & \multirow{4}{*}{2} & \multirow{2}{*}{0 if $b$ is even} & $u \equiv 1_{(4)}$
    & 2 & 4 
    \\
 &  & &  &  & $u \equiv 3_{(4)}$
    & 4 & 2 
    \\
\cline{5-8}
 & &  & & \multirow{2}{*}{1 if $b$ is odd} & \multirow{2}{*}{--} 
    & \multirow{2}{*}{3} & \multirow{2}{*}{3}  
    \\
 & &  & &  & 
    &  &   
    \\
\hline
\multirow{2}{*}{5} & \multirow{2}{*}{$\nu = -2$} & \multirow{2}{*}{$(\bar{2}, 1)$} 
	& \multirow{2}{*}{1} & \multirow{2}{*}{0} 
	& \multirow{1}{*}{$u \equiv 1_{(4)}$} & 0 & 1
\\
 & & &  & & $u \equiv 3_{(4)}$ 
    & 1 & 0 
    \\
\hline
\multirow{4}{*}{6} & \multirow{4}{*}{$\nu = b-2 \geq -1$} & \multirow{4}{*}{$(\bar{2} :: 1)$} & \multirow{4}{*}{2} & \multirow{2}{*}{0 if $b$ is even} & $u \equiv 1_{(4)}$
    & 0 & 2 
    \\
 &  & &  &  & $u \equiv 3_{(4)}$
    & 2 & 0 
    \\
\cline{5-8}
 & &  & & \multirow{2}{*}{1 if $b$ is odd} & \multirow{2}{*}{--}
    & \multirow{2}{*}{1} & \multirow{2}{*}{1}  
    \\
 & &  & &  & 
    &   &   
    \\
\hline
\hline
7 & $\nu = 2$ & (1, 2) & 1 & 1 & -- & 1 & 1 
\\
\hline
\multirow{2}{*}{8} & \multirow{2}{*}{$\nu = 4$} & \multirow{2}{*}{(1; 2)} 
	& \multirow{2}{*}{1} & \multirow{2}{*}{0}
	&  $u \equiv 1_{(4)}$ & 1 & 2 
\\
 & & & & & $u \equiv 3_{(4)}$ & 2 & 1 
\\
\hline
\multirow{4}{*}{9} & \multirow{4}{*}{$\nu = 2b \geq 6$} & \multirow{4}{*}{(1 :: 2)} 
	& \multirow{4}{*}{2} & \multirow{2}{*}{0 if $b$ is even}
	& $u \equiv 1_{(4)}$ & 2 & 4 
\\
 & & & & & $u \equiv 3_{(4)}$ & 4 & 2
 \\
\cline{5-8}
 & &  & & \multirow{2}{*}{1 if $b$ is odd} 
 	& \multirow{2}{*}{--}  & \multirow{2}{*}{3} & \multirow{2}{*}{3} 
 \\
 & & & & & & & 
\\
\hline
\multirow{2}{*}{10} & \multirow{2}{*}{$\nu = 2$} & \multirow{2}{*}{$(1, \bar{2})$} 
	& \multirow{2}{*}{1} & \multirow{2}{*}{0} 
	& \multirow{1}{*}{$u \equiv 1_{(4)}$} & 0 & 1
\\
 & & &  & & $u \equiv 3_{(4)}$ 
    & 1 & 0 
    \\
\hline

\multirow{4}{*}{11} & \multirow{4}{*}{$\nu = 2b \geq 4$} & \multirow{4}{*}{$(1 :: \bar{2})$}  
	& \multirow{4}{*}{2} & \multirow{2}{*}{0 if $b$ is odd}
	& $u \equiv 1_{(4)}$ & 0 & 2
 \\
 & & & & & $u \equiv 3_{(4)}$ & 2 & 0 
 \\
\cline{5-8}
 & & & & \multirow{2}{*}{1 if $b$ is even}
 	& \multirow{2}{*}{--} & \multirow{2}{*}{1} & \multirow{2}{*}{1}
 \\
 & & & & & & &
\\
\hline
\hline
\end{tabular}
$$
\centerline{\bf Table 5.1: Hasse invariant Distributions when $p=2$}
\end{minipage}
\end{sideways}
}

\newcommand{\TableHasseTwo}
{
\begin{sideways}
\begin{minipage}{\textheight}
$$
\begin{tabular}{  c ||  c  |  c  ||  c   | c  || c  | c | c | }
\multirow{2}{*}{\text{\#}}   & Allowed  & Partial Local & \# of &  Trains in Valuation & \multirow{2}{*}{Cases }& \multicolumn{2}{c|}{\# of Hasse Invariants}
\\
& $\nu = \ord_2(\det_G(Q))$ & Genus Symbol & Trains & Adjustment &   & $\hspace{.13in}c_2 = 1\hspace{.13in}$ & $c_2 = -1$ 
\\
\hline 
\hline
\multirow{2}{*}{12} & \multirow{2}{*}{$\nu = 3$} & \multirow{2}{*}{(1, 1, 1)} 
	& \multirow{2}{*}{1} & \multirow{2}{*}{1}
	& \multirow{2}{*}{--} 
	& \multirow{2}{*}{1} & \multirow{2}{*}{1} 
\\
& & & & 
	&&&
\\
\hline
\multirow{2}{*}{13} & \multirow{2}{*}{$\nu = 4$} & \multirow{2}{*}{(1, 1; 1)} 
	& \multirow{2}{*}{1} & \multirow{2}{*}{1}
	& \multirow{2}{*}{--} 
	& \multirow{2}{*}{2} & \multirow{2}{*}{2} 
\\
& & & & & 
	&  &  
\\
\hline
\multirow{2}{*}{14} & \multirow{2}{*}{$\nu = 5$} & \multirow{2}{*}{(1; 1, 1)} 
	& \multirow{2}{*}{1} & \multirow{2}{*}{1}
	& \multirow{2}{*}{--}
	& \multirow{2}{*}{2} & \multirow{2}{*}{2} 
\\
& & & & & 
	&  &  
\\
\hline
\multirow{2}{*}{15} & \multirow{2}{*}{$\nu = 6$} & \multirow{2}{*}{(1; 1; 1)}  
	& \multirow{2}{*}{1} & \multirow{2}{*}{0}
	& \multirow{1}{*}{$u \equiv 1_{(4)}$}
	& 1 & 3 
\\
& & & & & $u \equiv 3_{(4)}$
	& 3 & 1 
\\
\hline
\multirow{2}{*}{16} & \multirow{2}{*}{$\nu = c + 1\geq 5$} & \multirow{2}{*}{(1, 1 :: 1)}  
	& \multirow{2}{*}{2} & \multirow{2}{*}{2} 
	& \multirow{2}{*}{--}
	& \multirow{2}{*}{4} & \multirow{2}{*}{4} 
 \\
 & & & & & 
 	& &  
\\
\hline
17 & $\nu = 2b+1 \geq 7$ & (1 :: 1, 1)  & 2 & 2 
	& -- & 4 & 4 
\\
\hline
\multirow{4}{*}{18} & \multirow{4}{*}{$\nu = 2b+2 \geq 8$} & \multirow{4}{*}{(1 :: 1; 1)}  
	& \multirow{4}{*}{2} & \multirow{2}{*}{0 if $b$ is even}
	& $u \equiv 1_{(4)}$
	& 2 & 6 
\\
& & & & & $u \equiv 3_{(4)}$
	& 6 & 2 
\\
\cline{5-8}
 & & & & \multirow{2}{*}{1 if $b$ is odd}
 	& \multirow{2}{*}{--}
	& \multirow{2}{*}{4} & \multirow{2}{*}{4} 
\\
& & & & & 
	&  &  
\\
\hline
\multirow{4}{*}{19} & \multirow{4}{*}{$\nu = c+2 \geq 7$} & \multirow{4}{*}{(1; 1:: 1)}  
	& \multirow{4}{*}{2} & \multirow{2}{*}{0 if $c$ is even}
	& $u \equiv 1_{(4)}$
	& 2 & 6 
\\
& & & & & $u \equiv 3_{(4)}$
	& 6 & 2 
\\
\cline{5-8}
 & & & & \multirow{2}{*}{1 if $c$ is odd}
 	& \multirow{2}{*}{--}
	& \multirow{2}{*}{4} & \multirow{2}{*}{4} 
\\
& & & & & 
	&  &  
\\
\hline
\multirow{4}{*}{20} & \multirow{2}{*}{$\nu = b+ c \geq 9$} & \multirow{4}{*}{(1 :: 1 :: 1)}  
	& \multirow{4}{*}{3} & \multirow{2}{*}{0 if $b$ and $c$ are even}
	& $u \equiv 1_{(4)}$
	& 4 & 12 
\\
& & & & & $u \equiv 3_{(4)}$
	& 12 & 4 
\\
\cline{5-8}
& \multirow{2}{*}{$ b \geq 3,  c \geq b+3$} &  &  & \multirow{2}{*}{2 otherwise} 
	& \multirow{2}{*}{--}
	& \multirow{2}{*}{8} & \multirow{2}{*}{8} 
\\
& & & & & & &
\\
\hline
%
%
\end{tabular}
$$
\centerline{\bf Table 5.1: Hasse invariant Distributions when $p=2$ (continued)}
\end{minipage}
\end{sideways}
}


\newcommand{\TableMassOne}
{
\begin{sideways}
\begin{minipage}{\textheight}
\vspace{-1in}
$$ \hspace{-1.5in}
\begin{tabular}{  c ||  c  |  c  ||  c  | c  | c  | c  || c || c ||  }
\multirow{2}{*}{\text{\#}}   & Allowed  
	& Partial Local
	& Species &  Diagonal 
	& Cross & Type & \multirow{1}{*}{$2$-mass}
	& Normalized Densities
\\
& $\nu = \ord_2(\det_G(Q))$ & Genus Symbol & List  & Factor 
	& Product  & Factor & $m_2(Q)$
    	& $(1- \frac{1}{2^2})\cdot\beta_{2, Q}^{-1}(Q)$ 
\\
\hline 
\hline
%
%
%
\multirow{4}{*}{1} & \multirow{4}{*}{$\nu = 0$} & \multirow{4}{*}{(3)} 
	& \multirow{2}{*}{[1, {\bf 2+}, 1] if I${}_3$-octane $\equiv \pm1_{(8)}$} 
	& \multirow{2}{*}{$\frac{1}{4}$} 
	& \multirow{4}{*}{$1$} & \multirow{4}{*}{$1$}
	& \multirow{2}{*}{$\frac{1}{4}$} 
	& \multirow{2}{*}{$\frac{3}{8}$} 
\\
&&&&&&&& \\
\cline{4-5} \cline{8-9}
&&	& \multirow{2}{*}{[1, {\bf 2-}, 1] if I${}_3$-octane $\equiv \pm3_{(8)}$} 
	& \multirow{2}{*}{$\frac{1}{12}$}
	&&
	& \multirow{2}{*}{$\frac{1}{12}$}
	& \multirow{2}{*}{$\frac{1}{8}$} \\
&&&&&&&& \\
\hline
\hline
\multirow{2}{*}{2} & \multirow{2}{*}{$\nu = 1$} & \multirow{2}{*}{(2, 1)} 
	& \multirow{2}{*}{[1, 1, 1, 1]} 
	& \multirow{2}{*}{$2^{-4}$} & \multirow{2}{*}{$2^1$} & \multirow{2}{*}{$2^1$} 
	& \multirow{2}{*}{$2^{-2}$} 
	& \multirow{2}{*}{$2^{-5} \cdot 3$}
    \\
 & &  & &&&&&
\\
\hline
\multirow{4}{*}{3} & \multirow{4}{*}{$\nu = 2$} & \multirow{4}{*}{(2; 1)} 
	& \multirow{2}{*}{[1, {\bf 1}, 1, 0, 1] if I${}_2$-oddity $\equiv 2_{(4)}$} 
	& \multirow{2}{*}{$2^{-4}$} & \multirow{4}{*}{$2^2$} & \multirow{4}{*}{$2^0$} 
	& \multirow{2}{*}{$2^{-2}$} 
	& \multirow{2}{*}{$2^{-7} \cdot 3$}
   \\
 & & & &  &&&&
    \\
\cline{4-5} \cline{8-9}
 & & &  \multirow{2}{*}{[1, {\bf 0}, 1, 0, 1] if I${}_2$-oddity $\equiv 0_{(4)}$} 
 	& \multirow{2}{*}{$2^{-3}$} &&
	& \multirow{2}{*}{$2^{-1}$}
	& \multirow{2}{*}{$2^{-6} \cdot 3$}
    \\
 & & &  & &&&&
    \\
\hline
\multirow{4}{*}{4} & \multirow{4}{*}{$\nu = b \geq 3$} & \multirow{4}{*}{(2 :: 1)} 
	& \multirow{2}{*}{[1, {\bf 1}, 1, :: 1, 0, 1] if I${}_2$-oddity $\equiv 2_{(4)}$} 
	& \multirow{2}{*}{$2^{-5}$} & \multirow{4}{*}{$2^{b}$} & \multirow{4}{*}{$2^0$} 
	& \multirow{2}{*}{$2^{b-5}$}
	& \multirow{2}{*}{$2^{-b-6} \cdot 3$}
    \\
 &  & &&&&&&
    \\
\cline{4-5} \cline{8-9}
 & &  
 	&\multirow{2}{*}{[1, {\bf 0}, 1, :: 1, 0, 1] if I${}_2$-oddity $\equiv 0_{(4)}$} 
	& \multirow{2}{*}{$2^{-4}$} & \multirow{1}{*}{}
	&  & \multirow{2}{*}{$2^{b-4}$} 
	& \multirow{2}{*}{$2^{-b-5} \cdot 3$}
    \\
 & &  &&&&&&
    \\
\hline
\multirow{2}{*}{5} & \multirow{2}{*}{$\nu = -2$} & \multirow{2}{*}{$(\bar{2}, 1)$} 
	& \multirow{2}{*}{[3, 0, 1]}  
	& \multirow{2}{*}{$\frac{1}{3}$} & \multirow{2}{*}{$2^1$} & \multirow{2}{*}{$2^{-2}$} 
	& \multirow{2}{*}{$\frac{1}{6}$} 
	& \multirow{2}{*}{$4$}
\\
&&&&&&&&\\
\hline
\multirow{4}{*}{6} & \multirow{4}{*}{$\nu = b-2 \geq -1$} & \multirow{4}{*}{$(\bar{2} :: 1)$} 
	& \multirow{2}{*}{[{\bf 2+}, :: 1, 0, 1] if II${}_2$-sign is $+$} 
	& \multirow{2}{*}{$\frac{1}{4}$} 
	& \multirow{4}{*}{$2^{b + 1}$} & \multirow{4}{*}{$2^{-2}$} 
	& \multirow{2}{*}{$2^{b-3}$} 
	& \multirow{2}{*}{$3 \cdot 2^{-b}$}
\\
&&&&&&&&\\
\cline{4-5} \cline{8-9}
&& & \multirow{2}{*}{[{\bf 2-}, :: 1, 0, 1] if II${}_2$-sign is $-$} 
	& \multirow{2}{*}{$\frac{1}{12}$} &&
	& \multirow{2}{*}{$\frac{1}{3} \cdot 2^{b-3}$} 
	& \multirow{2}{*}{$2^{-b}$}
	\\
&&&&&&&&\\
\hline
\hline
\multirow{2}{*}{7} & \multirow{2}{*}{$\nu = 2$} & \multirow{2}{*}{$(1, 2)$} 
	& \multirow{2}{*}{[1, 1, 1, 1]} 
	& \multirow{2}{*}{$2^{-4}$} & \multirow{2}{*}{$2^1$} & \multirow{2}{*}{$2^1$} 
	& \multirow{2}{*}{$2^{-2}$} 
	& \multirow{2}{*}{$2^{-7} \cdot 3$}
\\
 & & &&&&&&
\\
\hline
\multirow{4}{*}{8} & \multirow{4}{*}{$\nu = 4$} & \multirow{4}{*}{(1; 2)} 
	& \multirow{2}{*}{[1, 0, 1, {\bf 1}, 1] if I${}_2$-oddity $\equiv 2_{(4)}$}  
	& \multirow{2}{*}{$2^{-4}$} & \multirow{4}{*}{$2^{2}$} & \multirow{4}{*}{$2^0$} 
	& \multirow{2}{*}{$2^{-2}$} 
	& \multirow{2}{*}{$2^{-11} \cdot 3$}
\\
 & & &&&&&&
\\
\cline{4-5} \cline{8-9}
 & & 
 	& \multirow{2}{*}{[1, 0, 1, {\bf 0}, 1] if I${}_2$-oddity $\equiv 0_{(4)}$}  
	& \multirow{2}{*}{$2^{-3}$} & \multirow{1}{*}{} & \multirow{1}{*}{} 
	& \multirow{2}{*}{$2^{-1}$} 
	& \multirow{2}{*}{$2^{-10} \cdot 3$}
\\
 & & &&&&&&
\\
\hline
\multirow{4}{*}{9} & \multirow{4}{*}{$\nu = 2b \geq 6$} & \multirow{4}{*}{(1 :: 2)} 
	& \multirow{2}{*}{[1, 0, 1, :: 1, {\bf 1}, 1]  if I${}_2$-oddity $\equiv 2_{(4)}$} 
	& \multirow{2}{*}{$2^{-5}$} & \multirow{4}{*}{$2^{b}$} & \multirow{4}{*}{$2^0$} 
	& \multirow{2}{*}{$2^{b-5}$} 
	& \multirow{2}{*}{$2^{-3b-6} \cdot 3$}
\\
 & & & & & & & &
 \\
\cline{4-5} \cline{8-9}
 & &   
	& \multirow{2}{*}{[1, 0, 1, :: 1, {\bf 0}, 1]  if I${}_2$-oddity $\equiv 0_{(4)}$}  
	& \multirow{2}{*}{$2^{-4}$} & \multirow{1}{*}{} & 
	& \multirow{2}{*}{$2^{b-4}$} 
	& \multirow{2}{*}{$2^{-3b-5} \cdot 3$}
 \\
 & & & 
 	& \multirow{1}{*}{} & \multirow{1}{*}{} & \multirow{1}{*}{} & &
\\
\hline
\multirow{2}{*}{10} & \multirow{2}{*}{$\nu = 2$} & \multirow{2}{*}{$(1, \bar{2})$}  
	& \multirow{2}{*}{[1, 0, 3]} 
	& \multirow{2}{*}{$\frac{1}{6}$} & \multirow{2}{*}{$2^1$} & \multirow{2}{*}{$2^{-2}$} 
	& \multirow{2}{*}{$\frac{1}{6}$} 
	& \multirow{2}{*}{$2^{-6}$}
\\
&&&&&&&&\\
\hline
\multirow{4}{*}{11} & \multirow{4}{*}{$\nu = 2b \geq 4$} & \multirow{4}{*}{$(1 :: \bar{2})$}  
	& \multirow{2}{*}{[1, 0, 1, :: {\bf 2+}] if II${}_2$-sign is $+$} 
	& \multirow{2}{*}{$\frac{1}{4}$}  
	& \multirow{4}{*}{$2^b$} & \multirow{4}{*}{$2^{-2}$} 
	& \multirow{2}{*}{$2^{b-4}$} 
	& \multirow{2}{*}{$3 \cdot 2^{-3b-5}$}
\\
&&&&&&&&\\
\cline{4-5} \cline{8-9}
&&& \multirow{2}{*}{[1, 0, 1, :: {\bf 2-}] if II${}_2$-sign is $-$} 
	& \multirow{2}{*}{$\frac{1}{12}$}  &&
	& \multirow{2}{*}{$\frac{1}{3}\cdot 2^{b-4}$} 
	&  \multirow{2}{*}{$2^{-3b-5}$}
\\
&&&&&&&&\\
\hline
\hline
\end{tabular}
$$
\centerline{\bf Table 5.2:  Normalized Local Mass Factors when $p=2$}
\end{minipage}
\end{sideways}
}

\newcommand{\TableMassTwo}
{
\begin{sideways}
\begin{minipage}{\textheight}
$$\hspace{-1in}
\begin{tabular}{  c ||  c  |  c  ||  c   | c  | c  | c || c || c || }
\multirow{2}{*}{\text{\#}}   & Allowed  & Partial Local & Species &  Diagonal 
	& Cross & Type & \multirow{1}{*}{$2$-mass}
	& Normalized Densities
\\
& $\nu = \ord_2(\det_G(Q))$ & Genus Symbol & List  & Factor 
	& Product  & Factor & $m_2(Q)$
    	& $(1- \frac{1}{2^2})\cdot\beta_{2, Q}^{-1}(Q)$ 
\\
\hline 
\hline
\multirow{2}{*}{12} & \multirow{2}{*}{$\nu = 3$} & \multirow{2}{*}{(1, 1, 1)} 
	& \multirow{2}{*}{[1, 1, 1, 1, 1]} 
	& \multirow{2}{*}{$2^{-5}$} & \multirow{2}{*}{$2^2$} &  \multirow{2}{*}{$2^2$}
	& \multirow{2}{*}{$2^{-1}$}
	& \multirow{2}{*}{$2^{-8} \cdot 3$}
	\\
& & & & & & & &
\\
\hline
\multirow{2}{*}{13} & \multirow{2}{*}{$\nu = 4$} & \multirow{2}{*}{(1, 1; 1)} 
	& \multirow{2}{*}{[1, 1, 1, 1, 0, 1]} 
	& \multirow{2}{*}{$2^{-5}$} & \multirow{2}{*}{$2^3$} & \multirow{2}{*}{$2^1$} 
	& \multirow{2}{*}{$2^{-1}$}
	& \multirow{2}{*}{$2^{-10} \cdot 3$}
\\
& & & & & & & &
\\
\hline
\multirow{2}{*}{14} & \multirow{2}{*}{$\nu = 5$} & \multirow{2}{*}{(1; 1, 1)} 
	& \multirow{2}{*}{[1, 0, 1, 1, 1, 1]} 
	& \multirow{2}{*}{$2^{-5}$} & \multirow{2}{*}{$2^3$} &  \multirow{2}{*}{$2^1$} 
	& \multirow{2}{*}{$2^{-1}$}
	& \multirow{2}{*}{$2^{-12} \cdot 3$}
\\
& & & & & & & &
\\
\hline
\multirow{2}{*}{15} & \multirow{2}{*}{$\nu = 6$} & \multirow{2}{*}{(1; 1; 1)}  
	& \multirow{2}{*}{[1, 0, 1, 0, 1, 0, 1]} 
	& \multirow{2}{*}{$2^{-4}$} & \multirow{2}{*}{$2^4$} & \multirow{2}{*}{$2^0$} 
	& \multirow{2}{*}{$2^{0}$}
	& \multirow{2}{*}{$2^{-13} \cdot 3$}
\\
& & & & & & & &
\\
\hline
\multirow{2}{*}{16} & \multirow{2}{*}{$\nu = c +1 \geq 5$} & \multirow{2}{*}{(1, 1 :: 1)}  
	& \multirow{2}{*}{[1, 1, 1, 1, :: 1, 0, 1]} 
	& \multirow{2}{*}{$2^{-6}$} & \multirow{2}{*}{$2^c$} & \multirow{2}{*}{$2^1$} 
	& \multirow{2}{*}{$2^{c-5}$}
	& \multirow{2}{*}{$2^{-c-8} \cdot 3$}
 \\
 & & & & & & & &
\\ 
\hline
\multirow{2}{*}{17} & \multirow{2}{*}{$\nu = 2b+1 \geq 7$} & \multirow{2}{*}{(1 :: 1, 1)} 
	& \multirow{2}{*}{[1, 0, 1, :: 1, 1, 1, 1]} 
	& \multirow{2}{*}{$2^{-6}$} & \multirow{2}{*}{$2^{b+1}$} & \multirow{2}{*}{$2^1$} 
	& \multirow{2}{*}{$2^{b-4}$}
	& \multirow{2}{*}{$2^{-3b-7} \cdot 3$}
\\
& & & & & & & &
\\
\hline
\multirow{2}{*}{18} & \multirow{2}{*}{$\nu = 2b+2 \geq 8$} & \multirow{2}{*}{(1 :: 1; 1)}  
	& \multirow{2}{*}{[1, 0, 1, :: 1, 0, 1, 0, 1]} 
	& \multirow{2}{*}{$2^{-5}$} & \multirow{2}{*}{$2^{b+2}$} & \multirow{2}{*}{$2^0$} 
	& \multirow{2}{*}{$2^{b-3}$}
	& \multirow{2}{*}{$2^{-3b-8} \cdot 3$}
	\\
& & & & & & & &
\\
\hline
\multirow{2}{*}{19} & \multirow{2}{*}{$\nu = c+2 \geq 7$} & \multirow{2}{*}{(1; 1 :: 1)}  
	& \multirow{2}{*}{[1, 0, 1, 0, 1, :: 1, 0, 1]} 
	& \multirow{2}{*}{$2^{-5}$} & \multirow{2}{*}{$2^c$} & \multirow{2}{*}{$2^0$} 
	& \multirow{2}{*}{$2^{c-5}$}
	& \multirow{2}{*}{$2^{-c-10} \cdot 3$}
\\
& & & & & & & &
\\
\hline
\multirow{2}{*}{20} & \multirow{2}{*}{$\nu = b+ c \geq 9$} & \multirow{2}{*}{(1 :: 1 :: 1)}  
	& \multirow{2}{*}{[1, 0, 1, :: 1, 0, 1, :: 1, 0, 1]} 
	& \multirow{2}{*}{$2^{-6}$} & \multirow{2}{*}{$2^c$}  & \multirow{2}{*}{$2^0$} 
	& \multirow{2}{*}{$2^{c-6}$}
	& \multirow{2}{*}{$2^{-2b-c-7} \cdot 3$}
\\
& & & & & & & &
\\
\hline
%
%
\end{tabular}
$$
\centerline{\bf Table 5.2: Normalized Local Mass Factors when $p=2$ (continued)}
\end{minipage}
\end{sideways}
}


\newcommand{\TableEulerOne}
{
\begin{sideways}
\begin{minipage}{\textheight}
$$
\begin{tabular}{  c ||  c  |  c  ||  c   | c  || c  | c | c || c || }
\multirow{2}{*}{\text{\#}}   & Allowed  & \multirow{2}{*}{PLGS} & \multirow{2}{*}{$4 \cdot A^*_{2^\nu}$} & \multirow{2}{*}{$4 \cdot B^*_{2^\nu}$} &  \multirow{1}{*}{Cases determining} 
	& \multicolumn{2}{c||}{$c_2 =$} 
	& Normalized Densities
\\
& $\nu = \ord_2(\det_G(Q))$ &  & & 
	& the $c_2$ distribution
	&  $+1$ &  $-1$ 
	& $(1- \frac{1}{2^2})\cdot\beta_{2, Q}^{-1}(Q)$
\\
\hline 
\hline
%
%
%
\multirow{2}{*}{1} & \multirow{2}{*}{$\nu = 0$} & \multirow{2}{*}{(3)} 
	& \multirow{2}{*}{$2^{-1}$}
	& $-2^{-2}$ 
	& $u \equiv 1_{(4)}$ 
	& \multirow{2}{*}{1} & \multirow{2}{*}{1}
	&  \multirow{2}{*}{$\frac{3}{8}$ or $\frac{1}{8}$}
\\
\cline{5-6}
&&&
	& $2^{-2}$
	& $u \equiv 3_{(4)}$ 
	&
	&
	&
\\
\hline
\hline
\multirow{1}{*}{2} & \multirow{1}{*}{$\nu = 1$} & \multirow{1}{*}{(2, 1)}  
	& \multirow{1}{*}{$2^{-4} \cdot 3$} & \multirow{1}{*}{0}
	& \multirow{1}{*}{--} & 1 & 1 
	& \multirow{1}{*}{$2^{-5} \cdot 3$}
 \\
\hline
\multirow{2}{*}{3} & \multirow{2}{*}{$\nu = 2$} & \multirow{2}{*}{(2; 1)} 
	& \multirow{2}{*}{$2^{-5} \cdot 3$} 
	&  \multirow{1}{*}{$-2^{-6}\cdot 3$} 
	& $u \equiv 1_{(4)}$ 
    & 1 & 2 
    & \multirow{2}{1.5in}{$2^{-7} \cdot 3$ (twice) or $2^{-6} \cdot 3$ (once)}
    \\
\cline{5-8}
 & & &  
 	& \multirow{1}{*}{$2^{-6}\cdot 3$} 
	& $u \equiv 3_{(4)}$ 
    & 2 & 1 
    &
    \\
\hline
\multirow{4}{*}{4} & \multirow{4}{*}{$\nu = b \geq 3$} & \multirow{4}{*}{(2 :: 1)} 
	& \multirow{4}{*}{$2^{-b-3} \cdot 3$} 
	&  \multirow{1}{*}{$-2^{-b-4} \cdot 3$} & \multirow{1}{*}{if $b$ is even and $u \equiv 1_{(4)}$} 
    & \multirow{1}{*}{2} & \multirow{1}{*}{4} 
    & \multirow{4}{1.8in}{$2^{-b-6} \cdot 3$ (II-oddity $\equiv 2_{(4)}$) or $2^{-b-5} \cdot 3$ (II-odd. $\equiv 0_{(4)}$)}
    \\
\cline{5-8}
 &  &  &  & $2^{-b-4} \cdot 3$ & \multirow{1}{*}{if $b$ is even and $u \equiv 3_{(4)}$}
    & 4 & 2
    &
    \\
\cline{5-8}
 & &  & & \multirow{2}{*}{$0$} & \multirow{2}{*}{if $b$ is odd}
    &  \multirow{2}{*}{3} & \multirow{2}{*}{3} 
    &
    \\
 & &  & &  & 
    & &
    &
        \\
\hline
\multirow{2}{*}{5} & \multirow{2}{*}{$\nu = -2$} & \multirow{2}{*}{$(\bar{2}, 1)$} 
	& \multirow{2}{*}{$4$} 
	&  \multirow{1}{*}{$-4$} 
	& $u \equiv 1_{(4)}$ 
    & 0 & 1 
    & \multirow{2}{*}{$4$}
    \\
\cline{5-8}
 & & &  
 	& \multirow{1}{*}{$4$} 
	& $u \equiv 3_{(4)}$ 
    & 1 & 0 
    &
    \\
\hline

\multirow{4}{*}{6} & \multirow{4}{*}{$\nu = b-2 \geq -1$} & \multirow{4}{*}{$(\bar{2} :: 1)$} 
	& \multirow{4}{*}{$2^{2-b}$} 
	&  \multirow{1}{*}{$-2^{2-b}$} & \multirow{1}{*}{if $b$ is even and $u \equiv 1_{(4)}$} 
    & \multirow{1}{*}{0} & \multirow{1}{*}{2} 
    & \multirow{4}{1.5in}{$3\cdot 2^{-b}$ (II-sign = $+$)  or $2^{-b}$ (II-sign = $-$)}
    \\
 \cline{5-8}
 &  & &  &   $2^{2-b}$ & \multirow{1}{*}{if $b$ is even and $u \equiv 3_{(4)}$}
    &  2 & 0
    &
    \\
\cline{5-8}
 & &  & & \multirow{1}{*}{$-2^{1-b}$} & \multirow{1}{*}{if $b$ is odd and $u \equiv 1_{(4)}$}
    &  \multirow{2}{*}{1} & \multirow{2}{*}{1} 
    &
    \\
 \cline{5-6}
 & &  & & $2^{1-b}$ & \multirow{1}{*}{if $b$ is odd and $u \equiv 3_{(4)}$} 
    & &
    &
        \\
\hline
\hline
\multirow{1}{*}{7} & \multirow{1}{*}{$\nu = 2$} & \multirow{1}{*}{(1, 2)}  
	& \multirow{1}{*}{$2^{-6} \cdot 3$} & \multirow{1}{*}{0}
	& \multirow{1}{*}{--} & 1 & 1 
	& \multirow{1}{*}{$2^{-7} \cdot 3$}
 \\
\hline
\multirow{2}{*}{8} & \multirow{2}{*}{$\nu = 4$} & \multirow{2}{*}{(1; 2)} 
	& \multirow{2}{*}{$2^{-9} \cdot 3$} 
	& \multirow{1}{*}{$-2^{-10} \cdot 3$}
	&  $u \equiv 1_{(4)}$ & 1 & 2 
	& \multirow{2}{1.8in}{$2^{-11} \cdot 3$ ($I_2$-oddity $\equiv 2_{(4)}$) or $2^{-10} \cdot 3$ ($I_2$-odd. $\equiv 0_{(4)}$)}
\\
\cline{5-8}
 & & & 
 	& \multirow{1}{*}{$2^{-10} \cdot 3$} 
	& $u \equiv 3_{(4)}$ & 2 & 1 
	&
\\
\hline
\multirow{4}{*}{9} & \multirow{4}{*}{$\nu = 2b \geq 6$} & \multirow{4}{*}{(1 :: 2)} 
	& \multirow{4}{*}{$2^{-3b-3} \cdot 3$} 
	& \multirow{2}{*}{$0$}
	&  \multirow{2}{*}{if $b$ is odd} 
	 & \multirow{2}{*}{3} & \multirow{2}{*}{3} 
	 &  \multirow{4}{1.9in}{$2^{-3b-6} \cdot 3$  
	 ($I_2$-oddity $\equiv 2_{(4)}$) or $2^{-3b-5} \cdot 3$  ($I_2$-odd. $\equiv 0_{(4)}$) }
\\
 & & & & & 
 	&  & 
	&
 \\
\cline{5-8}
 & &  & & \multirow{1}{*}{$-2^{-3b-4} \cdot 3$} 
 	& \multirow{1}{*}{if $b$ is even and $u \equiv 1_{(4)}$} 
	& \multirow{1}{*}{2} & \multirow{1}{*}{4} 
	&
 \\
\cline{5-8}
 & & & &  \multirow{1}{*}{$2^{-3b-4} \cdot 3$}
 	& \multirow{1}{*}{if $b$ is even and $u \equiv 3_{(4)}$}
 	&  4 & 2 
	&
\\
\hline
\multirow{2}{*}{10} & \multirow{2}{*}{$\nu = 2$} & \multirow{2}{*}{$(1, \bar{2})$}  
	& \multirow{2}{*}{$2^{-6}$} & \multirow{1}{*}{$-2^{-6}$}
	& \multirow{1}{*}{$u \equiv 1_{(4)}$} & \multirow{1}{*}{0} & \multirow{1}{*}{1} 
	& \multirow{2}{*}{$2^{-6}$}
 \\
 \cline{5-8}
 &&&& $2^{-6}$ & $u \equiv 3_{(4)}$
 & 1 & 0 & \\
\hline

\multirow{4}{*}{11} & \multirow{4}{*}{$\nu = 2b \geq 4$} & \multirow{4}{*}{$(1 :: \bar{2})$}  
	& \multirow{4}{*}{$2^{-3b-3}$} & $-2^{-3b-3}$
	& if $b$ is odd and $u \equiv 1_{(4)}$ & 0 & 2 
	& \multirow{4}{1.8in}{$3 \cdot 2^{-3b-5}$ (II-sign = $+$) or $2^{-3b-5}$ (II-sign = $-$)}
 \\
\cline{5-8}
 & & & & $2^{-3b-3}$ & if $b$ is odd and $u \equiv 3_{(4)}$ & 2 & 0 &
 \\
\cline{5-8}
 & & & & \multirow{1}{*}{$-2^{-3b-4}$}
 	& if $b$ is even and $u \equiv 1_{(4)}$ 
	& \multirow{2}{*}{1} & \multirow{2}{*}{1} &
 \\
 \cline{5-6}
 & & & & $2^{-3b-4}$ & if $b$ is even and $u \equiv 3_{(4)}$ & & &
\\
\hline
\hline
\end{tabular}
$$
\centerline{\bf Table 5.3: Partial Euler Factors when $p=2$}
\end{minipage}
\end{sideways}
}

\newcommand{\TableEulerTwo}
{
\begin{sideways}
\begin{minipage}{\textheight}
$$\hspace{-1in}
\begin{tabular}{  c ||  c  |  c  ||  c   | c  || c  | c | c || c || }
\multirow{2}{*}{\text{\#}}   & Allowed  & \multirow{2}{*}{PLGS} 
	& \multirow{2}{*}{$4 \cdot A^*_{2^\nu}$} &  \multirow{2}{*}{$4 \cdot B^*_{2^\nu}$} 
	& \multirow{1}{*}{Cases determining} &  \multicolumn{2}{c||}{$c_2 =$} 
	& Normalized Densities
\\
& $\nu = \ord_2(\det_G(Q))$ &  &   & 
	& the $c_2$ distribution
	& $+1$ & $-1$ 
    	& $(1- \frac{1}{2^2})\cdot\beta_{2, Q}^{-1}(Q)$ 
\\
\hline 
\hline
\multirow{2}{*}{12} & \multirow{2}{*}{$\nu = 3$} & \multirow{2}{*}{(1, 1, 1)} 
	& \multirow{2}{*}{$2^{-7} \cdot 3$} 
	& \multirow{2}{*}{$0$} & \multirow{2}{*}{--}
	&  \multirow{2}{*}{1} & \multirow{2}{*}{1}
	& \multirow{2}{*}{$2^{-8} \cdot 3$}
	\\
& & & 
	&&&&&
\\
\hline

\multirow{2}{*}{13} & \multirow{2}{*}{$\nu = 4$} & \multirow{2}{*}{(1, 1; 1)} 
	& \multirow{2}{*}{$2^{-8} \cdot 3$} 
	& \multirow{2}{*}{$0$} & \multirow{2}{*}{--}
	& \multirow{2}{*}{2} 
	& \multirow{2}{*}{2}
	& \multirow{2}{*}{$2^{-10} \cdot 3$}
\\
& & & 
	& & 
	&  &  & 
\\
\hline
\multirow{2}{*}{14} & \multirow{2}{*}{$\nu = 5$} & \multirow{2}{*}{(1; 1, 1)} 
	& \multirow{2}{*}{$2^{-10} \cdot 3$} 
	& \multirow{2}{*}{$0$} & \multirow{2}{*}{--}
	&  \multirow{2}{*}{2} & \multirow{2}{*}{2}
	& \multirow{2}{*}{$2^{-12} \cdot 3$}
\\
& & & 
	& & & & &
\\
\hline
\multirow{2}{*}{15} & \multirow{2}{*}{$\nu = 6$} & \multirow{2}{*}{(1; 1; 1)}  
	& \multirow{2}{*}{$2^{-11} \cdot 3$} 
	& \multirow{1}{*}{$-2^{-12} \cdot 3$} & \multirow{1}{*}{if $u\equiv 1_{(4)}$} 
	& \multirow{1}{*}{1} & \multirow{1}{*}{3}
	& \multirow{2}{*}{$2^{-13} \cdot 3$}
\\
\cline{5-8}
 & & & 
	& \multirow{1}{*}{$2^{-12} \cdot 3$} & \multirow{1}{*}{if $u\equiv 3_{(4)}$} 
	& 3 & 1 & 
\\
\hline
\multirow{2}{*}{16} & \multirow{2}{*}{$\nu = c +1 \geq 5$} & \multirow{2}{*}{(1, 1 :: 1)}  
	& \multirow{2}{*}{$2^{-c-5} \cdot 3$} 
	& \multirow{2}{*}{$0$} & \multirow{2}{*}{--}
	& \multirow{2}{*}{4} & \multirow{2}{*}{4}
	& \multirow{2}{*}{$2^{-c-8} \cdot 3$}
 \\
 & & & 
 	& & & & &
\\ 
\hline
\multirow{2}{*}{17} & \multirow{2}{*}{$\nu = 2b+1 \geq 7$} & \multirow{2}{*}{(1 :: 1, 1)} 
	& \multirow{2}{*}{$2^{-3b-4} \cdot 3$} 
	& \multirow{2}{*}{$0$} 
	& \multirow{2}{*}{--} 
	& \multirow{2}{*}{4} & \multirow{2}{*}{4}
	& \multirow{2}{*}{$2^{-3b-7} \cdot 3$}
\\
& & & & & & & &
\\
\hline
\multirow{4}{*}{18} & \multirow{4}{*}{$\nu = 2b+2 \geq 8$} & \multirow{4}{*}{(1 :: 1; 1)}  
	& \multirow{4}{*}{$2^{-3b-5} \cdot 3$} 
	& \multirow{1}{*}{$-2^{-3b-6} \cdot 3$} 
	& \multirow{1}{*}{if $b$ is even and $u \equiv 1_{(4)}$}  
	& \multirow{1}{*}{2} & \multirow{1}{*}{6}
	& \multirow{4}{*}{$2^{-3b-8} \cdot 3$}
	\\
\cline{5-8}
& & & & \multirow{1}{*}{$2^{-3b-6} \cdot 3$} 
	& \multirow{1}{*}{if $b$ is even and $u \equiv 3_{(4)}$}
	& 6 & 2 &
\\
\cline{5-8}
& & & & \multirow{2}{*}{$0$} 
	& \multirow{2}{*}{if $b$ is odd}
	& \multirow{2}{*}{4} & \multirow{2}{*}{4} &
\\
& & & & & 
	& & &
\\
\hline
\multirow{4}{*}{19} & \multirow{4}{*}{$\nu = c+2 \geq 7$} & \multirow{4}{*}{(1; 1 :: 1)}  
	& \multirow{4}{*}{$2^{-c-7} \cdot 3$} 
	& \multirow{1}{*}{$-2^{-c-8} \cdot 3$} 
	& \multirow{1}{*}{if $c$ is even and $u \equiv 1_{(4)}$} 
	& \multirow{1}{*}{2} & \multirow{1}{*}{6}
	& \multirow{4}{*}{$2^{-c-10} \cdot 3$}
\\
\cline{5-8}
& & & & \multirow{1}{*}{$2^{-c-8} \cdot 3$} 
	& \multirow{1}{*}{if $c$ is even and $u \equiv 3_{(4)}$}
	& 6 & 2 &
\\
\cline{5-8}
& & & & \multirow{2}{*}{$0$} 
	& \multirow{2}{*}{if $c$ is odd}
	& \multirow{2}{*}{4} & \multirow{2}{*}{4}
	&
\\
& & & & & 
	& & &
\\
\hline
\multirow{4}{*}{20} & \multirow{4}{1in}{$\nu = b+ c \geq 9$, $c\geq b+3$, $b\geq 3$} & \multirow{4}{*}{(1 :: 1 :: 1)}  
	& \multirow{4}{*}{$2^{-2b-c-3} \cdot 3$} 
	& \multirow{1}{*}{$-2^{-2b-c-4} \cdot 3$} 
	& \multirow{1}{*}{if $b$ and $c$ even and $u \equiv 1_{(4)}$}  
	& \multirow{1}{*}{4} & \multirow{1}{*}{12}
	& \multirow{4}{*}{$2^{-2b-c-7} \cdot 3$}
\\
\cline{5-8}
& & & & \multirow{1}{*}{$2^{-2b-c-4} \cdot 3$}
	& \multirow{1}{*}{if $b$ and $c$ even and $u \equiv 3_{(4)}$} 
	& 12 & 4 &
\\
\cline{5-8}
& & & & \multirow{2}{*}{$0$}
	& \multirow{2}{*}{if $b$ and $c$ not both even}
	& \multirow{2}{*}{8} & \multirow{2}{*}{8}
	&
\\
& & & & & & & &
\\
\hline
%
%
\end{tabular}
$$
\centerline{\bf Table 5.3: Partial Euler Factors when $p=2$ (continued)}
\end{minipage}
\end{sideways}
}

\section{Introduction and Notation}

\subsection{Introduction}

The {\bf mass} of a positive definite integer-valued quadratic form $Q$ is an important local invariant of a genus of quadratic forms that is closely related to the class number of $Q$, defined as the positive rational number 
$$
\Mass(Q) := \sum_{[Q'] \in \Gen(Q)} \frac{1}{|\Aut(Q')|}  
$$
obtained by summing the reciprocals of the sizes of the automorphism groups $\Aut(Q')$ of the ($\Z$-equivalence) classes $[Q']$ in the genus $\Gen(Q)$.  It is known that that the mass of a given quadratic form $\Q$ can be computed as an infinite product of factors called ``local densities'', though the formulas for these local densities are rather involved to evaluate and very prone to errors.  
With an eye towards arithmetic applications to recent ``discriminant-preserving'' correspondences of Bhargava, in \cite{Ha-masses_of_varying_det} we studied the seemingly more tractable problem of how to compute the sum of all masses of quadratic forms of a given (Hessian) determinant squareclass $\det_H(Q) = S$, called the primitive total mass, and to understand how this total mass grows as $S \rightarrow \infty$.  More precisely, if we let $\mathbf{Cls}(S;n)$ 
(resp. $\mathbf{Cls}^*(S;n)$) 
denote the classes of 
(resp. primitive) 
positive definite integer-valued quadratic forms over $\Z$ in $n$ variables, then the {\bf total mass} $\mathrm{TMass}_n(S)$
 and {\bf primitive total mass} $\mathrm{TMass}_n^*(S)$ 
 are 
respectively defined as 
$$
\mathrm{TMass}_n(S)
:= 
\sum_{
	[Q] \in \mathbf{Cls}(S;n)
}
\frac{1}{|\Aut(Q)|}
\qquad \text{ and }\qquad
\mathrm{TMass}^*_n(S) 
:= 
\sum_{
    [Q] \in \mathbf{Cls}^*(S;n)
}
\frac{1}{|\Aut(Q)|}.
$$

In this paper we carry out the local computations described in \cite{Ha-masses_of_varying_det} for ternary quadratic lattices and our main result in the case where $F = \Q$ gives an explicit formula for the 
 associated formal {\bf total mass Dirichlet series}
$$
D_{\mathrm{Mass}; n=3}(s)
 := 
\sum_{N \in \N} 
\frac{\mathrm{TMass}_{n=3}(N)}{N^s}.
$$
%
%
Our main result, stated when $F = \Q$ for simplicity, gives a surprisingly simple  formula for this Dirichlet series.

\begin{thm}[Theorem \ref{Thm:Formal_mass_series_over_F}, 
	Cor \ref{Cor:Formal_mass_series_over_QQ}] \label{Thrm:Main_mass_Theorem}
The total mass Dirichlet series for positive definite integer-valued ternary quadratic forms is given by 
$$
D_{\mathrm{Mass}; n=3}(s)
= 
\frac{1}{48 \cdot 2^s} 
\cdot
\[\zeta(s-1) \zeta(2s-1) -  \zeta(2s-2) \zeta(s)\].
$$
This gives the simple divisor sum formula 
$$
\mathrm{TMass}_{n=3}(S) 
= 
\tfrac{1}{48} 
\sum_{\substack{S/2 = a\cdot b^2 \\ a,b>0}} \(ab - b^2\).
$$
\end{thm}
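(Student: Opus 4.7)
The plan is to apply the Euler-product factorization of $D_{\mathrm{Mass};n=3}(s)$ established in \cite{Ha-masses_of_varying_det}: that work expresses the total mass Dirichlet series as the product of an archimedean contribution (depending only on the signature) and local factors $L_p(s)$ at each finite prime $p$, where each $L_p(s)$ is a generating function in $p^{-s}$ whose coefficients are local masses of ternary $\Z_p$-lattices organized by the $p$-adic valuation and squareclass of their Hessian determinant. So the task reduces to a purely local computation at every $p$ plus a one-time archimedean normalization.

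First I would pin down the archimedean factor for the positive definite signature; this is a compact-group volume computation for $\O(3,\R)$ that should produce the rational constant $\tfrac{1}{48}$. The global prefactor $2^{-s}$ accounts for the fact that the Hessian determinant of an integer-valued ternary form is always even (visible already by reducing the Hessian matrix mod $2$), so every $N$ with $\mathrm{TMass}_{n=3}(N)\neq 0$ is divisible by $2$, and one may normalize the series by $2^{-s}$.

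Second, at each odd prime $p$ I would enumerate ternary $\Z_p$-lattices up to local genus via the Jordan decomposition (each Jordan constituent is determined by its scale, rank and the squareclass of its unit determinant), compute the local mass of each piece from the standard local density formulas, and sum over scales and squareclasses. After simplifying the resulting rational function in $p^{-s}$, the key point is that it should match precisely the local Euler factor of the combination $\zeta(s-1)\zeta(2s-1) - \zeta(2s-2)\zeta(s)$ at the prime $p$. At $p = 2$ the analogous enumeration uses the finer $2$-adic invariants (scale, norm, weight, oddity), producing many more Jordan types; under the splitting hypothesis on $2$ in $F$ (here trivial for $F=\Q$) the local factor $L_2(s)$ should still assemble into the same Euler-factor shape.

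The main obstacle I expect is precisely the $p = 2$ computation: the proliferation of $2$-adic Jordan types and the need to carefully track parity/oddity invariants make the local sum considerably more delicate than at odd primes, and verifying that it still collapses to the expected Euler factor (and contributes the $2^{-s}$ and the minus sign between the two zeta products) is the technical heart. Once the Dirichlet series identity is established, the divisor-sum formula follows by an immediate Dirichlet convolution: writing
$$\zeta(s-1)\zeta(2s-1) = \sum_{N\geq 1} N^{-s}\sum_{N=ab^2} ab, \qquad \zeta(2s-2)\zeta(s) = \sum_{N\geq 1} N^{-s}\sum_{N=ab^2} b^2,$$
and reading off the coefficient of $N^{-s}$ in $\tfrac{1}{48\cdot 2^s}\bigl[\zeta(s-1)\zeta(2s-1) - \zeta(2s-2)\zeta(s)\bigr]$ (so $S = 2N$) yields $\mathrm{TMass}_{n=3}(S) = \tfrac{1}{48}\sum_{S/2 = ab^2}(ab - b^2)$, completing the proof.
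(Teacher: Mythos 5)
Your closing step---reading off the divisor sum by Dirichlet convolution from the two zeta products---is exactly how the paper deduces the second identity from the first, and your instinct that the work reduces to local genus enumerations (Jordan splittings at odd $p$, the finer $2$-adic invariants at $p=2$) plus an archimedean normalization is broadly right. But the structural premise you start from is false: $D_{\mathrm{Mass};n=3}(s)$ does \emph{not} factor as an archimedean constant times a product of local factors $L_p(s)$ over the finite primes. You can see this from the answer itself: a difference of two Euler products such as $\zeta(s-1)\zeta(2s-1)-\zeta(2s-2)\zeta(s)$ is not an Euler product, and concretely $\mathrm{TMass}_{n=3}(2)=0$ while $\mathrm{TMass}_{n=3}(8)=\tfrac{1}{48}\neq 0$, so the coefficient function cannot be multiplicative under any normalization. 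The obstruction is the product formula for Hasse invariants: the genera of a fixed determinant and signature are cut out by the global constraint $\prod_v c_v(Q)=1$, so the local genus data at different primes are not independent, and the total mass summed over all such genera does not split as a product of independent local sums. (Relatedly, the constant $\tfrac1{48}$ is not purely an $\mathrm{O}(3,\R)$ volume; in the paper it is assembled from the archimedean factor $1/(64\pi^2)$ together with the generic density product $2\zeta(2)=\pi^2/3$.)

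What the paper actually uses (Theorem \ref{Thm:Eulerian_decomposition}, from \cite{Ha-masses_of_varying_det}) is that the primitive non-archimedean mass series is $\kappa_3\,(D_{A^*}(s)+\ve_\infty D_{B^*}(s))$, a \emph{sum of two} Eulerian Dirichlet series: roughly, $A^*_\p$ collects the normalized local densities summed over all local Hasse invariants and $B^*_\p$ collects them weighted by $c_\p$, and the product formula converts the global Hasse constraint into this two-term average. The minus sign between the two zeta products then comes from the archimedean Hasse invariant of a definite form (the sign $\ve_\infty$, tracked through $(-1)^{[F:\Q]+\sigma_{v,-}}$ in Theorem \ref{Thm:Explicit_Dirichlet_Series_Formula}), \emph{not} from the $p=2$ Euler factor as you suggest. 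Each of $D_{A^*}$ and $D_{B^*}$ is genuinely Eulerian, and your proposed local enumerations are essentially what the paper carries out for each of them separately (Tables \ref{Tab:odd_p_invariants}--\ref{Tab:odd_p_Euler_info} and Tables 5.1--5.3), followed by a passage from the primitive to the full mass series by summing over scalings (which removes the $\zeta_F(3s)$ denominators). Without the $A^*/B^*$ decomposition your local factors $L_p(s)$ are not even well defined, so the computation as planned cannot be carried out.
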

\noindent
We actually show that similar explicit formulas holds if we consider rank 3 quadratic lattices of any fixed totally definite signature over a totally real number field $F$ where $p=2$ splits completely, and derive this from a similar result for the primitive total mass Dirichlet series whose coefficients are given by $\mathrm{TMass}^*_{n=3}(N)$.

\smallskip
To prove these results, we continue the work of \cite{Ha-masses_of_varying_det} and 
compute the formal Dirichlet series of the 
closely related quantity called the {\bf primitive total non-archimedean mass} $M^*_{n}(S)$ for any (non-archimedean Hessian) determinant squareclass $S$ when $n=3$.
When $n$ is odd that paper shows the following two structural theorems (stated here for positive definite forms where $F = \Q$):
\begin{thm}[{\cite[Cor 4.15, p20]{Ha-masses_of_varying_det}}]   \label{Thm:Eulerian_decomposition}
When $n$ is odd and we consider positive definite forms, the formal Dirichlet series 
$$
D_{M^*; n}(s) := \sum_{S\in\N} \frac{M^*_n(S)}{S^{s}}
$$
can be written as a sum
$$
D_{M^*; n}(s) = \kappa_n \cdot \[ D_{A^*; n}(s) + D_{B^*; n}(s) \] 
$$
of two Eulerian Dirichlet series $D_{A^*; n}(s)$ and $D_{B^*; n}(s)$, with some explicit constant $\kappa_n$.  
\end{thm}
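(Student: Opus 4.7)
The plan is to write $M^*_n(S)$ as a sum over tuples of local genus data of a product of local mass factors, and then use a single $\pm 1$ global reciprocity constraint to split that sum into two pieces, each of which factors over primes.

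First, I would unfold the definition of the primitive total non-archimedean mass from \cite{Ha-masses_of_varying_det}. At each finite prime $\p$, a rank-$n$ quadratic lattice over $\Q_\p$ of local determinant squareclass $S_\p$ admits only finitely many local genus symbols $\sigma_\p$, and each contributes a local mass factor $m^*_{n, \p}(S_\p, \sigma_\p)$. A global genus is assembled by specifying such a $\sigma_\p$ at each finite prime (with $\sigma_\p$ unramified outside a finite set) together with the fixed positive-definite archimedean type. This gives the preliminary expression
$$
M^*_n(S) = c_\infty(n) \sum_{(\sigma_\p)_\p} \mathbf{1}\bigl[(\sigma_\p)\text{ globally realizable}\bigr] \prod_\p m^*_{n, \p}(S_\p, \sigma_\p),
$$
where $c_\infty(n)$ is the common archimedean factor and the sum is over local tuples with local determinant squareclass $S_\p$ at every $\p$.

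Second, I would identify the global realizability constraint. For odd $n$, Hasse--Minkowski implies that a compatible local tuple descends from a global form if and only if the Hasse invariants $\epsilon_v(\sigma_v)$ satisfy Hilbert reciprocity $\prod_v \epsilon_v = +1$. Since the signature, and hence $\epsilon_\infty$, is fixed, this is the single global condition $\prod_\p \epsilon_\p(\sigma_\p) = \epsilon_\infty$. The oddness of $n$ is essential here: for even $n$ the discriminant squareclass interacts nontrivially with the Hasse invariants and the constraint is no longer a clean single $\pm 1$ condition. I would then linearize the indicator via
$$
\mathbf{1}\Bigl[\prod_\p \epsilon_\p(\sigma_\p) = \epsilon_\infty\Bigr] = \tfrac{1}{2}\Bigl(1 + \epsilon_\infty \prod_\p \epsilon_\p(\sigma_\p)\Bigr)
$$
and distribute this into the outer sum, obtaining $M^*_n(S) = \kappa_n\bigl(A^*_n(S) + B^*_n(S)\bigr)$ with $\kappa_n := c_\infty(n)/2$, where
$$
A^*_n(S) = \prod_\p \Bigl(\sum_{\sigma_\p} m^*_{n, \p}(S_\p, \sigma_\p)\Bigr), \qquad B^*_n(S) = \epsilon_\infty \prod_\p \Bigl(\sum_{\sigma_\p} \epsilon_\p(\sigma_\p)\, m^*_{n, \p}(S_\p, \sigma_\p)\Bigr).
$$
Both $A^*_n$ and $B^*_n$ are manifestly multiplicative functions of $S$, so their Dirichlet series admit Euler products, which yields the claimed Eulerian decomposition $D_{M^*;n}(s) = \kappa_n\,[D_{A^*;n}(s) + D_{B^*;n}(s)]$.

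The main obstacle is the dyadic prime: I must verify that the local Hasse invariant $\epsilon_2$ extends to a well-defined function on the dyadic local genus symbols used in \cite{Ha-masses_of_varying_det}, so that the twisted local sum $\sum_{\sigma_2} \epsilon_2(\sigma_2)\, m^*_{n, 2}(S_2, \sigma_2)$ is intrinsically defined and still factors off the rest of the product. Once this is settled, the remaining checks are routine: that the squareclass decomposition of $S$ matches the product of local squareclasses $S_\p$, and that each local weighted sum depends only on $S_\p$ (hence is multiplicative), which follows from the purely local nature of both $m^*_{n,\p}$ and $\epsilon_\p$.
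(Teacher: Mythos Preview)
The paper does not prove this theorem; it is quoted from the companion paper \cite{Ha-masses_of_varying_det} (Cor.~4.15 there), so there is no in-paper argument to compare against directly. That said, your sketch is exactly the mechanism underlying that result: the global existence constraint on a tuple of local genera is precisely Hilbert reciprocity $\prod_v c_v = 1$, and linearizing the indicator $\tfrac{1}{2}(1 + \ve_\infty \prod_\p c_\p)$ splits the sum into an unweighted and a Hasse-weighted piece, each of which factors over primes. This is confirmed by how the result is invoked later in the paper (proof of Theorem~\ref{Thm:Formal_mass_series_over_F}), where one sees $D_{M^*} = \tfrac{1}{2}\,\beta_{n,\mathbf{f}}^{-1}\cdot[D_{A^*} + \ve_\infty D_{B^*}]$.

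Two small corrections to your write-up. First, $M^*_n(S)$ is the \emph{non-archimedean} total mass, so there is no archimedean factor $c_\infty(n)$ in its definition; the archimedean place enters only through the sign $\ve_\infty = c_\infty(\sigma)$ forced by reciprocity. The constant $\kappa_n$ in the paper is not $c_\infty/2$ but $\tfrac{1}{2}\,\beta_{n,\mathbf{f}}^{-1}(\widetilde S)$: the local densities are normalized by the generic (unimodular) factor $(1-q^{-2})$ so that the Euler factors equal $1$ at almost all primes, and that infinite product is absorbed into $\kappa_n$ (this is why $\kappa_3 = 2\zeta(2)$ rather than $1/2$). Second, your ``main obstacle'' at $p=2$ is not an obstacle: the Hasse invariant is a \emph{rational} invariant of the underlying $\Q_2$-form, hence is automatically well-defined on any local genus, dyadic or not; no extension argument is needed.
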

\begin{thm}[{\cite[Cor 5.5, p22]{Ha-masses_of_varying_det}}]
When $n$ is odd, the Euler factors at $p$ in the Dirichlet series $D_{A^*; n}(s)$ and $D_{B^*; n}(s)$ above are each 
rational functions in $p^{-s}$.
\end{thm}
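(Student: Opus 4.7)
The plan is to reduce, at each finite place $\p$ of $F$, the Euler factor of $D_{A^*;n}(s)$ (and similarly $D_{B^*;n}(s)$) to a finite sum of geometric series in $p^{-s}$, using the Jordan-block classification of integral quadratic lattices over $\Op$ together with the closed-form local mass formulas recalled in \cite{Ha-masses_of_varying_det}. Since Theorem \ref{Thm:Eulerian_decomposition} already supplies the Euler product structure, the argument is purely local at each $\p$.

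First, I would parametrize the local genera of rank-$n$ lattices over $\Op$ by their Jordan decomposition $L = \perp_i L_i$, where $L_i$ has scale $\pip^i$. The datum is a partition of $n$ into the Jordan ranks $n_i$ together with finitely many discrete invariants per block (unit squareclasses and, at primes above $2$, extra dyadic data), all of which remain tractable under the running hypothesis that $p=2$ splits completely in $F$ so that every dyadic local field is $\Q_2$. The determinant valuation of $L$ is then $\sum_i i \cdot n_i$ up to a fixed convention offset.

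Next, I would substitute the explicit local mass into each term. Each contribution factors as a rational function of $p$ depending only on the shape, times a monomial in $p$ tracking the determinant. Summing over all local genus invariants of rank $n$ breaks into a finite sum indexed by partitions of $n$ and by the discrete block invariants; within each shape class the uniform scale shift $L_i \mapsto L_{i+1}$ multiplies $\det L$ by $\pip^n$ and multiplies the local mass by an explicit power of $p$, so this portion of the sum is a single geometric series in $p^{-s}$. The oddness of $n$ enters here to ensure that the scale-shift action on the local mass is a clean multiplicative factor and does not produce a parity obstruction tied to the Hasse invariant that would spoil geometric summation.

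Finally, the primitivity cuts defining $A^*$ and $B^*$ restrict which Jordan shapes contribute and which scale shifts are allowed; in both cases this leaves a finite $\Q$-linear combination of geometric series in $p^{-s}$, hence a rational function. I expect the main obstacle to be the dyadic bookkeeping at $\p \mid 2$: although the splitting hypothesis reduces $F_\p$ to $\Q_2$, the additional Jordan invariants there must be enumerated precisely so that the shape-plus-shift sum actually collapses to a rational function and so that the primitivity condition interacts correctly with the dyadic Jordan data.
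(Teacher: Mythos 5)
Your overall strategy --- classify local genera by Jordan block structure, substitute the closed-form local mass, and sum geometric series in the determinant valuation --- is exactly how this paper realizes the statement in the case $n=3$ (the statement itself is imported from the companion paper; here it is made explicit in Lemmas \ref{Lem:fixed_u_odd_A-series}--\ref{Lem:Explicit_Odd_Euler_factors} for $\p\nmid 2$ and Lemmas \ref{Lem:A_u_even_prime_formula}--\ref{Lem:Computing_even_euler_factors} for $\p\mid 2$). However, one step fails as you have written it. For a fixed Jordan shape $(n_1,\dots,n_r)$ with $r\geq 2$, the set of admissible scale vectors $0\leq \alpha_1<\cdots<\alpha_r$ is \emph{not} a single orbit under the uniform shift $L_i\mapsto L_{i+1}$, so the contribution of a shape class is not ``a single geometric series'': there are infinitely many shift orbits within one shape, indexed by the gaps. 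The correct move is to sum over the independent parameters $\alpha_1\geq 0$ and $\alpha_{i+1}-\alpha_i\geq 1$, with respect to which the determinant monomial and the mass factor are multiplicative; this yields a \emph{product} of $r$ geometric series (hence still a rational function, after partial fractions). The paper carries this out via the Ferrer-diagram identity (\ref{Eq:Ferrer_double_sum}), e.g.\ $\sum_{0\leq a\leq b} q^{-(2a+b)}X^{a+b} = (1-q^{-1}X)^{-1}(1-q^{-3}X^2)^{-1}$ for the shape $(1,1,1)$. Without this extra decomposition your ``finite sum of geometric series'' does not account for all valuation vectors.

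Second, your sketch treats $A^*$ and $B^*$ symmetrically, but $B^*_\p(S)$ is the sum of normalized densities weighted by the Hasse invariant $c_\p$, so its rationality additionally requires computing the distribution of $c_\p$ over the genera of fixed determinant within each shape (Lemma \ref{lemma:Hasse} and Table \ref{Tab:odd_p_invariants}); this produces cancellation (the contribution $0$ in case 4b) and twists by $\leg{-1}{\p}$ that must be shown to be periodic in the valuation for the geometric summation to survive. Relatedly, your explanation of where oddness of $n$ enters is off target: it is not needed to make the scale shift ``clean,'' but rather for the Eulerian decomposition of Theorem \ref{Thm:Eulerian_decomposition} to exist in the first place; once $A^*_\p$ and $B^*_\p$ are defined, rationality is exactly the finite-shape, gap-parameter, Hasse-distribution bookkeeping above.
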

\noindent 
That paper also explicitly computes these Euler factors when $n=2$ under the assumption that $p=2$ splits completely in $F$. 
%
%
%
In this paper we continue this work to cover the case when $n=3$, by computing the Euler factors of $D_{A^*; n=3}(s)$ and $D_{B^*; n=3}(s)$ at all primes $\p$ of any number field $F$ where $p=2$ splits completely.  
The $n=3$ case is necessary for understanding the 2-parts of class groups of $n$-monogenic cubic fields
which will be described in \cite{Bh-Ha-Sh}.
We prove these results by using the theory of local genus invariants to enumerate the contributions of each local genus to these Euler factors by grouping the local genera according to their Jordan block structure (when $\p\nmid 2$) or the partial local genus symbol (when $\p\mid2$ and $F_\p = \Q_2$) as described in \cite[Defn 5.1, p22]{Ha-masses_of_varying_det}.  This enumeration is particularly difficult when $\p\mid 2$, as here there are 20 different cases to consider and also the theory of local genera for these primes is more involved (e.g. compare Tables \ref{Tab:odd_p_invariants} and \ref{Tab:odd_p_Euler_info} with Tables 5.1-5.3 in Section \ref{Sec:p=2_tables}).  
Our second main theorem, stated for simplicity when $F=\Q$, states that 
\begin{thm}[Theorem \ref{Thm:Explicit_Dirichlet_Series_Formula}, 
	Cor \ref{Cor:Explicit_Dirichlet_Series_Formula_over_QQ}] 
	\label{Thrm:Main_AB_Theorem}
Suppose that $F=\Q$, $n=3$ and we consider positive definite quadratic forms, then we have $\kappa_3 = 2\zeta(2) = \frac{\pi^2}{3}$,
$$
D_{A^*; n=3}(s) = \frac{ \zeta(s+1) \zeta(2s+3)}{2^{s} \cdot \zeta(3s+6)},
\qquad\text{ and }\qquad
D_{B^*; n=3}(s) 
= \frac{(-1) \cdot  \zeta(2s+2) \zeta(s+2)}{2^{s} \cdot \zeta(3s+6)}.
$$
\end{thm}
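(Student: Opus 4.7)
The plan is to verify the closed form by identifying the Euler factor of $D_{A^*;3}(s)$ and $D_{B^*;3}(s)$ at each prime $p$ of $\Q$, taking advantage of the Eulerian decomposition recorded in Theorem \ref{Thm:Eulerian_decomposition}. It then suffices to show that at each odd prime $p$ the local factor matches
$$
A^*_p(p^{-s}) = \frac{1 - p^{-(3s+6)}}{(1-p^{-(s+1)})(1-p^{-(2s+3)})},
$$
with an analogous expression for $B^*_p$, and that the Euler factors at $p=2$ carry an extra $2^{-s}$ (which accounts for the scaling between Gram and Hessian determinants for integer-valued ternary forms). The prefactor $\kappa_3 = 2\zeta(2) = \pi^2/3$ should be extracted from the archimedean local mass for positive definite rank-3 lattices together with the global normalization already built into \cite{Ha-masses_of_varying_det}.

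At each odd prime $\p$, I would enumerate the local genera of rank-3 $\Op$-lattices by Jordan type, recording for each type its determinant squareclass and its contribution to $A^*_p$ and $B^*_p$; Tables \ref{Tab:odd_p_invariants} and \ref{Tab:odd_p_Euler_info} package exactly this data. Summing over Jordan types, the three scale parameters produce three independent geometric series in $p^{-(s+1)}$, $p^{-(2s+3)}$, and $p^{-(3s+6)}$, whose product telescopes to the stated rational function after pulling out the $1 - p^{-(3s+6)}$ in the numerator. The difference between the $A^*$ and $B^*$ factors should arise from a quadratic character twist that partitions the Jordan types into two subfamilies; it is precisely this twist that converts $\zeta(s+1)\zeta(2s+3)$ into $\zeta(2s+2)\zeta(s+2)$ and supplies the overall sign flip in $D_{B^*;3}$.

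The more delicate half of the argument is at $p=2$, where the splitting hypothesis forces $F_\p = \Q_2$. Here the classification must be done using the partial local genus symbols of \cite[Defn 5.1]{Ha-masses_of_varying_det}; as the introduction flags, this produces $20$ cases rather than the handful needed at odd primes, because the $2$-adic theory must also track octane invariants, train/compartment structure, and parity conditions. For each of the $20$ symbols I would use Tables 5.1–5.3 in Section \ref{Sec:p=2_tables} to read off the $2$-adic determinant squareclass, the local mass, and the split into $A^*_2$ and $B^*_2$ contributions. After assembling and collapsing the resulting multi-parameter geometric series, the $2$-adic Euler factor should acquire the same rational shape as at odd primes, up to the additional $2^{-s}$ recorded out front in the displayed formulas.

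The main obstacle is exactly this bookkeeping at $p=2$: correctly tracking all $20$ families simultaneously, keeping the $A^*/B^*$ split consistent, and confirming that the $2$-adic sum collapses to the same clean shape as the odd-prime factor. A natural independent check, and the one actually used in this paper, is the software of \cite{Ha-Sage-Mass}, which numerically verifies the resulting Dirichlet series coefficients against tabulated genera of positive definite ternary forms with Hessian determinant up to $2 \times 10^4$.
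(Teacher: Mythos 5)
Your proposal follows essentially the same route as the paper: Euler factors at odd primes from the Jordan-type enumeration packaged in Tables \ref{Tab:odd_p_invariants}--\ref{Tab:odd_p_Euler_info}, Euler factors at $p=2$ from the twenty partial local genus symbols and Tables 5.1--5.3, and assembly via the Eulerian decomposition of Theorem \ref{Thm:Eulerian_decomposition}, with the numerical check against tabulated forms as a safeguard. One small correction of attribution: the constant $\kappa_3 = 2\zeta(2)$ arises as half the \emph{non-archimedean} generic density product $\beta_{n=3, \mathbf{f}}^{-1}(\widetilde{S}) = 4\cdot\zeta(2)$, computed from the unimodular cases of the local tables, rather than from the archimedean local mass, which enters separately when converting $D_{M^*}$ into the mass Dirichlet series.
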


\noindent
This gives an explicit formula for $D_{M^*; n=3}(s)$ by Theorem \ref{Thm:Eulerian_decomposition}.
To describe the Euler factors  for $D_{A^*; n=3}(s)$ and $D_{B^*; n=3}(s)$ we first compute the simpler local Dirichlet series  described in \cite[Thrm 5.4, p22]{Ha-masses_of_varying_det} where we vary the Hessian determinant squareclass $S_\p$ but fix its normalized squareclass $\widetilde{S_\p}$.  From these results we then derive the desired Euler factors by analyzing how this normalized squareclass $\widetilde{S_\p}$ varies within our chosen family of distinguished squareclasses.

\smallskip
Finally, we remark on how the formulas in our main theorems can be numerically verified in the case of positive definite forms when $F=\Q$, and the specialized software \cite{Ha-Sage-Dirichlet, Ha-Sage-Tables, Ha-Sage-Mass} we  developed to perform this verification for $\Z$-valued positive definite quadratic forms of Hessian determinant $\leq 20,000$.

\medskip
{\bf Acknowledgements: }
The author would like to thank Manjul Bhargava for posing a question that led to this work, and MSRI for their hospitality during their Spring 2011 semester in Arithmetic Statistics.  The author would also like to warmly thank Robert Varley for his continuing interest in this work, and for several helpful conversations.
This work was completed at the University of Georgia between December 2009 and Summer 2011, and was partially supported by the NSF Grant DMS-0603976.

\subsection{Notation}
Our notation is consistent with that of \cite{Ha-masses_of_varying_det}, and any definitions not explicitly given here can be found there.  For convenience, we now recall some of the relevant definitions and notation there.
Throughout this paper we let $\Z := \{\cdots, -2, -1, 0, 1, 2, \cdots\}$ denote the integers,  $\Q$ the rational numbers, $\R$ the real numbers, $\C$ the complex numbers, and $\N$ the natural numbers (i.e. positive integers).   We also denote the units (i.e. invertible elements) of a ring $R$ by $R^\times$.

\smallskip
{\bf Number Fields:}
We let $F$ denote a number field with ring of integers $\OF$, $v$ is a place of $F$, $\p$ is a prime ideal (or more simply, a {\bf prime}) of $F$, $F_v$ is the completion of $F$ at $v$, $\Ov$ is the ring of integers of $F_v$ (which is $F_v$ itself when $v$ is archimedean).  
We denote by $\infty$ the archimedean place of the rational numbers $\Q$, and write $v\mid\infty$ to denote that $v$ in archimedean.  We also identify the conjugate embeddings $v$ where $F_v = \C$. We denote the set of non-archimedean (finite) places (i.e. primes $\p$) of $F$ by $\mathbf{f}$.  
For any finite set $\T \subset \mathbf{f}$ we let $I^{\T}(\OF)$ denote the set of invertible (integral) ideals of $\OF$, relatively prime to all $\p\in\T$.  We also adopt the general convention that quantities denoted by Fraktur letters (e.g. $\p, \mathfrak{a}, \n$, etc.) will be (possibly fractional) ideals of $F$.

We denote by $\A_{F, \mathbf{f}}^\times := \prod'_{\p \in \mathbf{f}} F_\p^\times$ the non-archimedean ideles of $F$, where the restricted direct product $\prod'$ requires that all but finitely many components lie in $\Op^\times$.  For convenience we will often write products $\prod_{\p\in\mathbf{f}}$ more simply as unquantified products $\prod_\p$, and similarly write $\prod_v$ for a product over all places $v$ of $F$.

\smallskip
{\bf Squareclasses:}
We let $\SqCl(\Op^\times):= (\Op^\times)/(\Op^\times)^2$ and $\SqFpInt := (F_\p^\times)/(\Op^\times)^2$ denote the local integral and local unit squareclasses at the prime $\p$ of $F$. 
We define the {\bf valuation at $\p$} of a local integral squareclass $S$ by the expression $\ord_\p(S) =  \ord_\p(S) := \ord_\p(\al)$ when  $S = \al(\Op^\times)^2$ for some $\al \in F_\p^\times$, and define the {\bf (local) ideal} $\I(S) := \p^{\ord_\p(S)}$ associated to $S$.

\smallskip
{\bf Quadratic Forms:}
When $\p\mid 2$ and $F_\p = \Q_2$ we will use the terminology {\bf train}, {\bf compartments}, {\bf oddity}, and {\bf sign} describing the local genus symbol as defined in \cite[Ch 15, \textsection7.3-7.5, pp380-382]{CS-book}.
We define the {\bf overall sign} of a local genus symbol at $\p \mid 2$ where $F_\p = \Q_2$ as the product of the signs of every train.
We denote by $\mathbf{Cls}^*(S, \vec{\sigma}_\infty, \c_{\S}; n)$ the set of $\OF$-equivalence classes $[Q]$ of primitive $\OF$-valued rank $n$ quadratic $\OF$-lattices  with fixed signature $\sigma_v(Q) = (\vec{\sigma}_\infty)_v$ at all places $v \mid \infty$, fixed Hasse invariants $c_\p(Q) = (\c_\S)_\p$ at the finitely many primes $\p\in\S$, and Hessian determinant $\det_H(G) = S$.  We also denote the {\bf genus} of $Q$ by $\Gen(Q)$.

We often refer to the $n=3$ case of various quantities defined in \cite{Ha-masses_of_varying_det}, and we frequently omit the $n=3$ subscript to simplify our notation (e.g. $A_\p^*:= A^*_{\p;n=3}$, etc.).


\bigskip



\section{Local ternary computations at primes $\p\nmid 2$}\label{Section:p_odd}

In this section we perform the local computations needed to compute the local Euler factors $A^*_\p(S)$ and $B^*_\p(S)$ of Theorem \ref{Thm:Eulerian_decomposition}, computing the relevant Hasse invariants $c_\p$ and associated normalized (reciprocal) local densities $(1- \frac{1}{q^2})\cdot\beta_{Q, \p}(Q)^{-1}$.
To do this, we classify local genera first in terms of their Jordan block structures, and then by the allowed choices of units giving rise to canonical local genus representatives.  

For convenience, when $\p\nmid 2$  we define $\ve_i := \leg{u_i}{\p}$ for $1\leq i \leq 3$ and let $\ve' := \leg{-1}{\p} = (-1)^\frac{q-1}{2}$.  We also take $\pi_\p \in \SqFpInt$ to be a fixed uniformizing squareclass (i.e. where $\ord_\p(\pip) = 1$).


\begin{lem}  \label{lemma:tuple}
For primes $\p\nmid2$, the $\Op$-inequivalent integer-valued (non-degenerate) primitive ternary quadratic forms $Q_\p$ are in bijection with the tuples $T_\p := (\p; a, b; u_1, u_2, u_3)$ where $a,b\in \Z$ with $0\leq a\leq b$,  and the $u_i \in \Op^\times/(\Op^\times)^2$ are freely chosen under the constraints:
\begin{align}
&0= a = b \implies u_1 = u_2 = 1, \\
&0 = a \neq b  \implies u_1 = 1, \\
&0 \neq a = b \implies u_2 = 1.
\end{align}
Those $Q_\p$ with Gram determinant squareclass $S \in \SqFpInt$ where $S = \pip^\nu u$ with $\nu := \ord_\p(S)$ and $u\in \Op^\times$ correspond exactly to those tuples $T_\p$ as above for which $a+b=\nu$ and $u_1 u_2 u_3 = u$.
\end{lem}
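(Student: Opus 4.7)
The plan is to combine the standard Jordan decomposition theorem for quadratic $\Op$-lattices at odd primes with the classification of non-degenerate unimodular quadratic forms over $\Op$ by their determinant squareclass, and to read off the tuple $(\p; a,b; u_1, u_2, u_3)$ as the resulting canonical invariant.

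First I would invoke the existence and essential uniqueness of a Jordan splitting: any non-degenerate ternary $\Op$-lattice admits an orthogonal decomposition into $\p^{e_i}$-modular sublattices with a well-defined multiset of exponents $e_0 \leq e_1 \leq e_2$ and well-defined isometry classes for each scaled component. Since $\p\nmid 2$, every modular block diagonalizes, so we may write
$$
Q_\p \cong \langle u_1 \rangle \perp \langle \pip^a u_2 \rangle \perp \langle \pip^b u_3 \rangle
$$
with each $u_i \in \Op^\times$ taken modulo squares and $0 = e_0 \leq a := e_1 \leq b := e_2$; primitivity of $Q_\p$ forces $e_0 = 0$. The Gram determinant of this diagonal form is then $\pip^{a+b} u_1 u_2 u_3$, which matches $S = \pip^\nu u$ precisely when $a+b = \nu$ and $u_1 u_2 u_3 = u$ in $\SqFpInt$; this yields the final assertion of the lemma.

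Next I would treat the three degenerate cases where Jordan exponents coincide and the splitting of a single modular block into rank-$1$ pieces is not canonical. The key input here is the classical fact that for $\p\nmid 2$ the isometry class of a non-degenerate unimodular quadratic form over $\Op$ is determined by its rank together with its determinant squareclass (via Witt cancellation, or a direct hyperbolic-plane splitting argument). Applying this case by case: when $a=b=0$ all three blocks merge into one rank-$3$ unimodular form whose class depends only on $u_1 u_2 u_3$, so I may normalize $u_1 = u_2 = 1$; when $0 = a < b$ the first two blocks merge into a rank-$2$ unimodular form with determinant $u_1 u_2$, so I may normalize $u_1 = 1$; when $0 < a = b$ the upper two blocks merge (after rescaling by $\pip^{-a}$) into a rank-$2$ unimodular form with determinant $u_2 u_3$, so I may normalize $u_2 = 1$; and when $0 < a < b$ all three blocks have distinct scales and each $u_i$ remains an honest isometry invariant.

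Finally I would verify injectivity of the tuple parametrization: from a given local ternary form, the exponents $(a,b)$ are recovered uniquely from the Jordan multiset, and the normalized unit coordinates are recovered as the determinant squareclasses of the appropriately scaled merged unimodular pieces, which are isometry invariants. Surjectivity onto primitive integer-valued forms is clear from the construction above. The main obstacle is the careful case-by-case bookkeeping needed to verify that exactly the listed unit coordinates can be normalized and that the remaining ones genuinely distinguish isometry classes; this reduces to the classification of unimodular forms at odd primes and is otherwise routine.
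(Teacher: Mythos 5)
Your proposal is correct and follows essentially the same route as the paper: both rest on the Jordan decomposition at odd primes together with the fact that a unimodular form over $\Op$ ($\p\nmid 2$) is determined by its rank and determinant squareclass, which justifies the normalization $Q_\p \sim_{\Op} u_1x^2 + \pi_\p^a u_2 y^2 + \pi_\p^b u_3 z^2$ and the stated constraints on the $u_i$. Your write-up simply spells out the case-by-case normalization and the injectivity check that the paper leaves implicit in its citation of O'Meara.
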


\begin{proof}
This is the Jordan decomposition for quadratic forms over $\Op$ \cite[\textsection92:1-2, pp246-7]{OM}, 
which says that for $\p\nmid 2$ any quadratic form $Q_\p$ can be diagonalized over $\Op$, and that two such diagonal forms are isomorphic if their corresponding $\p$-power scale Jordan components have the same determinant squareclass and dimension for all $\p$-powers.  Here we arrange the variables in increasing $\p$-power order, and note that our primitivity assumption means that the first variable coefficient is not divisible by $\p$.  Our bijection is given by the explicit normalization $Q_\p \sim_{\Op} u_1x^2 + \pip^a u_2 y^2 + \pip^b u_3 z^2$.
\end{proof}

\begin{lem} \label{lemma:Hasse}
The Hasse invariant $c_\p$ (defined by $c_\p := \prod_{i<j} (a_i, a_j)_\p$ for $\sum_i a_i x_i^2$) of a genus of ternary quadratic forms associated to the tuple $T_\p$ at a prime $\p\nmid 2$ is given by
$$
c_\p = \leg{u_1}{\p}^{a+b} \leg{u_2}{\p}^{b} \leg{u_3}{\p}^{a} \leg{-1}{\p}^{ab}
= \ve_1^{a+b} \cdot \ve_2^{b} \cdot \ve_3^{a} \cdot (\ve')^{ab}.
$$
In particular, we have
\begin{align}
0 = a = b  &\quad\implies\quad  c_\p = 1, \\
0 = a < b  &\quad\implies\quad  c_\p = \ve_2^b, \\
0 < a = b  &\quad\implies\quad  c_\p = \leg{-1}{\p}^b \ve_3^b.
\end{align}
\end{lem}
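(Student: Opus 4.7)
The plan is to apply the standard closed-form formula for the Hilbert symbol at a non-dyadic prime to the three pairs of coefficients of the normalized diagonal form
$$
Q_\p \;\sim_{\Op}\; u_1 x^2 + \pi_\p^a u_2 y^2 + \pi_\p^b u_3 z^2
$$
provided by Lemma \ref{lemma:tuple}, then simplify. Recall that for $\p \nmid 2$ and $x = \pi_\p^\alpha u$, $y = \pi_\p^\beta v$ with $u, v \in \Op^\times$, one has
$$
(x, y)_\p \;=\; (\ve')^{\alpha\beta} \cdot \leg{u}{\p}^{\beta} \cdot \leg{v}{\p}^{\alpha},
$$
where $\ve' = (-1)^{(q-1)/2} = \leg{-1}{\p}$.

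First I would plug $(\alpha,\beta) = (0,a)$, $(0,b)$, and $(a,b)$ into the three pairs $(u_1, \pi_\p^a u_2)$, $(u_1, \pi_\p^b u_3)$, $(\pi_\p^a u_2, \pi_\p^b u_3)$. The first two Hilbert symbols contribute no $\ve'$ factor (one valuation is zero) and reduce to $\ve_1^a$ and $\ve_1^b$ respectively, while the third gives $(\ve')^{ab}\,\ve_2^b\,\ve_3^a$. Taking the product yields the claimed general formula $c_\p = \ve_1^{a+b} \ve_2^b \ve_3^a (\ve')^{ab}$.

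The three special cases then follow by substituting the normalization constraints from Lemma \ref{lemma:tuple}. The case $0 = a = b$ is immediate since every exponent vanishes. For $0 = a < b$ the constraint forces $u_1 = 1$, so $\ve_1 = 1$ and only $\ve_2^b$ survives. For $0 < a = b$ the constraint forces $u_2 = 1$, so $\ve_2 = 1$, and using $(\ve')^{b^2} = (\ve')^b$ (valid since $\ve' \in \{\pm 1\}$) together with $\ve_1^{2b} = 1$ leaves $c_\p = (\ve')^b \ve_3^b = \leg{-1}{\p}^b \ve_3^b$.

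There is no substantive obstacle: once the non-dyadic Hilbert symbol formula is in hand, the argument is entirely bookkeeping. The only algebraic subtlety is the reduction $(\ve')^{b^2} = (\ve')^b$ in the third case, and verifying that the sign conventions match the formula in the statement.
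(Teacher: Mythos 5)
Your proposal is correct and follows essentially the same route as the paper: the paper expands $\prod_{i<j}(a_i,a_j)_\p$ by bimultiplicativity, cancels the unit--unit symbols $(u_i,u_j)_\p=1$, and collects the remaining factors $(u_i,\pip)_\p=\leg{u_i}{\p}$ and $(\pip,\pip)_\p=\leg{-1}{\p}$, which is exactly the content of the closed-form tame Hilbert symbol formula you invoke. The handling of the special cases via the normalization constraints of Lemma \ref{lemma:tuple} and the parity reduction $(\ve')^{b^2}=(\ve')^b$ matches what the paper's tabulated conclusions require.
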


\begin{proof}
From the definition of $c_\p$ as a product of Hilbert symbols, we have 
\begin{align*}
c_\p 
&=  \prod_{i<j} (a_i, a_j)_\p \\
&= (u_1, \pip^a u_2)_\p \, (u_1, \pip^b u_3)_\p \, (\pip^a u_2, \pip^b u_3)_\p \\
&= \cancel{(u_1, u_2)_\p} \, (u_1, \pip^a)_\p \, \cancel{(u_1, u_3)_\p} \, (u_1, \pip^b)_\p 
\, (\pip^a u_2, \pip^b u_3)_\p \\
&=  (u_1, \pip^{a+b})_\p \, 
(\pip^a u_2, \pip^b u_3)_\p\ \\
&=  (u_1, \pip^{a+b})_\p \,  
\cancel{(u_2, u_3)_\p} \, (u_2, p^b)_\p \, 
(\pip^a, u_3)_\p \, (\pip^a, \pip^b)_\p \\
&=  (u_1, \pip)_\p^{ab} \,  
(u_2, \pip)_\p^b \, 
(u_3, \pip)_\p^a \, (\pip, \pip)_\p^{ab} \\
&=  \leg{u_1}{\p}^{a+b} 
\leg{u_2}{\p}^b 
\leg{u_3}{\p}^a  \leg{-1}{\p}^{ab}.
\end{align*}
\end{proof}

\begin{lem}[Local Mass Formula] \label{lemma:local_mass_formula}
Given a Jordan decomposition $Q \cong_{\Op} \oplus_{i=1}^r \pip^{\al_i} Q_i$ as  a sum of unimodular $\Op$-valued quadratic forms $Q_i$ where $n_i := \dim(Q_i)$ and the $\al_i$ are non-decreasing with $i$, we have the formula
$$
\beta_{Q, \p}(Q)^{-1} 
	= q^{2\cdot \ord_\p(\det_G(Q))} \cdot 
	2\prod_i M_\p(Q_i) \cdot \sum_{i < j} q^\frac{(\al_j - \al_i) \cdot n_i \cdot n_j}{2},
$$
where 
$$
M_\p(Q_i) := 
\begin{cases}
1 &\qquad \text{if $n_i=0$,} \\
\frac{1}{2} &\qquad \text{if $n_i=1$,} \\
\frac{1}{2\(1 -  \frac{\chi_u(\p)}{q}\)} &\qquad \text{if $n_i= 2$,} \\
\frac{1}{2\(1 - \frac{1}{q^2}\)} &\qquad \text{if $n_i= 3$,} \\
\end{cases}
$$
and $\chi_u(\p) := \leg{-u}{\p}$.
\end{lem}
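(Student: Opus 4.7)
The plan is to derive the formula by evaluating the local density $\beta_{Q,\p}(Q)$ for a Jordan-decomposed $Q$ at an odd prime via the standard Hensel-lifting approach, then expressing the answer in the normalization adopted in \cite{Ha-masses_of_varying_det}.

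First, I would recall the interpretation of $\beta_{Q,\p}(Q)^{-1}$ (up to the explicit $q^{2\ord_\p(\det_G(Q))}$ normalization fixed in the paper) as a properly rescaled count of $\Op$-automorphisms of the reduction of $Q$ modulo a sufficiently large power of $\p$. Since $\p\nmid 2$, Hensel's lemma reduces the computation to counting $\F_q$-rational automorphisms of the Jordan-decomposed form modulo the smallest power of $\p$ at which the Jordan components become visible; the smooth fiber then supplies the remaining density.

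Second, I would exploit the Jordan-decomposed structure to factor the automorphism count into diagonal and off-diagonal contributions. Each diagonal contribution reduces to counting $\F_q$-points of the orthogonal group $\O(Q_i)$, using the classical formulas $|\O_1(\F_q)| = 2$, $|\O_2(\F_q)| = 2(q - \chi_u(\p))$ with $\chi_u(\p) = \leg{-u}{\p}$ distinguishing the split from the non-split two-dimensional unit form, and $|\O_3(\F_q)| = 2q(q^2-1)$. Normalizing each group order by the $\F_q$-volume $q^{\binom{n_i}{2}}$ of the orthogonal variety produces exactly the piecewise definition of $M_\p(Q_i)$, and the overall factor of $2$ in the lemma arises from the residual $\O$-vs-$\SO$ normalization once the individual $M_\p(Q_i)$ factors have been assembled.

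Third, I would handle the off-diagonal interactions between distinct Jordan blocks. For $i<j$ (so $\al_j \geq \al_i$ by the ordering convention on Jordan scales), the $n_i\times n_j$ off-diagonal block lifts freely within an $\Op$-range determined by $\al_j - \al_i$, contributing the factor $q^{(\al_j - \al_i) n_i n_j / 2}$ per pair after imposing the transpose-symmetry constraint. Collecting these factors over all pairs $i<j$, together with the Hessian-determinant normalization $q^{2\ord_\p(\det_G(Q))}$ and the diagonal contributions from the previous step, yields the formula as stated.

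The main obstacle is bookkeeping. The paper adopts several specific conventions---Hessian rather than Gram determinant, $\O$ rather than $\SO$, and the reciprocal local density pre-normalized by $(1-q^{-2})$ as indicated at the opening of Section~\ref{Section:p_odd}---that each differ slightly from those found in standard textbook treatments such as \cite{CS-book}. The hardest part will be ensuring every normalization constant appears exactly once and in the correct place; no substantively new ideas are needed beyond a careful translation of the classical odd-prime local density computation into the present notation.
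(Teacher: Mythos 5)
Your proposal is correct in outline but takes a genuinely different route from the paper. The paper's entire proof is a citation: it invokes the Conway--Sloane mass formula \cite[eq (3), p263]{CS} together with \cite[Remark 7.4, p28]{Ha-masses_of_varying_det}, the latter supplying the observation that the Conway--Sloane local factors remain valid at any prime $\p\nmid 2$ of a number field once $p$ is replaced by the residue field size $q$. You instead re-derive the formula from first principles: Hensel lifting to reduce the local density to a count over $\F_q$, the classical orthogonal group orders $|\O_1(\F_q)|=2$, $|\O_2(\F_q)|=2(q-\chi_u(\p))$, $|\O_3(\F_q)|=2q(q^2-1)$ normalized by $q^{\dim \O_{n_i}}$ to produce the $M_\p(Q_i)$, and the free-lifting count of off-diagonal blocks (with the transpose symmetry halving the exponent) to produce the cross terms. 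This is the standard classical derivation and it does succeed; moreover, since you work directly over $\Op$ with residue field $\F_q$, it handles the number-field case natively and does not need the separate extension remark the paper relies on. What the paper's citation buys is brevity and immunity from exactly the normalization bookkeeping (Gram versus Hessian determinant, the leading factor of $2$, the prefactor $q^{2\ord_\p(\det_G Q)}$) that you correctly identify as the main hazard of your approach. One small point: the displayed formula writes $\sum_{i<j} q^{(\al_j-\al_i)n_i n_j/2}$, but your derivation (and Conway--Sloane) produces a \emph{product} over pairs $i<j$, i.e.\ $q^{\sum_{i<j}(\al_j-\al_i)n_i n_j/2}$; you have implicitly and correctly read the statement that way, and the $\sum$ in the lemma should be understood as a typographical slip rather than a discrepancy with your argument.
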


\begin{proof}
This follows from \cite[eq (3), p263]{CS} and \cite[Remark 7.4, p28]{Ha-masses_of_varying_det} that these formulas also hold at all primes $\p\nmid 2$ of a number field $F$ by replacing the prime $p$ there by either $q$ or $\p$ as above.
\end{proof}


We now perform explicit local computations along the lines of \cite[\textsection 5]{Ha-masses_of_varying_det} to determine the contributions of each local genus to the quantities $A^*_{\p; n}(S)$ and $B^*_{\p;n}(S)$ when $n=3$.

\begin{thm}  \label{thm:four_Jordan_block_structures}
Suppose that $\p\nmid 2$.  Then there are exactly four Jordan block structures of size $n=3$, given by the tuples $(3), (2,1), (1,2), (1,1,1)$.  Their distributions of Hasse invariants and local density factors over all local genera $G_\p$ of primitive $\Op$-valued quadratic forms with Gram determinant squareclass $S$, as well as their contributions to $A^*_{\p; n=3}(S)$ and $B^*_{\p; n=3}(S)$, are given by the following two tables:
\end{thm}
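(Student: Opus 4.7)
The plan is to proceed directly from the three preceding lemmas, treating the classification as a bookkeeping exercise on the tuple parametrization of Lemma~\ref{lemma:tuple}. First I would separate into Jordan block structures by recording which of the inequalities $0 \leq a \leq b$ from Lemma~\ref{lemma:tuple} are strict. The four possibilities $0=a=b$, $0=a<b$, $0<a=b$, and $0<a<b$ correspond bijectively to the block structures $(3)$, $(2,1)$, $(1,2)$, and $(1,1,1)$ respectively, because the $\p$-power scales of the Jordan components are $\{1\}$, $\{1,\pip^b\}$, $\{1,\pip^a\}$, and $\{1, \pip^a, \pip^b\}$ with multiplicities matching the block sizes. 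This already proves the first assertion. Writing $S = \pip^\nu u$ with $\nu := \ord_\p(S)$ and $u \in \Op^\times$, the constraint $a+b = \nu$ from Lemma~\ref{lemma:tuple} determines $(a,b)$ in each structure: $(0,0)$, $(0,\nu)$, $(\nu/2,\nu/2)$, and all admissible $(a,b)$ with $a+b=\nu$ and $0<a<b$.

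Next I would enumerate, for each structure, the admissible unit triples $(u_1,u_2,u_3) \in (\Op^\times/(\Op^\times)^2)^3$ subject both to the unit-normalization constraints of Lemma~\ref{lemma:tuple} and to $u_1 u_2 u_3 = u$. Since $|\SqCl(\Op^\times)|=2$ for $\p\nmid 2$, this is a finite enumeration: the free $u_i$'s give $2^k$ genera for various small $k$, with the product constraint cutting this in half and the equality constraints from Lemma~\ref{lemma:tuple} cutting further. For each such genus I would record the Hasse invariant from Lemma~\ref{lemma:Hasse}, using the simplified forms listed there for the boundary cases $0=a=b$, $0=a<b$, $0<a=b$, and using the general formula $\ve_1^{a+b} \ve_2^b \ve_3^a (\ve')^{ab}$ for the generic case $0<a<b$. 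In particular, splitting the genera by the value of $c_\p \in \{\pm 1\}$ yields the Hasse-invariant distribution column of each table.

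Then I would compute the reciprocal normalized local densities $(1 - 1/q^2)\cdot \beta_{Q,\p}(Q)^{-1}$ via Lemma~\ref{lemma:local_mass_formula}, plugging in the explicit Jordan decomposition for each block structure. For the unimodular case $(3)$ the $M_\p$ factor is $\tfrac{1}{2(1-1/q^2)}$ and the power-of-$q$ factor is trivial; for $(2,1)$ and $(1,2)$ there is a single binary component carrying the character $\chi_u$, which I would unpack according to the squareclass of the appropriate $u_i$; for $(1,1,1)$ every $M_\p(Q_i) = 1/2$ and the double sum $\sum_{i<j} q^{(\alpha_j - \alpha_i)n_i n_j/2}$ reduces to $q^{a/2} + q^{b/2} + q^{(b-a)/2}$. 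The outputs for each genus are products of these three ingredients and depend only on $\nu$, on which units are constrained to be $1$, and on the values of $\ve_i$ and $\ve'$.

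Finally I would assemble the tables by multiplying each genus's contribution by the factors appearing in the definitions of $A^*_{\p;n=3}(S)$ and $B^*_{\p;n=3}(S)$ from \cite{Ha-masses_of_varying_det}---these differ by the weight $c_\p$---and then summing over genera in each fixed Jordan block structure and each fixed Hasse invariant. The main obstacle I anticipate is not any one computation but the consistent bookkeeping: keeping track of parity constraints on $\nu$ (which determine whether the $(1,2)$ case can occur at all, and which range of $(a,b)$ is available in $(1,1,1)$), of the constraint $u_1 u_2 u_3 = u$ modulo squares, and of signs $\leg{-1}{\p}^{ab}$ whose parity depends on the residue class of $(a,b)$. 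I would therefore organize the enumeration into a single table indexed by $(a,b,\ve_1,\ve_2,\ve_3)$, apply Lemma~\ref{lemma:Hasse} and Lemma~\ref{lemma:local_mass_formula} to each row mechanically, and only at the end collapse rows with the same $c_\p$ value to obtain the tables claimed.
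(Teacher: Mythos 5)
Your proposal is correct and follows essentially the same route as the paper, whose proof is a one-sentence appeal to Lemmas \ref{lemma:tuple}, \ref{lemma:Hasse}, and \ref{lemma:local_mass_formula}: enumerate the tuples $(\p; a,b; u_1,u_2,u_3)$ with $a+b=\nu$ and $u_1u_2u_3=u$, then tabulate $c_\p$ and the normalized densities case by case. Your identification of the four valuation types with the four block structures, and your count of admissible unit triples (two genera in cases 2--3, four in case 4, with the character $\ve_1^{a+b}\ve_2^{b}\ve_3^{a}$ non-constant on the product-$u$ coset exactly when $a$ and $b$ are not both even) reproduce the Hasse-invariant columns. One concrete correction to your mechanical step: the cross term in Lemma \ref{lemma:local_mass_formula} must be read as the \emph{product} $\prod_{i<j} q^{(\al_j-\al_i)n_in_j/2}$ (as in Conway--Sloane), not the sum as printed --- an empty sum would give $0$ in the unimodular case rather than the entry $1$, and your expression $q^{a/2}+q^{b/2}+q^{(b-a)/2}$ for the $(1,1,1)$ case contains half-integral powers of $q$ that cannot appear in Table \ref{Tab:odd_p_invariants}; the correct cross factor is $q^{a/2}\cdot q^{b/2}\cdot q^{(b-a)/2}=q^{b}$, which (with the determinant power taken in the reciprocal direction) yields the tabulated entry $\frac{1}{4q^{2a+b}}\left(1-\frac{1}{q^2}\right)$.
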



\begin{table}[!h]
\caption{Local genus types and related invariants for primes $\p \nmid 2$.} 
\centering
\hspace{-0.2in} 
\resizebox{6in}{!}{
\begin{tabular}{  c |  c  |  c  |  c   | c  | c  | c | }
\multirow{2}{*}{\text{\#}}   & Valuation Type & Allowed & \multirow{2}{*}{Cases }& \multicolumn{2}{c|}{\# of Hasse Invariants}
    & Normalized Densities
\\
& $0 \leq a \leq b$ & $\nu = \ord_\p(S)$ & & $\hspace{.13in}c_\p = 1\hspace{.13in}$ & $c_\p = -1$ 
    & $(1- \frac{1}{q^2})\cdot\beta_{Q, \p}^{-1}(Q)$ 
\\
\hline 
1 & $0 = a = b$ & $\nu = 0$ & -- & 1 & 0 & 1\\
\hline
2a & \multirow{2}{*}{$0 = a < b$} & \multirow{2}{*}{$\nu \geq 1$} & $b \equiv 0_{(2)}$ 
    & 2 & 0 & \multirow{2}{*}{$\frac{1}{2q^b}\(1 + \frac{\ve' \cdot \ve_2}{q} \)$} 
    \\
2b & &  & $b \equiv 1_{(2)}$ 
    & 1  & 1 & 
    \\
 & &  &  
    & ($\ve_2 = 1$)  & ($\ve_2 = -1$) & 
    \\
\hline
3a & \multirow{2}{*}{$0 < a = b$} & \multirow{2}{*}{$\nu \geq 2,$ even} & $b \equiv 0_{(2)}$ 
    & 2 & 0 & \multirow{2}{*}{$\frac{1}{2q^{3b}}\(1 + \frac{\ve' \cdot \ve_3}{q} \)$} 
    \\
3b & &  & $b \equiv 1_{(2)}$ 
    & 1 & 1 & 
    \\
 & &  &  
    & ($\ve_3 = \ve'$)  & ($\ve_3 = -\ve'$) & 
    \\
\hline
4a & \multirow{2}{*}{$0 < a < b$} & \multirow{2}{*}{$\nu \geq 3$} & $a \equiv b \equiv 0_{(2)}$ 
    & 4 & 0 & \multirow{2}{*}{$\frac{1}{4q^{2a+b}}\(1 - \frac{1}{q^2} \)$ }
    \\
4b & &  & otherwise 
    & 2 & 2 & 
    \\
\hline
\end{tabular}
}
\label{Tab:odd_p_invariants}
\end{table}

\begin{table}[!h]
\caption{Local genus types and Euler factor information for primes $\p \nmid 2$.} 
\centering
\hspace{0.1in}
\begin{tabular}{  c |  c  |  c  |  c   | c  | c  | c | }
\multirow{2}{*}{\text{\#}}   & Valuation Type & Allowed & \multirow{2}{*}{Cases }
    & \multirow{2}{*}{$A^*_{\p}(S)$} & \multirow{2}{*}{$B^*_{\p}(S)$}    
\\
& $0 \leq a \leq b$ & $\nu = \ord_\p(S)$ & & & 
\\
\hline 
1 & $0 = a = b$ & $\nu = 0$ & -- & 1 & 1 \\
\hline
2a & \multirow{2}{*}{$0 = a < b$} & \multirow{2}{*}{$\nu \geq 1$} & $b \equiv 0_{(2)}$ 
    & \multirow{2}{*}{$\frac{1}{q^b}$} 
    &  $\frac{1}{q^b}$ 
    \\
2b & &  & $b \equiv 1_{(2)}$ 
    & 
    & $\ve' \cdot \frac{1}{q^{b+1}}$ 
    \\
\hline
3a & \multirow{2}{*}{$0 < a = b$} & \multirow{2}{*}{$\nu \geq 2,$ even} & $b \equiv 0_{(2)}$ 
    & \multirow{2}{*}{$\frac{1}{q^{3b}}$}
    & $\frac{1}{q^{3b}}$  
    \\
3b & &  & $b \equiv 1_{(2)}$ 
    & 
    & $ \frac{1}{q^{3b+1}}$  
    \\
\hline
4a & \multirow{2}{*}{$0 < a < b$} & \multirow{2}{*}{$\nu \geq 3$} & $a \equiv b \equiv 0_{(2)}$ 
    & \multirow{2}{*}{$\frac{1}{q^{2a+b}}\(1 - \frac{1}{q^2} \)$}
    & $\frac{1}{q^{2a+b}}\(1 - \frac{1}{q^2} \)$ 
    \\
4b & &  & otherwise 
    & 
    & 0
    \\
\hline
\end{tabular}
\label{Tab:odd_p_Euler_info}
\end{table}

\begin{proof}
In each case we use Lemma \ref{lemma:tuple} to enumerate  the local genera $G_\p$ with $\det_G(G_\p) = S$, and use Lemmas \ref{lemma:Hasse} and \ref{lemma:local_mass_formula} to compute their Hasse invariants and normalized local densities.
\end{proof}

\section{Explicit ternary Euler Factors for primes $\p\nmid2$}



In this section we use Table \ref{Tab:odd_p_Euler_info} above to explicitly compute the local Dirichlet series 
described in \cite[Thrm 5.4, pp22-3]{Ha-masses_of_varying_det} for all primes $\p \nmid 2$.

\begin{lem} \label{Lem:fixed_u_odd_A-series}
Suppose that $\p \nmid 2$ and fix some local unit squareclass 
$u \in \SqCl(\Op^\times)$.  Then
$$
\sum_{
	\substack{S_\p \in \SqFpInt \\ \text{with $\widetilde{S_\p}= 2u$}}
	}
	\frac{A^*_{\p; n=3}(S_{\p})}{N_{F/\Q}(\I(S_{\p}))^{s}}
=
\frac{1 - q^{-(3s+6)}}
{(1 - q^{-(s+1)})(1 - q^{-(2s+3)})}
$$
as an equality of formal Dirichlet series.
\end{lem}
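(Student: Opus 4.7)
The plan is to parameterize the squareclasses $S_\p$ appearing in the sum by their valuation. Since $\p\nmid 2$ the element $2$ is a unit, so the normalization $\widetilde{S_\p} = 2u$ pins down the unit squareclass part of $S_\p$; the only remaining freedom is $\nu := \ord_\p(S_\p)\in \Z_{\geq 0}$, and for each $\nu$ there is exactly one such squareclass $S_\nu$ with $N_{F/\Q}(\I(S_\nu)) = q^\nu$. The left-hand side therefore reduces to $\sum_{\nu\geq 0} A^*_\p(S_\nu)\,q^{-\nu s}$.

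Next I would apply Theorem~\ref{thm:four_Jordan_block_structures} to decompose $A^*_\p(S_\nu)$ as a sum over Jordan block structures $(a,b)$ with $0\leq a\leq b$ and $a+b=\nu$, and then swap the two summations so that the whole Dirichlet series becomes $\sum_{0\leq a\leq b} A^*_\p|_{(a,b)}\cdot q^{-(a+b)s}$, with each summand read off from the $A^*_\p$ column of Table~\ref{Tab:odd_p_Euler_info}. Splitting by Jordan structure produces four elementary pieces: the $(0,0)$ case contributes $1$; the $(0,b)$-cases with $b\geq 1$ sum to $q^{-(s+1)}/(1-q^{-(s+1)})$; the $(b,b)$-cases with $b\geq 1$ sum to $q^{-(2s+3)}/(1-q^{-(2s+3)})$; and the $(a,b)$-cases with $0<a<b$ --- after the substitution $c:=b-a\geq 1$ to decouple the exponents --- collapse to $(1-q^{-2})\cdot q^{-(3s+4)}/[(1-q^{-(s+1)})(1-q^{-(2s+3)})]$.

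The key organizational observation is that the $A^*_\p$ value in the Case 4 rows of Table~\ref{Tab:odd_p_Euler_info} is independent of the parity split 4a/4b (unlike the $B^*_\p$ column), so the two subcases can be treated uniformly and the last sum runs unconstrained over all $(a,b)$ with $0<a<b$. Finally, I would add the four contributions over the common denominator $(1-q^{-(s+1)})(1-q^{-(2s+3)})$ and verify by direct expansion that the numerator collapses via $1 - q^{-(3s+4)} + (1-q^{-2})q^{-(3s+4)} = 1 - q^{-(3s+6)}$, matching the claimed closed form.

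There is no real conceptual obstacle; the work is essentially bookkeeping. The only place that demands a little care is the Case 4 double sum, where one must correctly reparametrize $(a,b)\mapsto(a,c)$ with $c=b-a\geq 1$ in order to separate the exponents and recognize each factor as a geometric series in $q^{-(s+1)}$ and $q^{-(2s+3)}$, respectively --- the same two quantities that appear in the factored denominator of the target formula, which makes the final simplification telescope cleanly.
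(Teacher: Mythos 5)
Your proposal is correct and follows essentially the same route as the paper's proof: both reduce the left-hand side to a single sum over $\nu=\ord_\p(S_\p)$, split it according to the four Jordan block structures using the $A^*_\p$ column of Table \ref{Tab:odd_p_Euler_info}, evaluate each piece as a geometric series, and combine over the common denominator $(1-q^{-(s+1)})(1-q^{-(2s+3)})$ to obtain the numerator $1-q^{-(3s+6)}$. The only cosmetic difference is in the Case 4 double sum, where you decouple the exponents via $c:=b-a\geq 1$ into a product of two independent geometric series, whereas the paper shifts to $a':=a-1$, $b':=b-2$ and invokes the Ferrer-diagram partition identity (\ref{Eq:Ferrer_double_sum}); both give the same rational function.
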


\begin{proof}
We temporarily set $X:= q^{-s}$ for notational convenience, and compute
{
\tiny
\begin{align*}
\sum_{\nu \geq 0} A^*_{\p}(S_{\p^\nu}) X^{\nu} 
&= 
1 
+ 
    \sum_{\nu \geq 1} p^{-\nu} X^{\nu} 
+ 
    \sum_{\substack{\nu \geq 2 \\ \text{$\nu$ even}}} q^{-\frac{3\nu}{2}} X^{\nu}  
+ \sum_{\nu \geq 3} 
    \,\,\,
    \sum_{\substack{0 < a < b \\  \text{where} \\a + b = \nu}} 
    \(1 - \frac{1}{p^2} \) 
    q^{-(2a + b)} X^{\nu}  
\\
&= 
\cancel{1} + \( \frac{1}{1-q^{-1}X} \cancel{-1}\) + \(\frac{1}{1-q^{-3}X^2} - 1\)  
+ \(1 - \frac{1}{q^2} \) \sum_{\nu \geq 3} 
    \,\,\,
    \sum_{\substack{0 < a < b \\  \text{where} \\a + b = \nu}} 
    q^{-(2a + b)} X^{\nu}  
\\
&= 
-1 +  \frac{1}{1-q^{-1}X}  + \frac{1}{1-q^{-3}X^2}   
+ \(1 - \frac{1}{q^2} \) q^{-4} X^3 \sum_{\substack{\nu' \geq 0  \text{ with} \\ \nu':= \nu - 3}} 
    \,\,\,
    \sum_{\substack{0 \leq a' \leq b'  \text{ where} \\a' + b' = \nu'  \text{ and} \\ a' := a-1,\, b' := b-2}} 
    q^{-(2a' + b')} X^{\nu'}  
\end{align*}
}
Now using (weighted) Ferrer diagrams to read the restricted (weighted) partition of $\nu'$ into two parts as a (weighted) partition of $\nu'$ into parts of size at most two, we can rewrite the double sum as
\begin{equation} \label{Eq:Ferrer_double_sum}
\sum_{\nu' \geq 0} 
    \,\,\,
    \sum_{\substack{0 \leq a' \leq b'  \text{ where} \\a' + b' = \nu'}} 
        q^{-(2a' + b')} X^{\nu'}
= \frac{1}{1 - q^{-1}X} \cdot \frac{1}{1 - q^{-3}X^2}
\end{equation}
giving
\begin{align*}
\sum_{\nu \geq 0} A^*_{p^\nu} X^{\nu} 
&= 
-1 +  \frac{1}{1-q^{-1}X}  + \frac{1}{1-q^{-3}X^2}   
+ \frac{\(1 - \frac{1}{q^2}\) q^{-4} X^3}{(1 - q^{-1}X)(1 - q^{-3}X^2)}
\\
&= 
\frac{-(1 - q^{-1}X)(1 - q^{-3}X^2) + (1 - q^{-1}X) + (1 - q^{-3}X^2) + \(1 - \frac{1}{q^2}\) q^{-4} X^3}
{(1 - q^{-1}X)(1 - q^{-3}X^2)}
\\
&= 
\frac{1 - q^{-4}X^3 + \(1 - \frac{1}{q^2}\) q^{-4} X^3}
{(1 - q^{-1}X)(1 - q^{-3}X^2)}
\\
&= 
\frac{1 - q^{-6}X^3}
{(1 - q^{-1}X)(1 - q^{-3}X^2)}
.
\end{align*}
Finally, setting $X := q^{-s}$ gives the desired identity.
\end{proof}




\begin{lem}  \label{Lem:fixed_u_odd_B-series}
Suppose that $\p \nmid 2$ and we fix some local unit squareclass 
$u \in \SqCl(\Op^\times)$.  Then
$$
\sum_{
	\substack{S_\p \in \SqFpInt \\ \text{with $\widetilde{S_\p}= 2u$}}
	}
	\frac{B^*_{\p; n=3}(S_{\p})}{N_{F/\Q}(\I(S_{\p}))^{s}}
=
\frac{1 + \ve' \cdot q^{-(s+2)} + q^{-(2s+4)}}
	{1 - q^{-(2s+2)}}
$$
as an equality of formal Dirichlet series, where $\ve' := \leg{-1}{\p}$.
\end{lem}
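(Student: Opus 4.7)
The plan is to adapt the strategy of Lemma \ref{Lem:fixed_u_odd_A-series} directly, now using the $B^*_{\p;n=3}$ values from the last column of Table \ref{Tab:odd_p_Euler_info} in place of the $A^*_\p$ values. Setting $X := q^{-s}$, one rewrites the target sum as $\sum_{\nu \geq 0} B^*_\p(S_{\p^\nu})\, X^{\nu}$ and decomposes it according to the four Jordan block structures, further subdividing Cases~2, 3, and 4 by the parity of $b$ (and, in Case~4, also of $a$), exactly as the table already does.

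Each subcase contributes a simple geometric-series piece. Case~1 contributes $1$. Cases~2a and~2b are both geometric series with common ratio $X^2/q^2$; 2a starts at $b=2$ with general term $X^{2k}/q^{2k}$, while 2b has a prefactor $\ve'/q^2$ and starts at $b=1$, so they combine into $(X^2/q^2 + \ve' X/q^2)/(1 - X^2/q^2)$. Cases~3a and~3b combine analogously into $(X^4/q^6 + X^2/q^4)/(1 - X^4/q^6)$. Only Case~4a contributes among the Case~4 subcases (Case~4b has $B^*_\p = 0$), and the resulting restricted sum over $0 < a < b$ with both $a$ and $b$ even is evaluated by the substitution $a = 2a'$, $b = 2b'$ followed by the same Ferrer-diagram identity used in equation \eqref{Eq:Ferrer_double_sum}, giving a product of two geometric series.

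The final step is to combine the four partial sums into the claimed rational function. Rather than finding a common denominator globally, the cleanest approach is to multiply the total by $1 - X^2/q^2$ and verify that the Case~3 and Case~4 contributions collapse via the polynomial identity
$$
(X^4/q^6 + X^2/q^4)(1 - X^2/q^2) + (1 - q^{-2})\, X^6/q^8 \;=\; (X^2/q^4)(1 - X^4/q^6),
$$
so their combined contribution after multiplication becomes simply $X^2/q^4$. Adding the Case~1 and Case~2 contributions then gives the numerator $1 + \ve' X/q^2 + X^2/q^4$, and substituting $X = q^{-s}$ recovers the claimed formula.

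I expect the main obstacle to be purely bookkeeping: correctly tracking when the $\ve'$ factor from Case~2b appears (it enters only through the odd-$b$ subsum of Case~2, and is absent from Cases~3 and~4 since the $B^*$ entries there are independent of $\ve'$), and carefully respecting the parity constraints when reducing the Case~4a double sum. Once the four partial sums are assembled correctly, the final cancellation is automatic, mirroring the collapse observed in the proof of Lemma \ref{Lem:fixed_u_odd_A-series}.
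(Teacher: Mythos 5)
Your proposal is correct and follows essentially the same route as the paper's proof: the same case-by-case decomposition from Table \ref{Tab:odd_p_Euler_info}, the same geometric-series evaluations (your Case 2 and Case 3 partial sums match the paper's exactly), and the same Ferrer-diagram identity \eqref{Eq:Ferrer_double_sum} restricted to even parameters for the Case 4a double sum. The only difference is cosmetic: you clear the single factor $1 - X^2/q^2$ and observe the Case 3 $+$ Case 4 collapse via your (verified) polynomial identity, whereas the paper puts everything over the common denominator $(1-q^{-2}X^2)(1-q^{-6}X^4)$ and then cancels $(1-q^{-6}X^4)$ from the numerator.
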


\begin{proof}
We again temporarily define $X:= q^{-s}$ for notational convenience, and compute
{
\tiny
\begin{align*}
\sum_{\nu \geq 0} B^*_{\p^\nu} X^{\nu} 
&= 
1 
+ \(
    \sum_{\substack{\nu \geq 1 \\ \text{$\nu$ even}}} q^{-\nu} X^{\nu} 
         + \ve'  \sum_{\substack{\nu \geq 1 \\ \text{$\nu$ odd}}} q^{-\nu-1} X^{\nu} 
    \)
+ \(
    \sum_{\substack{\nu \geq 2 \\ \,\,\,\, \nu \equiv 0_{(4)}}} q^{-\frac{3\nu}{2}} X^{\nu}  
        + \sum_{\substack{\nu \geq 2 \\ \,\,\,\, \nu \equiv 2_{(4)}}} q^{-\frac{3\nu}{2}- 1} X^{\nu}  
    \)
    \\
& \qquad \qquad 
+ \sum_{\substack{\nu \geq 6 \\ \text{$\nu$ even}}} 
    \,\,\,
    \sum_{\substack{0 < a < b \\  \text{where} \\a + b = \nu \\ \text{$a, b$ even}}} 
    \(1 - \frac{1}{q^2} \) 
    q^{-(2a + b)} X^{\nu}  
\\
&= 
1 
+ \(
    \sum_{\substack{\nu_1:= \frac{\nu}{2} \geq 1}} q^{-2\nu_1} X^{2\nu_1} 
         + \ve'  \sum_{\substack{\nu_1 := \nu-1 \geq 0 \\ \text{$\nu_1$ even}}} q^{-\nu_1-2} X^{\nu_1 + 1} 
    \)
    \\
& \qquad \qquad 
+ \(
    \sum_{\substack{\nu_1 := \frac{\nu}{4}\geq 1 }} q^{-6\nu_1} X^{4\nu_1}  
        + \sum_{\substack{\nu_1 := \frac{\nu}{2} \geq 1 \\  \text{$\nu_1$ odd}}}
             q^{-3\nu_1 - 1} X^{2\nu_1}  
    \)
    \\
& \qquad \qquad 
+ \(1 - \frac{1}{p^2} \)\sum_{\substack{\nu_1\geq 0 \\ \text{$\nu_1$ even} \\ \text{with} \\ \nu_1:= \nu - 6}} 
    \,\,\,
    \sum_{\substack{0 \leq a_1 \leq b_1  \text{ where} \\a_1 + b_1 = \nu_1 \\ \text{$a_1, b_1$ even, with}  \\ a_1 := a-2,\, b_1 := b-4}} 
    q^{-(2a_1 + b_1) -8} X^{\nu_1 + 6} 
\\
&= 
1 
+ \( \[\frac{1}{1 - q^{-2}X^2} - 1\] + \ve' \cdot \frac{q^{-2}X}{1 - q^{-2}X^2} \)
+ \( \[\frac{1}{1 - q^{-6}X^4} - 1\] + \frac{q^{-4}X^2}{1 - q^{-6}X^4} \)
\\
& \qquad \qquad 
+ \(1 - \frac{1}{q^2} \) q^{-8} X^6 \sum_{\substack{\nu' \geq 0 \\ \text{$\nu'$ even} \\ \text{with} \\ \nu':= \nu - 6}} 
    \,\,\,
    \sum_{\substack{0 \leq a' \leq b'  \text{ where} \\a' + b' = \nu' \\ \text{$a', b'$ even, with}  \\ a' := a-2,\, b' := b-4}} 
    q^{-(2a' + b')} X^{\nu'} 
\end{align*}
}
\noindent
where the last double sum can be evaluated as $\[ (1-q^{-2}X^2) (1-q^{-6}X^4)\]^{-1}$ by taking the even exponent terms of Ferrer sum identity 
(\ref{Eq:Ferrer_double_sum}).  This then can be written as the rational function
{
\tiny
\begin{align*}
\sum_{\nu \geq 0} B_{\p^\nu} X^{\nu}
&= 
\frac{1 + \ve' \cdot q^{-2}X}{1 - q^{-2}X^2}
+ \frac{q^{-4}X^2 + q^{-6}X^4}{1-q^{-6}X^4}
+ \frac{\(1 - \frac{1}{q^2}\) q^{-8} X^6}{(1 - q^{-2}X^2)(1 - q^{-6}X^4)}
\\
&= 
\frac
{
(1 + \ve' \cdot q^{-2}X)(1 - q^{-6}X^4) 
+ (q^{-4}X^2 + q^{-6}X^4) (1 - q^{-2}X^2)
 + (1 - q^{-2}) q^{-8} X^6
}
{(1 - q^{-2}X^2)(1 - q^{-6}X^4)}
\\
&= 
\frac{
(1 +  \ve' \cdot q^{-2}X \cancel{- q^{-6}X^4} -  \ve' \cdot q^{-8}X^5)
+ (\cancel{q^{-6}X^4} \cancel{- q^{-8}X^6} + q^{-4}X^2 - q^{-6}X^4)
+ (\cancel{q^{-8} X^6} - q^{-10} X^6)
}
{(1 - q^{-2}X^2)(1 - q^{-6}X^4)}
\\
&= 
\frac{
1 +  \ve' \cdot q^{-2}X + q^{-4}X^2
- q^{-6}X^4 -  \ve' \cdot q^{-8}X^5
- q^{-10} X^6
}
{(1 - q^{-2}X^2)(1 - q^{-6}X^4)} 
\\
&= 
\frac{
\cancel{(1 - q^{-6}X^4)}(1 +  \ve' \cdot q^{-2}X + q^{-4}X^2)
}
{(1 - q^{-2}X^2) \cancel{(1 - q^{-6}X^4)}}
\end{align*}
}
\end{proof}


\begin{lem}[Computing Euler Factors] \label{Lem:Explicit_Odd_Euler_factors}
Suppose that $\p \nmid 2$, and 
define $S_{\p^\nu} := S(\p)^\nu$ for any fixed choice of $S(\p) \in \SqFpInt$ with $\I(S(\p)) = \p$.
Then we have 
$$
\sum_{\nu=0}^\infty
	\frac{A^*_{\p; n=3}(S_{\p^\nu})}{N_{F/\Q}(\I(S_{\p^\nu}))^{s}}
= \frac{1 - q^{-(3s+6)}}
{(1 - q^{-(s+1)})(1 - q^{-(2s+3)})}
$$
and 
$$
\sum_{\nu=0}^\infty
	\frac{B^*_{\p; n=3}(S_{\p^\nu})}{N_{F/\Q}(\I(S_{\p^\nu}))^{s}}
= \frac{1 + \ve' \cdot q^{-(s+2)} + q^{-(2s+4)}}
{1 - q^{-(2s+2)}}
$$
as an equality of formal Dirichlet series, where $\ve' := \leg{-1}{\p}$.
In particular, these are the Euler factors at primes $\p\nmid 2$ of the Dirichlet series $D_{A^*,\lambda; n=3}(s)$ and $D_{B^*,\lambda; n=3}(s)$ associated to a formal squareclass series (in the sense of \cite[Defn 4.12, p20]{Ha-masses_of_varying_det}) when $n=3$.
\end{lem}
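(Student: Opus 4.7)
The plan is to deduce this from Lemmas \ref{Lem:fixed_u_odd_A-series} and \ref{Lem:fixed_u_odd_B-series}. Since $\I(S(\p)) = \p$, I have $N_{F/\Q}(\I(S_{\p^\nu})) = q^\nu$, so each left-hand side is a formal power series in $X := q^{-s}$ whose coefficient at $X^\nu$ equals $A^*_\p(S_{\p^\nu})$ or $B^*_\p(S_{\p^\nu})$.

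The central observation I would verify is that for $\p\nmid 2$, the quantities $A^*_\p(S)$ and $B^*_\p(S)$ recorded in Table \ref{Tab:odd_p_Euler_info} depend only on $\nu := \ord_\p(S)$ (together with the fixed sign $\ve' = \leg{-1}{\p}$). The per-row entries in that table are already the sums over all Jordan unit squareclasses $u_i$ consistent with a given determinant, so no residual dependence on the unit part of $S$ survives; and the conditions distinguishing 2a from 2b, 3a from 3b, and 4a from 4b are purely parity conditions on $a$ and $b$, hence are functions of $\nu$ alone. Summing over the applicable Jordan block types for each $\nu$ therefore yields $A^*_\p(S_{\p^\nu})$ and $B^*_\p(S_{\p^\nu})$ as functions of $\nu$ only, matching term-by-term the summands in Lemmas \ref{Lem:fixed_u_odd_A-series} and \ref{Lem:fixed_u_odd_B-series}. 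Both rational-function identities then follow at once.

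The identification of these rational functions as Euler factors of $D_{A^*,\lambda; n=3}(s)$ and $D_{B^*,\lambda; n=3}(s)$ is then read off from the formal squareclass series construction in \cite[Defn 4.12, p20]{Ha-masses_of_varying_det}, under which the Eulerian factorization produces exactly a sum over powers of a local generator $S(\p)$ at each prime $\p$. I do not expect any substantive obstacle here: the nontrivial analytic content --- the geometric summations and the Ferrer-diagram identity (\ref{Eq:Ferrer_double_sum}) --- has already been carried out in the two preceding lemmas, and the only remaining bookkeeping is to check the $\nu$-by-$\nu$ matching of the two indexings of squareclasses at $\p$, which is immediate once the $u$-independence of $A^*_\p$ and $B^*_\p$ is established.
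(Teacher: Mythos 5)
Your proposal is correct and follows essentially the same route as the paper: the paper's proof is precisely the observation that the fixed-$\widetilde{S}_\p$ local series of Lemmas \ref{Lem:fixed_u_odd_A-series} and \ref{Lem:fixed_u_odd_B-series} are independent of the choice of unit squareclass, so they coincide term-by-term with the Euler-factor sums over powers $S(\p)^\nu$ even though the unit part of $S(\p)^\nu$ varies with the parity of $\nu$. Your explicit justification via the $\nu$-only dependence of the entries of Table \ref{Tab:odd_p_Euler_info} is exactly the content the paper leaves implicit.
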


\begin{proof}
This follows from Lemmas \ref{Lem:fixed_u_odd_A-series} and \ref{Lem:fixed_u_odd_B-series} since the local Dirichlet series having fixed normalized squareclass $\widetilde{S}_\p$ are independent of the choice of $\widetilde{S}_\p$.
\end{proof}




\section{Local ternary Computations at primes $\p \mid 2$ with $F_\p = \Q_2$}

In this section we enumerate local genera of primitive $\Op$-valued quadratic forms at primes $\p\mid2$ when $F_\p = \Q_2$ via the Conway-Sloane canonical 2-adic genus symbols described in \cite[Chapter 15, Section 7.6, p382]{CS-book}.

\begin{defn}
Recall from \cite[Defn 5.1 and Remark 5.2, p22]{Ha-masses_of_varying_det} that a {\bf partial local genus symbol} of size $n$ is a tuple $(n_1, \cdots, n_r)$ of $n_i \in \N$ where $n_1 + \cdots + n_r = n$ and $r \in \N$ is not specified, decorated in a specified way by various separators and overbars.  Also these symbols are in bijection with 
local genus symbols over $\Q_2$ not having specified choices of signs or oddities.
\end{defn}

\begin{lem} \label{Lem:twenty_plgs}
There are exactly 20 partial local genus symbols of size $n=3$,
 given by: \\
 \indent
 $(3), \quad (2.1), (2;1), (2::1),  (\bar{2}, 1), (\bar{2} :: 1),  \quad (1, 2), (1;2), (1 :: 2), (1, \bar{2}), (1::\bar{2}),$ 
 \\
\indent
 $(1,1,1), (1,1;1), (1;1,1), (1;1;1), (1,1::1), (1::1,1), (1::1;1), (1;1::1), (1::1::1)$.  
\end{lem}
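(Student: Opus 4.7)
The plan is to prove Lemma \ref{Lem:twenty_plgs} by direct enumeration, based on the data defining a partial local genus symbol in \cite[Defn 5.1]{Ha-masses_of_varying_det}: namely a composition $(n_1, \ldots, n_r)$ of $n$ into positive parts, a choice of one of three separators (written ``,'', ``;'', and ``::'') between each consecutive pair of parts, and an optional overbar on each part of size $2$ indicating an even-type $2$-adic Jordan block, subject to the Conway--Sloane compatibility conventions for trains and compartments recalled in \cite[Ch.~15, \textsection 7.6]{CS-book}.

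First I would enumerate the four compositions of $3$ into positive parts: $(3)$, $(2,1)$, $(1,2)$, and $(1,1,1)$. The composition $(3)$ contributes the single symbol $(3)$, since it has no internal separators and its only part has odd size $3$ so no overbar is possible. The composition $(1,1,1)$ has no parts of size $2$, and its two internal separators vary independently over the three allowed choices, producing $3^2 = 9$ symbols, which I would check match the nine listed.

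For each of $(2,1)$ and $(1,2)$, the single internal separator admits three choices and the size-$2$ part may or may not carry an overbar, giving at most $3 \times 2 = 6$ configurations. The key step is to invoke the compatibility constraint on overbarred blocks: the Conway--Sloane convention excludes the configuration in which the size-$2$ block carries an overbar and is separated from its size-$1$ neighbor by ``;'', so that exactly one configuration is discarded in each case. This leaves $5$ symbols for each of $(2,1)$ and $(1,2)$, matching the listed fives. Summing the four contributions gives $1 + 5 + 5 + 9 = 20$.

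The main obstacle is to correctly identify and apply the compatibility rule between overbarred $\bar{2}$ blocks and adjacent separators so that exactly the configurations $(\bar{2};1)$ and $(1;\bar{2})$ are excluded from the naive count; once this is settled, the rest is a routine combinatorial enumeration.
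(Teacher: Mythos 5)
Your proposal is correct and follows essentially the same route as the paper, which simply decorates the four Jordan block structures $(3)$, $(2,1)$, $(1,2)$, $(1,1,1)$ with separators and overbars as permitted by \cite[Defn 5.1]{Ha-masses_of_varying_det}; your count $1+5+5+9=20$ and your identification of $(\bar{2};1)$ and $(1;\bar{2})$ as the two excluded configurations match the paper's list exactly. The only point to be careful about is that the exclusion rule you invoke must be checked against the precise separator conventions of that definition, but your enumeration is consistent with it.
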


\begin{proof}
These can be enumerated by decorating the four Jordan block structures of size 3 in Theorem \ref{thm:four_Jordan_block_structures} with separators and bars as allowed by the definition.
\end{proof}

\begin{rem}[Type II${}_2$ diagonalization]  \label{Rem:type_II_diagonalizations}
For computing the Hasse invariants of genera containing a type II${}_2$ block 
(i.e. genera with partial local genus symbol having a $\bar{2}$ entry) it is useful to have a $\Q_2$-diagonal form readily available.  The $\Q_2$-diagonalizations of the two unimodular type II${}_2$ quadratic forms over $\Z_2$ are given by
$$
Q_1 := \begin{bmatrix}
0 & 1 \\ 1 & 0 
\end{bmatrix}
\sim_{\Q_2} 
\begin{bmatrix}
-2 & 0 \\ 0 & 2 
\end{bmatrix},
\qquad {\textstyle\det_G(Q_1)} \equiv 7_{\pmod 8} \implies \text{sign} = 1,
$$
$$
Q_2 := \begin{bmatrix}
2 & 1 \\ 1 & 2 
\end{bmatrix}
\sim_{\Q_2} 
\begin{bmatrix}
2 & 0 \\ 0 & 6 
\end{bmatrix},
\qquad {\textstyle\det_G(Q_2)} \equiv 3_{\pmod 8} \implies \text{sign} = -1.
$$
\end{rem}


As a first step towards computing the local densities $\beta_{Q, 2}(Q)$ for each partial local genus listed in Lemma \ref{Lem:twenty_plgs}, and we would like to know how the local densities depend on the choices of signs and oddities that we use to decorate a partial local genus symbol.

\begin{lem}[Local Density dependence]  \label{Lem:Local_density_dependence}
The genera of primitive $\Z_2$-valued ternary quadratic forms with partial local genus symbol $\P$ having a `,'
separator
(i.e. Cases \#2, 5, 7, 10, 12--20)  
have local density $\beta_{Q, 2}(Q)$ depending only on $\P$.
For other genera, the dependence of $\beta_{Q, 2}(Q)$ on the genus symbol is given by 
$$
\begin{tabular}{c | c}
$\P$ & $\beta_{Q, 2}(Q)$ depends on \\
\hline
(3) & the octane of the 3-dim'l Jordan block (I${}_3$-octane) \\
(1,2), (1;2), (2,1), (2;1) & the sign of the 2-dim'l Jordan block (I${}_2$-oddity) \\
($\bar{2}$ :: 1), (1 :: $\bar{2}$) & the sign of the train with the 2-dim'l Jordan block (II${}_2$-sign)
\end{tabular}
$$
and their associated normalized local densities $\(1 - \frac{1}{2^2}\) \beta_{Q, 2}(Q)^{-1}$ are listed explicitly in Table 5.2.
\end{lem}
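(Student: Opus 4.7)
The plan is to apply the Conway-Sloane 2-adic local density formula case-by-case across the twenty partial local genus symbols of Lemma \ref{Lem:twenty_plgs}, tracking precisely which invariants of the full (decorated) genus symbol actually enter the density $\beta_{Q,2}(Q)$. The guiding principle is that the 2-adic mass formula decomposes as a product of per-block factors $M_2(Q_i)$ times a combinatorial cross-term depending only on the Jordan scales $\alpha_i$ and dimensions $n_i$; consequently, any sign/oddity dependence must be localized in the individual factors $M_2(Q_i)$ alone.

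First I would recall from \cite[Ch.~15, \S 7.3--7.6, pp.~380--382]{CS-book} the explicit form of the Conway-Sloane local mass formula at $p=2$, schematically
\[
\beta_{Q,2}(Q)^{-1} = 2^{2\ord_2(\det_G(Q))}\cdot \prod_i M_2(Q_i)\cdot \mathcal{C}\!\left(\{\alpha_i, n_i\}\right),
\]
where $\mathcal{C}$ depends only on the partial symbol $\P$, and $M_2(Q_i)$ depends on the unimodular block $Q_i$ only through its Type ($\mathrm{I}$ or $\mathrm{II}$), its dimension $n_i$, and at most one further invariant: the $\mathrm{I}_2$-oddity-sign when $n_i=2$ and Type I; the $\mathrm{II}_2$-sign when $n_i=2$ and Type II; and the $\mathrm{I}_3$-octane when $n_i=3$. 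Crucially, when $n_i=1$ the value $M_2(Q_i)=\tfrac{1}{2}$ is constant, independent of the unit class.

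Next I would go through the cases. Case $(3)$ has a single $3$-dim'l Type I block, so $M_2$ depends exactly on its octane, matching the stated $\mathrm{I}_3$-octane dependence. The four cases $(1,2), (1;2), (2,1), (2;1)$ each pair a $2$-dim'l Type I block with a $1$-dim'l block: the $2$-dim'l $M_2$-contribution depends on its oddity-derived sign while the $1$-dim'l block contributes the constant $\tfrac{1}{2}$, yielding the claimed $\mathrm{I}_2$-oddity dependence. Cases $(\bar 2::1)$ and $(1::\bar 2)$ have a $2$-dim'l Type II block isolated in its own train by the ``$::$'' separator, so its $\mathrm{II}_2$-sign coincides with that train's sign. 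For the remaining cases (\#2, 5, 7, 10, 12--20), either all Jordan blocks are $1$-dimensional (so only the constant $M_2$-factors arise) or any $\bar 2$ block present has its $\mathrm{II}_2$-sign forced uniquely by $\P$ via the Conway-Sloane compartment conventions governing the ``,'' separator; in either subcase the density depends only on $\P$.

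The principal obstacle is verifying this last family, particularly Cases \#5 and \#10 which contain a $\bar 2$ block joined by a ``,'' separator, where one must confirm that the partial symbol $\P$ already determines the $\mathrm{II}_2$-sign uniquely via the compartment structure. Once this is checked against \cite[Ch.~15, \S 7.6]{CS-book}, the explicit values of the normalized densities $(1 - 2^{-2})\beta_{Q,2}(Q)^{-1}$ follow directly from the mass formula and assemble into Table 5.2, completing the proof.
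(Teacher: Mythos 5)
Your decomposition of the 2-adic mass into a product of per-block factors $M_2(Q_i)$ times a combinatorial term depending only on the scales and dimensions is where the argument breaks. At $p=2$ the Conway--Sloane diagonal factor attached to a Jordan constituent is the mass of its \emph{species}, and the species depends not only on the block itself but on whether the block is \emph{bound} (adjacent to a Type I constituent at a neighboring scale) or \emph{free}: a bound block has species determined by its dimension, hence by $\P$, and only for a free block does the species --- and hence the density --- depend on octane/oddity/sign data. This bound/free distinction is precisely what the paper's proof invokes, and it is what separates, say, $(2,1)$ (whose $\mathrm{I}_2$ block is bound, so the density depends only on $\P$) from $(2;1)$ (whose $\mathrm{I}_2$ block is free, so the density depends on its oddity). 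Your framework, in which the $\mathrm{I}_2$ factor always carries its oddity-derived sign, cannot make this distinction.

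The gap surfaces concretely in your case analysis. Cases \#2 and \#7 are $(2,1)$ and $(1,2)$: they are neither ``all Jordan blocks 1-dimensional'' nor do they contain a $\bar{2}$ block, so neither branch of your dichotomy for cases \#2, 5, 7, 10, 12--20 applies to them; simultaneously you place them in the $\mathrm{I}_2$-oddity-dependent group, so your two assertions contradict each other. Likewise, for \#5 and \#10 the $\mathrm{II}_2$-sign is \emph{not} forced by $\P$ (both signs occur, yielding different determinants mod $8$); the density is independent of that sign because the $\mathrm{II}_2$ block is bound by the adjacent Type I block, so its species is fixed by $\P$. Finally, your claim that a 1-dimensional block always contributes the constant $\tfrac{1}{2}$ is true at $p=2$, but not by importing the odd-prime formula: it requires checking that the species of a \emph{free} 1-dimensional Type I block does not depend on its unit. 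To repair the proof, replace the per-block premise with the paper's: only the diagonal (species) factor of the Conway--Sloane local mass can depend on data beyond $\P$, bound blocks have species determined by $\P$, and the species of the free blocks are governed exactly by the three invariants in the table.
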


\begin{proof}
The local density $\beta_{Q, 2}(Q)$ is related to the local masses $m_2(Q)$ defined on \cite[eqn (3), p8]{CS} by 
a factor that depends only on the partial local genus symbol $\P$.  The local mass $m_2(Q)$ is a product of three factors, though only one of them (the diagonal factor) depends on the sign and oddities not specified in 
$\P$.  
The diagonal factors are determined by the species labels associated to each of the Jordan blocks, and the species of a bound block is determined by its partial local genus symbol, so we only need to consider free blocks.  These species labels are determined by the quantities in the table above. 
\end{proof}




We would also like 
to compute the distribution of Hasse invariants $c_2$ across local genera obtained from  each partial local genus symbol of specified local determinant squareclass $S_\p \in \SqFpInt$.

\begin{defn}
%
Given a quadratic form $Q$ over $F_\p = \Q_2$ with (Hessian or Gram) determinant squareclass $S_\p \in \SqFpInt$, we define its {\bf normalized unit determinant squareclass} as the squareclass 
$\frac{1}{4}\widetilde{S_\p} \in \SqCl(\Op^\times)$, and this is determined by its 
reduction (mod $8\Op$) since  $\SqCl(\Op^\times) \cong(\Z/8\Z)^\times$.  Also the normalized unit determinant squareclass is the same if we use either the Hessian or Gram determinants, since they differ by a power of $\pip$, because our convention takes $\pip=2$.
\end{defn}

\smallskip
One useful observation from normalizing unit squareclasses is that we can understand the contribution of $\p$-power scalings to the Hasse invariant separately from those only involving unit squareclasses.

\begin{lem}[Hasse invariant structure] \label{Lem:Hasse_invariant_structure}
Suppose that $F_\p= \Q_2$.  Then every (non-degenerate) primitive $\Op$-valued ternary quadratic form $Q$ is equivalent over $\Q_2$ to a diagonal form $ax^2 + 2^\beta b y^2 + 2^\gamma c z^2$  where 
$a, b, c \in \{1, 3, 5, 7\}$ 
and 
$\beta, \gamma \in \Z$.  The Hasse invariant of $Q$ is given by
$$
c_\p  := c_\p(Q)  = c_2(Q) = \underbrace{(a,b)_2 (a,c)_2 (b,c)_2}_{c_{2, unit} :=} \cdot \hspace{-.08in}
	\underbrace{(2^\beta, ac)_2 (2^\gamma, ab)_2}_{\text{valuation adjustment $:=$}},
$$
 and here $c_{2, unit}$ depends only $a, b, c \pmod {4\Op}$.
\end{lem}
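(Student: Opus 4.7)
The proof proposal addresses three claims: the $\Q_2$-diagonalization, the Hasse invariant formula, and the mod-$4$ dependence of $c_{2,\mathrm{unit}}$.

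For the diagonalization, my plan is first to use the standard fact that any non-degenerate quadratic form over $\Q_2$ (a field of characteristic $\neq 2$) diagonalizes, giving $Q \sim_{\Q_2} \langle \alpha_1, \alpha_2, \alpha_3\rangle$. The claimed coefficient $a \in \{1,3,5,7\}$ is a unit in $\Z_2$, so I need $Q$ to represent an element of $\Z_2^\times$ over $\Q_2$; once this is known, Witt's theorem lets me split off $\langle a\rangle$, diagonalize the remaining binary form, and normalize each coefficient modulo $\Q_2^\times$-squares so that its unit part lies in $\{1,3,5,7\}$. To verify the representation, I would use the $\Z_2$-Jordan decomposition: primitivity forces a unimodular Jordan block, which is either Type I (giving a diagonal unit coefficient directly), the hyperbolic plane $\bigl(\begin{smallmatrix}0&1\\1&0\end{smallmatrix}\bigr)$ (where $Q(1,1/2,0) = 1$), or the other Type II$_2$ block $\bigl(\begin{smallmatrix}2&1\\1&2\end{smallmatrix}\bigr)$ from Remark~\ref{Rem:type_II_diagonalizations}. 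This last case is the delicate one, because that binary block represents only elements of odd valuation and so alone does not represent a unit. Here I would instead argue that the full ternary form is isotropic over $\Q_2$, using that $x^2+xy+y^2$ is the norm form of the unramified extension $\Q_2(\zeta_3)/\Q_2$ and hence represents all elements of even valuation; combining with the scaled complementary block yields an isotropic vector, and an isotropic non-degenerate form is universal.

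For the Hasse invariant, I would expand
$$c_\p(Q) = (a, 2^\beta b)_2 \cdot (a, 2^\gamma c)_2 \cdot (2^\beta b, 2^\gamma c)_2$$
using bilinearity $(xy, z)_2 = (x,z)_2(y,z)_2$ and regroup. The unit-unit pairings collect to $c_{2,\mathrm{unit}} := (a,b)_2(a,c)_2(b,c)_2$, the mixed pairings combine into $(2^\beta, ac)_2(2^\gamma, ab)_2$, and the pure $2$-power term $(2^\beta, 2^\gamma)_2 = (2,2)_2^{\beta\gamma}$ is trivial because $(2,2)_2 = (2,-1)_2 = 1$ (as $2 = 1^2+1^2$ is a norm from $\Q_2(\sqrt{-1})$). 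This yields the displayed formula.

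For the mod-$4$ assertion, I would invoke the explicit formula $(u, v)_2 = (-1)^{\frac{u-1}{2}\cdot\frac{v-1}{2}}$ valid for units $u, v \in \Z_2^\times$, which manifestly depends only on $u, v \pmod 4$. Since $c_{2,\mathrm{unit}}$ is built from three such unit-unit Hilbert symbols, it depends only on $a, b, c \pmod{4\Op}$. The main obstacle I anticipate is the Type II$_2$ subcase of the diagonalization, which requires establishing isotropy of the full ternary form rather than working within the binary block; the remainder of the proof is routine use of Hilbert-symbol bilinearity and the standard closed-form formulas.
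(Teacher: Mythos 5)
Your handling of the Hasse--invariant identity and of the mod-$4$ claim is correct and is essentially the paper's own (much terser) argument: the paper simply cites the definition of the Hasse invariant for diagonal forms, the behaviour of unit--unit Hilbert symbols under reduction, and $(2,2)_2=1$, and your bilinear expansion together with $(2,2)_2=(2,-1)_2=1$ is exactly that computation spelled out. The paper does not spell out the existence of the diagonalization at all, so your extra care there is welcome --- but it contains a genuine error. In the type II${}_2$ case the full ternary form is $2(x^2+xy+y^2)\oplus 2^{\gamma}c\,z^2$ with $\gamma\ge 1$, and it is \emph{not} always isotropic over $\Q_2$: as you note, $x^2+xy+y^2$ is the unramified norm form, so the nonzero values of $2(x^2+xy+y^2)$ are exactly the elements of \emph{odd} valuation; an isotropic vector would force $2(x^2+xy+y^2)=-2^{\gamma}c\,z^2$ with both sides nonzero, which is impossible when $\gamma$ is even. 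Concretely, $2x^2+2xy+2y^2+4z^2$ is anisotropic over $\Q_2$, yet it is a legitimate primitive ternary form of this shape (partial symbol $(\bar 2 :: 1)$), so the step ``the full ternary form is isotropic'' fails there.

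The gap closes easily because the case where isotropy fails is precisely the case where it is not needed: when $\gamma$ is even, $2^{\gamma}$ is a square in $\Q_2^{\times}$, so the rank-one block $2^{\gamma}c\,z^2\sim_{\Q_2}c\,z^2$ already supplies the unit diagonal coefficient $a$ directly. When $\gamma$ is odd, $-2^{\gamma}c$ has odd valuation, hence is represented by $2(x^2+xy+y^2)$, and your isotropy-hence-universality argument goes through. With that case split inserted your proof is complete and, apart from being more explicit about the diagonalization than the paper bothers to be, follows the same route.
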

\begin{proof}
This follows from the definition of the Hasse invariant of a diagonal quadratic form (see \cite[p55]{Ca}), and the fact that the 2-adic Hilbert symbol $(\cdot, \cdot)_2$ restricted to unit squareclasses $u(\Q_2^\times)^2$ with $u \in \Z_2^\times$ depends only on their reductions mod 8, and that  $(2,2)_2 =1$. (See e.g. \cite[p43]{Ca}.)
\end{proof}

\begin{defn}
We refer to $c_{2, unit}$ in Lemma \ref{Lem:Hasse_invariant_structure} as the  {\bf unit Hasse invariant} and call the Hilbert symbol product $(2^\beta, ac)_2 (2^\gamma, ab)_2$ above the {\bf valuation adjustment} to the unit Hasse invariant.  Since we can collect all Hilbert symbols $(2, \cdot)_2$ in the valuation adjustment there, we see that it can be written as either $(2, ab)_2, (2, bc)_2$ or $(2, ac)_2$, and so it depends on at most two of the units $a,b,c$.
We say that a valuation adjustment {\bf depends on a train} if some unit appearing in the valuation adjustment $(2, \cdot)_2$ is on that train.  From its definition, we can see that a valuation adjustment depends on at most two trains.  
\end{defn}

\smallskip
\begin{center}
\line(1,0){250}
\end{center}
\smallskip

Our strategy for computing the $c_2$-distributions is to consider all ways of decorating a partial local genus symbol to  give a local genus symbol with the specified normalized unit determinant squareclass.  In terms of local integral invariants, this ``decoration'' happens by assigning oddities to each compartment and then by associating a sign to each train.

\begin{defn}
Given a local genus symbol $\G$, we have a natural map $\varphi: \G \mapsto \P$ where $\P$ is the partial genus symbol associated to $\G$ by forgetting about signs and oddities, and translating the remaining train/compartment structure into $\P$ by using \cite[Rem 5.2, p22]{Ha-masses_of_varying_det}.
Given $\P$, we say that any $\G$ with $\varphi(\G) = \P$ is a {\bf decoration} of $\P$.
We also have a distinguished subset of the decorations $\G \in \varphi^{-1}(\P)$ of $\P$ whose signs on each train are $+$, and we refer to these as the {\bf unsigned decorations} of $\P$ or as the {\bf unsigned local genus symbols} associated to $\P$.
\end{defn}

The next lemma shows that the sign decorations can be done independently of the oddity decorations, and that the effect of the sign decorations  are easy to understand.  This will allow us to reduce to first computing the $c_2$-distributions of these oddity decorations alone.

\begin{lem}[Simplifying sign decorations]  \label{Lem:Simplifying_sign_decorations}
For a given partial local genus symbol $\mathcal{P}$ at $p=2$, we consider all local genus symbols $\mathcal{G}$ of fixed normalized unit determinant $u$ that arise from decorating $\mathcal{P}$ with oddities and signs.  Then the oddity decorations of $\mathcal{P}$ depend only on the reduction $u'$ of $u \pmod 4$, and the overall sign (defined as the product of the signs of all trains) determines which lift of $u'$ in $(\Z/8\Z)^\times$ is obtained. Thus specifying the normalized unit determinant squareclass $u$ (with given mod 4 reduction $u'$) is equivalent to specifying the overall sign of $\mathcal{G}$.
\end{lem}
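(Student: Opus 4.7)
The plan is to express $u \in (\Z/8\Z)^\times$ as a product of block determinant contributions, and then show that (i) the mod 4 reduction of this product is determined by the oddity decoration alone (independent of signs), while (ii) flipping any train sign multiplies the product by $5 \bmod 8$. Since $5^2 \equiv 1 \pmod{8}$, the overall sign (product of train signs) then controls which of the two lifts of $u' := u \bmod 4$ in $(\Z/8\Z)^\times$ is realized, so the set of oddity decorations achieving a given $u$ depends only on $u'$, while the overall sign picks out the lift $u$.

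For step (i), I would use a $\Z_2$-diagonalization of each Jordan block to compute its unit determinant $d_i \in (\Z/8\Z)^\times$ modulo 4. Type II blocks contribute $d_i \equiv 3 \pmod{4}$ regardless of sign, since Remark \ref{Rem:type_II_diagonalizations} gives both $d_i = 7$ and $d_i = 3$ lying in the same residue class mod 4. For a Type I block of dimension $n$ with oddity $t$, writing the block as $\sum a_j x_j^2$ with $a_j \in \{1,3,5,7\}$, a direct computation gives $d_i \equiv 1 \pmod{4}$ iff $t \equiv n \pmod{4}$ (equivalently, the count of indices $j$ with $a_j \equiv 3 \pmod{4}$ is even). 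Passing from block oddities to compartment oddities, if a compartment has total dimension $N$ and total oddity $T$, the parity of the number of blocks with $d_i \equiv 3 \pmod{4}$ equals $(T-N)/2 \pmod{2}$, so $\prod d_i \pmod{4}$ depends only on $T$ and $N$. This shows $u \bmod 4$ is determined by the compartment oddities and the fixed Jordan block dimensions in $\mathcal{P}$, independent of any sign choices.

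For step (ii), I would verify that flipping a sign $\epsilon \to -\epsilon$ on a single Jordan block (with oddities held fixed) multiplies the block determinant by $5 \pmod{8}$. The Type II case is immediate from Remark \ref{Rem:type_II_diagonalizations}, since $7 \cdot 3^{-1} \equiv 5 \pmod{8}$. For Type I blocks, enumerating the admissible $(t, \epsilon)$ pairs via diagonalization confirms the same factor of 5 between paired configurations. Invoking the Conway--Sloane sign-walking rules of \cite[Ch.~15]{CS-book}, this per-block statement translates into a statement about train signs: the effect on $u \bmod 8$ depends only on the product of train signs, since signs within a single train can be walked around and cancel in pairs. The main obstacle will be the sign-walking bookkeeping: within a train one may redistribute signs and oddities among the Jordan blocks while preserving only the overall train sign and the compartment oddities, so the factor-of-5 claim must be verified as an invariant at the train/compartment level of the decoration rather than at the individual block level, and the interaction between sign-walking and the Type I admissibility constraint on $(t,\epsilon)$ must be handled carefully case-by-case against the twenty partial symbols of Lemma \ref{Lem:twenty_plgs}.
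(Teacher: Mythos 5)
Your proposal is correct and reaches the right conclusion, but it takes a substantially more computational route than the paper, and the step you flag as ``the main obstacle'' is in fact a non-issue. The paper's proof is a two-line observation: in the Conway--Sloane formalism the sign attached to a train is, by definition, the Kronecker symbol $\leg{d}{2}$ of the relevant unit determinant, so the overall sign of $\mathcal{G}$ is the value of the homomorphism $u \mapsto \leg{u}{2}$ on the normalized unit determinant; since $\leg{\cdot}{2}$ separates the two lifts of a class $u' \in (\Z/4\Z)^\times$ to $(\Z/8\Z)^\times$, specifying $u$ is the same as specifying $u'$ together with the overall sign. Your step (i) -- that $u \bmod 4$ is determined by the oddity decoration and the block dimensions, via $d_i \equiv (-1)^{(t-n)/2} \pmod 4$ for a type I block and $d_i \equiv 3 \pmod 4$ for a type II${}_2$ block -- is correct and actually makes explicit a point the paper's proof leaves implicit, so it is a worthwhile addition. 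Your step (ii), however, rederives the identity ``overall sign $= \leg{u}{2}$'' by tracking a factor of $5 \bmod 8$ through individual block sign flips and then appealing to sign-walking; this is where you overcomplicate matters. Both the overall sign (the product of the train signs, which is exactly what sign-walking preserves) and the determinant squareclass $u$ are invariants of the genus, so the relation between them can be read off from the definition of the sign as $\leg{\cdot}{2}$ and its multiplicativity across blocks -- no case-by-case verification against the twenty partial symbols of Lemma \ref{Lem:twenty_plgs} is required, and no admissibility analysis of which $(t,\epsilon)$ flips are realizable is needed (indeed, for a one-dimensional type I block no flip is realizable at all, but nothing in the lemma requires one to be). In short: your argument closes, but replacing step (ii) by the observation that the overall sign is the multiplicative character $u \mapsto \leg{u}{2}$ collapses the proof to the paper's.
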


\begin{proof}
This follows since the value of the (extended) Kronecker symbol
$$
\leg{u}{2} := 
\begin{cases}
+1 & \qquad \text{if $u \equiv \pm1 \pmod 8$,}\\
-1 & \qquad \text{if $u \equiv \pm3 \pmod 8$,}\\
\end{cases}
$$
exactly determines a unique $u \in (\Z/8\Z)^\times$ lifting $u' := u \pmod 4$, and the overall sign is the homomorphism from the normalized unit determinants in $(\Z/8\Z)^\times$ to $\{\pm1$\}  given by $u \mapsto \leg{u}{2}$.
\end{proof}

\smallskip
The next lemma tells us how the unit Hasse invariant is distributed as we vary over local genus symbols of fixed normalized unit determinant squareclass $u'$, unimodular block stucture and sign (taken to be $+$ in every train).  
This amounts to describing the $c_{2, unit}$ distribution for a partial local genus symbol $\mathcal P$ as we decorate it with oddities and fix its unit determinant $u'$ mod 4.

\begin{lem}[Fixed sign $c_2$-distributions] \label{Lem:c_2_unit_distributions}
Suppose that $F_\p = \Q_2$ and we are given $u' \in \SqCl(\Op^\times)$.  
Then the distribution of the unit Hasse invariants $c_{2, unit}$ across all local genera of (non-degenerate) $\Op$-valued ternary quadratic forms with normalized unit determinant squareclasses $u\equiv u'$ mod 4 is given by:

\medskip
\centerline{
\begin{tabular}{  c c || c   c  || c  c || c  c || c  c | }
 \multicolumn{2}{c||}{Multiplicity} & \multicolumn{2}{c||}{$I_1 \oplus I_1 \oplus I_1$}
     & \multicolumn{2}{c||}{$I_2 \oplus I_1$} & \multicolumn{2}{c||}{One $I_3$}
     & \multicolumn{2}{c|}{$II_2 \oplus I_1$} \\
 \multicolumn{2}{c||}{of $c_{2, unit}$ values} & \multicolumn{2}{c||}{Jordan blocks}
     & \multicolumn{2}{c||}{Jordan blocks} & \multicolumn{2}{c||}{Jordan block}
     & \multicolumn{2}{c|}{Jordan blocks}\\
\hline
 \multicolumn{2}{c||}{$c_{2, unit} = $} & \,\,$+1$ & $-1$ & \,\,$+1$ & $-1$ & \,\,$+1$ & $-1$ & \,\,$+1$ & $-1$ \\
\hline
\hline
\multirow{2}{*}{\text{Det}}  
& $u' \equiv 1_{(4)}$ & \,\,\,\,\,\,1 &  3 &  \,\,\,\,\,\,1 &  2  &  \,\,\,\,\,\,1 & 1 & \,\,\,\,\,\, 0 & 1 \\
& $u' \equiv -1_{(4)}$ & \,\,\,\,\,\,3 & 1 &  \,\,\,\,\,\,2 & 1  &  \,\,\,\,\,\,1 & 1  & \,\,\,\,\,\,1 & 0\\
\end{tabular}
}
\end{lem}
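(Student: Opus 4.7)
My plan is to reduce Lemma \ref{Lem:c_2_unit_distributions} to a direct combinatorial enumeration using Lemmas \ref{Lem:Hasse_invariant_structure} and \ref{Lem:Simplifying_sign_decorations}. First, by Lemma \ref{Lem:Simplifying_sign_decorations}, fixing $u' \pmod 4$ together with overall sign $+$ uniquely determines $u \in (\Z/8\Z)^\times$, so it suffices, for each Jordan block structure, to enumerate local genera with overall sign $+$ and the specified $u$, grouped by $c_{2,\mathrm{unit}}$.

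Next, by Lemma \ref{Lem:Hasse_invariant_structure}, $c_{2,\mathrm{unit}} = (a,b)_2(a,c)_2(b,c)_2$ where $a, b, c \in \{1, 3, 5, 7\}$ are the units in a $\Q_2$-diagonalization, and this depends only on $a, b, c \pmod 4$. Using $(u,v)_2 = (-1)^{(u-1)(v-1)/4}$ on units, one finds $c_{2,\mathrm{unit}} = (-1)^{\binom{k}{2}}$ where $k := \#\{i : a_i \equiv 3 \pmod 4\}$, while $u' \equiv (-1)^k \pmod 4$. Thus $k \in \{0,1,2,3\}$ is in bijection with the four pairs $(u' \bmod 4, c_{2,\mathrm{unit}})$, namely $(1,+), (-1,+), (1,-), (-1,-)$, and the problem reduces to counting local genera by $k$-value for each Jordan block structure.

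The enumeration proceeds case by case. For $I_1 \oplus I_1 \oplus I_1$, $I_2 \oplus I_1$, and $I_3$ the Jordan decomposition is already $\Z_2$-diagonal, so the mod-4 units are read directly from the oddity decorations of the Conway-Sloane symbol; collating over the partial genus symbols in each structure and accounting for the Jordan equivalences yields $4, 3, 2$ local genera per $u'$-class respectively, distributed by $k$ as tabulated. For $II_2 \oplus I_1$, I would $\Q_2$-diagonalize the $II_2$ block via Remark \ref{Rem:type_II_diagonalizations} into $\mathrm{diag}(-2, 2)$ or $\mathrm{diag}(2, 6)$; in either case exactly one of the two units is $\equiv 3 \pmod 4$, contributing $k_{\bar{2}} = 1$, so the total $k = 1 + k_{I_1}$ lies in $\{1, 2\}$.

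The main obstacle will be the $II_2 \oplus I_1$ case, where the sign constraints from Lemma \ref{Lem:Simplifying_sign_decorations} must be applied to each train of the partial symbols $(\bar{2}, 1), (\bar{2}::1), (1, \bar{2}), (1::\bar{2})$, interacting with the intrinsic signs of the two $II_2$ forms from Remark \ref{Rem:type_II_diagonalizations} and with the Kronecker sign of the $I_1$ unit. After all these constraints are imposed, exactly one local genus per $u'$-class survives, yielding the tabulated multiplicities $(0, 1)$ for $u' \equiv 1 \pmod 4$ (forced $k_{I_1} = 1$, giving $k = 2$ and $c_{2,\mathrm{unit}} = -1$) and $(1, 0)$ for $u' \equiv -1 \pmod 4$ (forced $k_{I_1} = 0$, giving $k = 1$ and $c_{2,\mathrm{unit}} = +1$).
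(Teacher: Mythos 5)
Your proposal is correct and follows essentially the same route as the paper's proof: fix every block sign to $+$, enumerate the oddity decorations of each of the four unimodular Jordan block structures, and read off $c_{2,\mathrm{unit}}$ from the units in a $\Q_2$-diagonalization (using Remark \ref{Rem:type_II_diagonalizations} for the type $\mathrm{II}_2$ block). The one genuine refinement is your invariant $k := \#\{i : a_i \equiv 3 \pmod{4}\}$ together with the identities $c_{2,\mathrm{unit}} = (-1)^{\binom{k}{2}}$ and $u' \equiv (-1)^k \pmod{4}$: this replaces the paper's block-by-block Hilbert-symbol product tables with a single count and makes the bijection between $k \in \{0,1,2,3\}$ and the four cells of each column of the table transparent. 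I checked that the resulting multiplicities agree with the stated table in all four block structures, including the $\mathrm{II}_2 \oplus \mathrm{I}_1$ case. One point to tighten: you reduce to genera of \emph{overall} sign $+$, but the enumeration you actually carry out (and the input that Lemma \ref{Lem:Adding_sign_decorations} requires) is over decorations in which \emph{every} train has sign $+$; for the multi-train structures these are different sets --- e.g.\ a sign configuration $(+,-,-)$ on $\mathrm{I}_1 \oplus \mathrm{I}_1 \oplus \mathrm{I}_1$ has overall sign $+$ but contributes additional genera, inflating the multiplicities by the number of such configurations --- so the hypothesis should be stated as ``all signs $+$'' rather than ``overall sign $+$.''
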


\begin{proof}
In preparation for the proof, we first enumerate the possible (unimodular) Jordan block representatives that can appear in a ternary quadratic form (up to integral equivalence), and describe their distributions of $c_{2, unit}$ invariants.  We will use these later to compute the distributions of Hasse invariants $c_2$ as we vary over all ways of decorating a partial local genus symbol of given determinant. 

\medskip
{\bf Block I${}_3$ Tables:}
$$
\begin{tabular}{  c c | c  c  c  c | }
\multicolumn{2}{c |}{Representative}  & \multicolumn{4}{c|}{\text{Oddity}}\\
\multicolumn{2}{c |}{Quadratic Forms}  & 1 & 3 & 5 & 7 \\
\hline
\multirow{2}{*}{\text{Sign}}  
& + & diag[1, 1, -1] & diag[1, 1, 1] & diag[-1, -1, -1] & diag[1, -1, -1] \\
& -- & diag[-1, -1, 3] & diag[1, -1, 3] & diag[1, 1, 3] & diag[1, 1, -3] \\
\end{tabular}
$$
$$
\begin{tabular}{  c c | c  c  c  c | }
 \multicolumn{2}{c|}{Values} & \multicolumn{4}{c|}{\text{Oddity}}\\
 \multicolumn{2}{c|}{of $c_{2, unit}$} & 1 & 3 & 5 & 7 \\
\hline
\multirow{2}{*}{\text{Sign}}  
& + & $+1$ &  $+1$ &  $-1$ &  $-1$ \\
& -- & $-1$ & $-1$ & $+1$ & $+1$ \\
\end{tabular}
\qquad\qquad
\begin{tabular}{  c c | c  c  c  c | }
 \multicolumn{2}{c|}{Values} & \multicolumn{4}{c|}{\text{Oddity}}\\
 \multicolumn{2}{c|}{of $\det$} & 1 & 3 & 5 & 7 \\
\hline
\multirow{2}{*}{\text{Sign}}  
& + & $7_{(8)}$ &  $1_{(8)}$ &  $7_{(8)}$ &  $1_{(8)}$ \\
& -- & $3_{(8)}$ & $5_{(8)}$ & $3_{(8)}$ & $5_{(8)}$ \\
\end{tabular}
$$
So here we see that the $c_{2, unit}$ values are equidistributed on the determinant level sets.

\medskip
{\bf Block I${}_2$ Tables:}
$$
\begin{tabular}{  c c | c  c | c  c | }
\multicolumn{2}{c |}{Representative} & \multicolumn{4}{c|}{\text{Oddity}}\\
\multicolumn{2}{c |}{Quadratic Forms} & 2 & -2 & 0 & 4 \\
\hline
\multirow{2}{*}{\text{Sign}}  
& + & diag[1, 1] & diag[-1, -1] & diag[1, -1] & -  \\
& -- & diag[-1, 3] & diag[1, -3] & - & diag[-1, -3] \\
\hline
& & \multicolumn{2}{c}{$\det = 1_{(4)}$} & \multicolumn{2}{|c|}{$\det = -1_{(4)}$}\\ 
\end{tabular}
$$

$$
\begin{tabular}{  c c | c  c  c  c | }
\multicolumn{2}{c |}{Values of} & \multicolumn{4}{c|}{\text{Oddity}}\\
\multicolumn{2}{c | }{$(a, b)_2$}  & 2 & -2 & 0 & 4 \\
\hline
\multirow{2}{*}{\text{Sign}}  
& + & $+1$ & $-1$ & $+1$ & -  \\
& -- & $-1$ & $+1$ & - & $+1$ \\
\hline
& & \multicolumn{2}{c|}{$\det = 1_{(4)}$} & \multicolumn{2}{c|}{$\det = -1_{(4)}$}\\ 
\end{tabular}
\qquad \qquad
\begin{tabular}{  c c | c  c  c  c | }
\multicolumn{2}{c |}{Values} & \multicolumn{4}{c|}{\text{Oddity}}\\
\multicolumn{2}{c | }{of $\det$}  & 2 & -2 & 0 & 4 \\
\hline
\multirow{2}{*}{\text{Sign}}  
& + & $1_{(8)}$ & $1_{(8)}$ & $7_{(8)}$ & -  \\
& -- & $5_{(8)}$ & $5_{(8)}$ & - & $3_{(8)}$ \\
\hline
& & \multicolumn{2}{c|}{$\det = 1_{(4)}$} & \multicolumn{2}{c|}{$\det = -1_{(4)}$}\\ 
\end{tabular}
$$

\medskip
{\bf Block I${}_1$ Table:}
$$
\begin{tabular}{  c c | c  c | c  c | }
\multicolumn{2}{c |}{Representative} & \multicolumn{4}{c|}{\text{Oddity $=$ Determinant}}\\
\multicolumn{2}{c |}{Quadratic Forms} & 1 & 5 & 3 & 7 \\
\hline
\multirow{2}{*}{\text{Sign}}  
& + & diag[1] & - & - &  diag[-1]  \\
& -- & - & diag[-3] & diag[3] & - \\
\hline
& & \multicolumn{2}{c}{$\det = 1_{(4)}$} & \multicolumn{2}{|c|}{$\det = -1_{(4)}$}\\ 
\end{tabular}
$$
For Type $\text{I}_1$ blocks we always have $c_2 = c_{2, unit} = 1$.

\medskip
{\bf Block II${}_2$ Table:}
$$
\begin{tabular}{  c c | c  c | c  c | }
\multicolumn{2}{c |}{Representative} & \multicolumn{4}{c|}{\text{Determinant}}\\
\multicolumn{2}{c |}{Quadratic Forms} & 1 & 5 & 3 & 7 \\
\hline
\multirow{2}{*}{\text{Sign}}  
& + & - & - & - &  $\begin{bmatrix} 0 & 1 \\ 1 & 0 \end{bmatrix}$  \\
& -- & - & - & $\begin{bmatrix} 2 & 1 \\ 1 & 2 \end{bmatrix}$ & - \\
\hline
& & \multicolumn{2}{c}{$\det = 1_{(4)}$} & \multicolumn{2}{|c|}{$\det = -1_{(4)}$}\\ 
\end{tabular}
$$
For Type II blocks the oddity is defined to be zero, and from the diagonal forms of these block over $\Q_2$ we can easily check that $c_2 = c_{2, unit} = -1$ for both of these blocks.


We now compute the table of multiplicities of $c_{2, unit}$ values appearing in the table of Jordan blocks assuming that all signs are $+$ (and so the overall unit determinant is $\pm1 \pmod 8$).  The four ways of writing a ternary form as a sum of scaled unimodular types have unimodular types  I${}_3$, I${}_2 \oplus$ I${}_1$, I${}_1 \oplus$ I${}_1 \oplus$ I${}_1$ and II${}_2 \oplus$ I${}_1$.

\medskip
{\bf Case I${}_3$:}  This part of the table follows directly from counting the number of  $c_2$ invariants (by value) of type $I_3$ blocks of fixed determinant $\pm1 \pmod 8$.

\medskip
{\bf Case I${}_2 \oplus$ I${}_1$:}  Here we compute the Hasse invariant $c_2$ for the direct sums of each of the possible oddities of each summand (both with $sign=+$) by the usual formula \cite[Lemma 2.3(iii), p58]{Ca}.  
\medskip
$$
\begin{tabular}{  c c | c  c | c  | }
\multicolumn{2}{c |}{$I_2 \oplus I_1$ } & \multicolumn{3}{c|}{\text{$I_2$ oddities}}\\
\multicolumn{2}{c |}{$c_{2, unit}$} & 2 & -2 & 0  \\
\hline
\multirow{2}{*}{\text{$I_1$ oddities}}  
& 1 & +1 & -1 & +1  \\
& 7 & +1 & -1 & -1  \\
\hline
\end{tabular}
\qquad \qquad
\begin{tabular}{  c c | c  c | c  | }
\multicolumn{2}{c |}{$I_2 \oplus I_1$ } & \multicolumn{3}{c|}{\text{$I_2$ oddities}}\\
\multicolumn{2}{c |}{$\det$} & 2 & -2 & 0  \\
\hline
\multirow{2}{*}{\text{$I_1$ oddities}}  
& 1 & $1_{(8)}$ & $1_{(8)}$ & $7_{(8)}$  \\
& 7 & $7_{(8)}$ & $7_{(8)}$ & $1_{(8)}$  \\
\hline
\end{tabular}$$

\medskip
{\bf Case I${}_1 \oplus$ I${}_1 \oplus$ I${}_1$:}  Here we compute the Hasse invariant $c_2$ for the direct sums of each of the possible oddities of each summand (both with $sign=+$) by the usual formula \cite[Lemma 2.3(iii), p58]{Ca}, in two steps.  
\medskip
$$
\begin{tabular}{  c c | c  c |  }
\multicolumn{2}{c |}{$I_1 \oplus I_1$ } & \multicolumn{2}{c|}{\text{$I_1$ oddities}}\\
\multicolumn{2}{c |}{$c_{2, unit}$} & 1 & 7   \\
\hline
\multirow{2}{*}{\text{$I_1$ oddities}}  
& 1 & +1 & +1   \\
& 7 & +1 & -1   \\
\hline
\end{tabular}
\qquad \qquad
\begin{tabular}{  c c | c  c | }
\multicolumn{2}{c |}{$I_1 \oplus I_1$ } & \multicolumn{2}{c|}{\text{$I_1$ oddities}}\\
\multicolumn{2}{c |}{$\det$} & 1 & 7   \\
\hline
\multirow{2}{*}{\text{$I_1$ oddities}}  
& 1 & $1_{(8)}$ & $7_{(8)}$  \\
& 7 & $7_{(8)}$ & $1_{(8)}$  \\
\hline
\end{tabular}$$

\medskip
$$
\begin{tabular}{  c c | c  c | c  c | }
\multicolumn{2}{c |}{$(I_1 \oplus I_1) \oplus I_1$ } & \multicolumn{4}{c|}{\text{$I_1$ oddities}}\\
\multicolumn{2}{c |}{$c_{2, unit}$} & \multicolumn{2}{c|}{1} & \multicolumn{2}{c|}{7}   \\
\hline
\multirow{2}{*}{\text{$I_1 \oplus I_1$ as above}}  
&  & +1 & +1 & +1 & -1 \\
&  & +1 & -1  & -1 & -1 \\
\hline
\end{tabular}
\qquad \qquad
\begin{tabular}{  c c | c  c | c  c | }
\multicolumn{2}{c |}{$(I_1 \oplus I_1) \oplus I_1$ } & \multicolumn{4}{c|}{\text{$I_1$ oddities}}\\
\multicolumn{2}{c |}{$\det$} & \multicolumn{2}{c|}{1} & \multicolumn{2}{c|}{7}   \\
\hline
\multirow{2}{*}{\text{$I_1 \oplus I_1$ as above}}  
&  & $1_{(8)}$ & $7_{(8)}$  & $7_{(8)}$ & $1_{(8)}$ \\
&  & $7_{(8)}$ & $1_{(8)}$  & $1_{(8)}$ & $7_{(8)}$ \\
\hline
\end{tabular}
$$

\medskip
{\bf Case II${}_2 \oplus$ I${}_1$:}  Here we again compute the Hasse invariant $c_{2, unit}$ for the direct sums of each of the possible oddities of each summand (both with $sign=+$) by the usual formula \cite[Lemma 2.3(iii), p58]{Ca}.  
There are only two forms $diag[-2,2,1]$ and $diag[-2,2,-1]$ in this case,  which have $c_{2,unit} = 1$ and $c_{2,unit} = -1$ respectively.
\end{proof}

At this point we need only describe how the $c_2$-distributions change as we further decorate our unsigned local genus symbol with signs, which determines the contribution of the valuation adjustment.

\begin{lem}[Adding sign decorations]  \label{Lem:Adding_sign_decorations}
The distribution of the Hasse invariants $c_{2}$ can be computed from the $c_{2, unit}$ distribution by the following rules:
\begin{enumerate}
\item Determine the appropriate overall sign $sign(u)$ for the of genus symbol, 

\item If the overall sign is $-1$, take the opposite $c_{2, unit }$-distribution from Lemma \ref{Lem:c_2_unit_distributions} if the valuation adjustment depends on all of the trains, otherwise take the original $c_{2, unit }$-distribution,

\item Replace this distribution by its average (constant) distribution if the valuation adjustment doesn't depend on all trains.

\item Scale this distribution by $2^{(\text{\# of trains}) - 1}$ to obtain the desired $c_2$-distribution.
\end{enumerate}
\end{lem}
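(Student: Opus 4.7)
The plan is to reduce the $c_2$-distribution to the $c_{2,unit}$-distribution computed in Lemma \ref{Lem:c_2_unit_distributions} by tracking how the valuation adjustment from Lemma \ref{Lem:Hasse_invariant_structure} interacts with the sign decorations of the trains. By that lemma, for any diagonalization we have $c_2 = c_{2,unit} \cdot \mathrm{VA}$ where the valuation adjustment $\mathrm{VA}$ is a single Hilbert symbol $(2, X)_2$ with $X$ depending on at most two of the diagonal units $a,b,c$, and hence on at most two trains. The total $c_2$-distribution is then the sum over oddity decorations compatible with $u' \equiv u \pmod 4$ of the distribution of $\mathrm{VA}$ values across the admissible sign decorations, weighted by $c_{2,unit}$.

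For Step (1), I would invoke Lemma \ref{Lem:Simplifying_sign_decorations} directly: fixing the normalized unit determinant squareclass $u \in (\Z/8\Z)^\times$ with mod $4$ reduction $u'$ is equivalent to fixing the overall sign $\mathrm{sign}(u) = \leg{u}{2}$, while the compatible oddity decorations depend only on $u'$. For Step (4), on $n_{\mathrm{tr}}$ trains the number of sign assignments $(\epsilon_1, \dots, \epsilon_{n_{\mathrm{tr}}})$ with prescribed product $\prod_t \epsilon_t = \mathrm{sign}(u)$ is exactly $2^{n_{\mathrm{tr}} - 1}$, which accounts for the scaling factor (since for any oddity decoration the value of $c_{2,unit}$ is constant across these sign choices).

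For Step (3), when $\mathrm{VA}$ is independent of at least one train, flipping that ``free'' train's sign toggles the admissible sign pattern between the two cosets of $\{\vec\epsilon : \prod \epsilon_t = \pm 1\}$ while leaving $\mathrm{VA}$ unchanged and, by pairing, also leaving $c_{2,unit}$ unchanged; pairing off the admissible sign patterns in this way forces the $\mathrm{VA}$-values to appear in equal numbers across the two cosets, and the resulting $c_2$-distribution is the constant (averaged) one. For Step (2), when $\mathrm{VA}$ depends on all trains, flipping individual train signs changes $\mathrm{VA}$ by explicit $\pm 1$ factors governed by which trains carry the units in $X$; tracking this carefully and comparing the admissible sign patterns for overall sign $+1$ versus $-1$ shows that passing from $\mathrm{sign}(u) = +1$ to $\mathrm{sign}(u) = -1$ exactly swaps the $c_2 = \pm 1$ counts, which is the ``take the opposite distribution'' assertion.

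The main obstacle will be verifying Steps (2) and (3) uniformly across the twenty partial local genus symbols of Lemma \ref{Lem:twenty_plgs}: the statement ``$\mathrm{VA}$ depends on a train'' is determined by which unimodular Jordan blocks are free (unbound by adjacent blocks in a compartment) and which units from those blocks survive into the Hilbert symbol $(2, X)_2$ after canonical reductions, and this analysis is sensitive to the exact train/compartment separators (``$,$'', ``$;$'', ``$::$'') and to the presence of type-II${}_2$ blocks, whose $\Q_2$-diagonalizations (Remark \ref{Rem:type_II_diagonalizations}) contribute their own units to $X$. I expect the bookkeeping to be mechanical once one organizes the twenty cases by Jordan block structure, but the case-by-case verification that $\mathrm{VA}$ really does or does not depend on each individual train is where the care is required.
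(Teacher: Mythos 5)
Your overall strategy matches the paper's: factor $c_2 = c_{2,\mathrm{unit}}\cdot(\text{valuation adjustment})$ via Lemma \ref{Lem:Hasse_invariant_structure}, use Lemma \ref{Lem:Simplifying_sign_decorations} to identify fixing $u$ with fixing the overall sign, count the sign patterns of prescribed product to get the factor $2^{(\#\text{trains})-1}$, and handle Steps (2)--(3) by sign-flip pairing arguments. However, you are missing the one structural observation that makes the paper's proof short and uniform: when evaluated on the canonical block representatives, the valuation adjustment \emph{equals the product of the signs of the trains on which it depends} (in particular it is $+1$ when all signs are $+$). Once this is in hand, Steps (2) and (3) follow by pure group-theoretic reasoning about sign vectors, and the twenty-case bookkeeping over the partial local genus symbols that you anticipate as ``the main obstacle'' is not needed at all. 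Deferring the argument to that case analysis is not wrong, but it is substantially more work than the paper's route and leaves Steps (2)--(3) only sketched.

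More seriously, your Step (3) argument has a logical gap. You pair each admissible sign pattern with the pattern obtained by flipping one ``free'' train (one the valuation adjustment does not depend on); this preserves the valuation adjustment but \emph{changes the overall sign}, so it only shows that the two cosets $\{\vec\epsilon:\prod\epsilon_t=+1\}$ and $\{\vec\epsilon:\prod\epsilon_t=-1\}$ carry the same distribution of valuation-adjustment values. That does not imply the averaging claimed in Step (3): if, say, the valuation adjustment were constantly $+1$ on both cosets, your pairing would be consistent with a completely unaveraged $c_2$-distribution. What the averaging actually requires is that \emph{within} the fixed-overall-sign coset the valuation adjustment takes the values $\pm1$ equally often, and the correct pairing for that flips \emph{two} signs simultaneously --- one on a train the valuation adjustment depends on (toggling it) and one on a train it does not depend on (preserving it) --- which keeps the overall sign fixed while switching the valuation adjustment. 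This is exactly the pairing the paper uses, and it is also where the hypothesis ``the valuation adjustment does not depend on all trains'' enters: it guarantees the existence of the second, compensating flip.
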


\begin{proof}
In Step 1 we know that $sign(u)$ is the same as the sign of the overall sign of the genus symbol by definition of the overall sign.  

In Step 2, if the valuation adjustment depends on all trains then any change of sign (in any train) will change the  valuation adjustment, switching the $c_{2, unit}$ distribution.  Otherwise, we can change the sign in a train that does not affect the valuation adjustment, preserving the $c_{2, unit}$ distribution.  We also can check that if all signs of all Jordan blocks are $+1$ then the valuation adjustment is $+1$ because it is the product of the signs of the trains on which it depends.

In Step 3, we notice that further sign changes must occur in pairs to preserve the sign, and that there will be a pair switching the valuation adjustment iff there is a train that the valuation adjustment does not depend on (because we can choose exactly one sign change that the valuation adjustment depends on).

Step 4 gives the total number of sign changes we can get by switching pairs of signs, which is all possible ways of obtaining inequivalent local genus symbols, since the overall sign is fixed.
\end{proof}

\begin{thm}
Suppose that $F_\p = \Q_2$ and that $S_\p \in \SqFpInt$ in given.  Then the distribution of Hasse invariants $c_2(Q)$, normalized local densities $(1- \frac{1}{2^2})\cdot\beta_{2, Q}^{-1}(Q)$, and contributions to $A^*_\p(S_\p)$ and $B^*_\p(S_\p)$ for the local genera $\G$ with $\det_G(\G) = S_\p$ decorating each of the 20 partial local genus symbols $\P$ of Lemma \ref{Lem:twenty_plgs} are given in Tables 5.1--5.3.
\end{thm}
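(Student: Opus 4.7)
The plan is to assemble this theorem as a systematic bookkeeping exercise, applying the four preparatory lemmas (\ref{Lem:Local_density_dependence}, \ref{Lem:Hasse_invariant_structure}, \ref{Lem:Simplifying_sign_decorations}, \ref{Lem:c_2_unit_distributions}, and \ref{Lem:Adding_sign_decorations}) to each of the 20 partial local genus symbols $\P$ from Lemma \ref{Lem:twenty_plgs} in turn. Since the three output columns of Tables 5.1--5.3 have essentially independent computations, I would split the proof into three parallel sub-tasks and then recombine at the end.

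First I would handle the normalized local density column. For each $\P$, Lemma \ref{Lem:Local_density_dependence} already tells us exactly which oddity/sign data the density $\beta_{Q,2}(Q)$ depends on; for 14 of the 20 symbols (those with a `,' separator) the density is a single number determined purely by $\P$, while for the remaining six we must carry along an extra discrete parameter (the $I_3$-octane, an $I_2$-oddity, or a $II_2$-sign). For each case I would write down the Jordan decomposition explicitly, read off the dimensions $n_i$ and spacings $\al_j - \al_i$, and apply the local mass formula of Lemma \ref{lemma:local_mass_formula} (suitably extended from Conway–Sloane to the $\p\mid 2$, $F_\p=\Q_2$ case) to get $\beta_{Q,2}(Q)^{-1}$, then multiply by the normalizing factor $(1-1/4)$.

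Next I would handle the $c_2$-distribution column. Here the strategy is: for a given normalized unit determinant squareclass $u \in \SqCl(\Op^\times)$ with reduction $u' \pmod 4$, enumerate all legal decorations of $\P$ by oddities (compartment by compartment) and by train signs. Lemma \ref{Lem:Simplifying_sign_decorations} lets me pass from fixing $u$ to fixing $u'$ plus an overall sign constraint, reducing the bookkeeping. I then apply Lemma \ref{Lem:c_2_unit_distributions} to get the $c_{2,\mathrm{unit}}$ distribution for each unimodular block structure, and run the four-step algorithm of Lemma \ref{Lem:Adding_sign_decorations} (locate overall sign, possibly swap the $\pm$ tallies, average if the valuation adjustment is degenerate in a train, then scale by $2^{\#\mathrm{trains}-1}$) to get the honest $c_2$-distribution. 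The only subtle inputs here are: identifying which Hilbert symbol appears in the valuation adjustment $(2,\cdot)_2$ via Lemma \ref{Lem:Hasse_invariant_structure}, and (for symbols with a $\bar{2}$ block) using the explicit $\Q_2$-diagonalizations of Remark \ref{Rem:type_II_diagonalizations} so the Hasse-invariant formulas apply.

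Finally, the $A^*_\p(S_\p)$ and $B^*_\p(S_\p)$ entries are essentially weighted sums of the normalized local densities against the $c_2$-distribution just computed, following the definitions of $A^*_{\p;n}$ and $B^*_{\p;n}$ from \cite{Ha-masses_of_varying_det}: $A^*$ pools over both values of $c_2$, while $B^*$ is the signed difference. With both earlier columns in hand this step is purely arithmetic. The main obstacle is bureaucratic rather than conceptual: the 20 partial symbols split into several substantially different subcases (by whether any $\bar{2}$ appears, by how many trains there are, by the mod-4 class of $u$, and by which train the valuation adjustment touches), so the proof reduces to a careful, repetitive tabulation where the chief risk is book-keeping error. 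I would structure the writeup by grouping the 20 symbols into families sharing a Jordan block structure (as in Lemma \ref{Lem:c_2_unit_distributions}), walking through one representative of each family in detail and then giving the tables for the remainder, and verifying the final totals against the software checks of \cite{Ha-Sage-Dirichlet, Ha-Sage-Tables, Ha-Sage-Mass}.
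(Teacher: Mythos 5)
Your proposal follows essentially the same route as the paper: enumerate all decorations of each of the 20 partial symbols, compute the $c_2$-distributions via Lemmas \ref{Lem:Hasse_invariant_structure}, \ref{Lem:Simplifying_sign_decorations}, \ref{Lem:c_2_unit_distributions} and \ref{Lem:Adding_sign_decorations} (together with Remark \ref{Rem:type_II_diagonalizations} for the $\bar{2}$ blocks), compute the normalized densities, and then combine the two via Lemma \ref{Lem:Local_density_dependence} to obtain the $A^*_\p$ and $B^*_\p$ entries. One caveat: for the density column you propose applying Lemma \ref{lemma:local_mass_formula} ``suitably extended'' to $\p\mid 2$, but that lemma is stated and proved only for $\p\nmid 2$, and the genuine $2$-adic mass formula is not a mild extension of it --- the diagonal factors at $2$ are governed by the species of each Jordan block (hence by signs and oddities, which is precisely why Lemma \ref{Lem:Local_density_dependence} is needed), together with the type-I/type-II and bound/free corrections. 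The paper sidesteps this by citing the Conway--Sloane $2$-adic density formula directly; you should do the same rather than reuse the odd-prime formula.
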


\begin{proof}
For each partial local genus symbol $\P$ we consider all decorations $\G$ of $\P$ with specified $\p$-power scalings for each Jordan block.  We use Remark \ref{Rem:type_II_diagonalizations} with Lemmas \ref{Lem:Hasse_invariant_structure} and \ref{Lem:Simplifying_sign_decorations}--\ref{Lem:Adding_sign_decorations} to compute the distribution of Hasse invariants over these local genera $\G$.  Then we use \cite[eq(2), p263]{CS} to compute the normalized local densities $(1- \frac{1}{2^2})\cdot\beta_{2, \G}^{-1}(\G)$ for these local genera.  Finally, to compute the contributions of these $\G$ to $A^*_\p(S_\p)$ and $B^*_\p(S_\p)$
we apply Lemma \ref{Lem:Local_density_dependence} to correlate the 
$c_2$-distributions and local densities already computed.
\end{proof}

\vfil

{\bf Editorial  Note:} The following page is left blank due to the table formatting in the following section.




\section{Tables for ternary $\p\mid2$ local computations when $F_\p = \Q_2$} \label{Sec:p=2_tables}

\TableHasseOne

\TableHasseTwo

\TableMassOne

\TableMassTwo

\TableEulerOne

\TableEulerTwo






\section{The ternary Euler factors for $A^*_{\p}$ and $B^*_{\p}$ for $\p\mid 2$ when $F_\p= \Q_2$} \label{Section:p_even}

In this section we assemble the computations in the previous section of $A^*_{\p; n=3}(S)$ and $B^*_{\p; n=3}(S)$ when $\p\mid 2$ and $F_\p= \Q_2$ to explicitly compute the formal local Dirichlet series described in \cite[Thrm 5.4, pp22-23]{Ha-masses_of_varying_det}.


\begin{lem} \label{Lem:A_u_even_prime_formula}
Suppose that $\p \mid 2$, $F_\p = \Q_2$, and we fix some 
$u \in \SqCl(\Op^\times)$.  Then
$$
\sum_{
	\substack{S_\p \in \SqFpInt \\ \text{with $\widetilde{S_\p}= 2u$}}
	}
	\frac{A^*_{\p; n=3}(S_{\p})}{N_{F/\Q}(\I(S_{\p}))^{s}}
= \frac{(1 - 2^{-(3s+6)}) \cdot 2^{-s}}
{(1-2^{-(s+1)})(1-2^{-(2s+3)})}
$$
as an equality of formal Dirichlet series.
\end{lem}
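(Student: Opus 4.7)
The plan is to mirror the approach of Lemma~\ref{Lem:fixed_u_odd_A-series}, replacing the four-case Jordan-block analysis for odd primes by the twenty-case partial local genus symbol analysis from Lemma~\ref{Lem:twenty_plgs} and Tables~5.1--5.3 of Section~\ref{Sec:p=2_tables}. Setting $X := 2^{-s}$, I would decompose the target Dirichlet sum
$$
\sum_{\substack{S_\p \in \SqFpInt \\ \widetilde{S_\p} = 2u}} \frac{A^*_{\p; n=3}(S_\p)}{N_{F/\Q}(\I(S_\p))^s} \;=\; \sum_{S_\p} A^*_\p(S_\p)\, X^{\ord_\p(S_\p)}
$$
into twenty subseries, one indexed by each partial local genus symbol $\P$ from Lemma~\ref{Lem:twenty_plgs}. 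For each such $\P$, the contributions of its decorations to $A^*_\p(S_\p)$ are recorded in Tables~5.1--5.3, and the constraint $\widetilde{S_\p} = 2u$ selects the allowed decorations via the overall-sign analysis of Lemma~\ref{Lem:Simplifying_sign_decorations} together with the valuation-adjustment bookkeeping of Lemma~\ref{Lem:Adding_sign_decorations}.

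Next, I would evaluate each of the twenty subseries in closed form. Most will reduce to ordinary geometric series in $X$ indexed by the free $\p$-power scalings of the Jordan blocks. The nine subseries coming from the $(1,1,1)$-type symbols (three unimodular blocks) will give rise to restricted double sums, which should be computable by the Ferrer-diagram identity used in the proof of Lemma~\ref{Lem:fixed_u_odd_A-series} (equation~\ref{Eq:Ferrer_double_sum}), modified as needed to accommodate the parity restrictions coming from the separators and overbars in $\P$. Collecting the twenty closed-form subseries and clearing denominators, I would then verify -- in the spirit of the cancellations at the end of the proof of Lemma~\ref{Lem:fixed_u_odd_A-series} -- that the total simplifies to the claimed rational function
$\frac{(1 - 2^{-(3s+6)}) \cdot 2^{-s}}{(1 - 2^{-(s+1)})(1 - 2^{-(2s+3)})}$.

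A useful consistency check is that this answer equals precisely $2^{-s}$ times the odd-prime formula of Lemma~\ref{Lem:fixed_u_odd_A-series} specialized to $q = 2$. The extra factor of $2^{-s}$ should be interpretable as a shift in the smallest admissible valuation of $S_\p$ compatible with $\widetilde{S_\p} = 2u$: at $\p \nmid 2$ one may have $\ord_\p(S_\p) = 0$, whereas at $\p = 2$ the convention $\pi_\p = 2$ forces the smallest achievable valuation to be $\nu = 1$. One should also check that, as in Lemma~\ref{Lem:fixed_u_odd_A-series} and Lemma~\ref{Lem:Explicit_Odd_Euler_factors}, the answer is independent of the specific choice of unit squareclass $u$.

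The main obstacle is combinatorial rather than conceptual. The twenty-case enumeration is considerably more delicate than the four-case odd-prime argument, and the correct accounting of overall signs (Lemma~\ref{Lem:Simplifying_sign_decorations}), valuation adjustments (Lemma~\ref{Lem:Adding_sign_decorations}), and parity restrictions within the nine $(1,1,1)$-type Ferrer sums is where errors are most likely to creep in. Beyond that, no new analytic ideas are required: the proof should consist of organizing the twenty table entries into geometric and Ferrer-type series and verifying the algebraic simplification.
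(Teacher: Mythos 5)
Your proposal matches the paper's proof in all essentials: the paper reads the twenty contributions off Table 5.3 (noting, as you do, that $A^*_\p(S_\p)$ is independent of the unit squareclass $u$, which trivializes the constraint $\widetilde{S_\p}=2u$), writes each as a monomial, geometric series, or Ferrer-type double sum in $X = 2^{-s}$, and verifies the algebraic collapse to the stated rational function after accounting for the Gram/Hessian valuation shift. Your consistency check --- that the answer is $2^{-s}$ times the odd-prime formula at $q=2$ --- is exactly the right sanity test, and the only caveat is the one you already flag: the work is in the bookkeeping, not in any new idea.
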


\begin{proof}
Since the generic local density at $\p$ is $\beta_{n=3, \p}(2)^{-1} = \frac{4}{1 - \tfrac{1}{2^2}}$, from Table 5.3 we see that the quantities $A^*_{\p}(S_{\p})$ are independent of $u \in \SqCl(\Op^\times) \cong (\Z/8\Z)^\times$, so 
the series  
$4 \cdot \sum_\nu
A^*_{\p}(S_\p)X^{\nu-3}$ with $X := q^{-s}$ and $\nu := \ord_\p(\I(S_\p))$ 
for the Gram determinant (having valuation $\nu-3$)
is given by
\begin{align*}
4 \cdot \sum_{\nu=0}^\infty A^*_{\p}(S_\p)X^{\nu-3}
& = 
2^{-1} 
+ 2^{-4} \cdot 3 \cdot X 
+ 2^{-5} \cdot 3 \cdot X^2 
+ \sum_{b\geq 3} 2^{-b-3} \cdot 3 \cdot X^b 
+ 2^{2} \cdot X^{-2}
\\
&  \qquad
+ \sum_{b\geq 1} 2^{2-b} \cdot X^{b-2} 
+ 2^{-6} \cdot 3 \cdot X^2
+ 2^{-9} \cdot 3 \cdot X^4
+ \sum_{2b\geq 6} 2^{-3b-3} \cdot 3 \cdot X^{2b} 
+ 2^{-6}  \cdot X^2
\\
& \qquad
+ \sum_{2b\geq 4} 2^{-3b-3} \cdot X^{2b} 
+ 2^{-7} \cdot 3 \cdot X^3
+ 2^{-8} \cdot 3 \cdot X^4
+ 2^{-10} \cdot 3 \cdot X^5
+ 2^{-11} \cdot 3 \cdot X^6
\\
& \qquad
+ \sum_{c\geq 4} 2^{-c-5} \cdot 3 \cdot X^{c+1}
+ \sum_{2b+1\geq 7} 2^{-3b-4} \cdot 3 \cdot X^{2b+1}  
+ \sum_{2b+2\geq 8} 2^{-3b-5} \cdot 3 \cdot X^{2b+2} 
\\
&  \qquad
+ \sum_{c+2\geq 7} 2^{-c-7} \cdot 3 \cdot X^{c+2} 
+ \sum_{\substack{b+c\geq 9\\ b\geq 3, c \geq b+3}} 2^{-2b-c-3} \cdot 3 \cdot X^{b+c} 
\\
& = 
2^{-1} 
+ 2^{-4} \cdot 3 \cdot X 
+ 2^{-5} \cdot 3 \cdot X^2 
+ \frac{2^{-6} \cdot 3 \cdot X^3}{(1 - \frac{X}{2})}
+ 2^{2} \cdot X^{-2}
\\
&  \qquad
+ \frac{2 \cdot  X^{-1}}{(1 - \frac{X}{2})}
+ 2^{-6} \cdot 3 \cdot X^2
+ 2^{-9} \cdot 3 \cdot X^4
+ \frac{2^{-12} \cdot 3 \cdot X^6}{(1 - \frac{X^2}{8})}
+ 2^{-6}  \cdot X^2
\\
& \qquad
+ \frac{2^{-9} \cdot X^4}{(1 - \frac{X^2}{8})}
+ 2^{-7} \cdot 3 \cdot X^3
+ 2^{-8} \cdot 3 \cdot X^4
+ 2^{-10} \cdot 3 \cdot X^5
+ 2^{-11} \cdot 3 \cdot X^6
\\
& \qquad
+ \frac{2^{-9} \cdot 3 \cdot X^5}{(1 - \frac{X}{2})}
+ \frac{2^{-13} \cdot 3 \cdot X^7}{(1 - \frac{X^2}{8})}
+ \frac{2^{-14} \cdot 3 \cdot X^8}{(1 - \frac{X^2}{8})}
+ \frac{2^{-12} \cdot 3 \cdot X^7}{(1 - \frac{X}{2})}
+ \frac{2^{-15} \cdot 3 \cdot X^9}{(1 - \frac{X}{2})(1 - \frac{X^2}{8})}
\\
& = \frac{(1 - \frac{X^3}{64}) (\frac{X}{2})^{-2}}{(1 - \frac{X}{2})(1 - \frac{X^2}{8})}.
\\
\end{align*}
Now substituting $X = 2^{-s}$ and multiplying by $X^3$ gives the desired Dirichlet series.
\end{proof}


\begin{lem} \label{Lem:B_u_even_prime_formula}
Suppose that $\p \mid 2$, $F_\p = \Q_2$, and we fix some 
$u \in \SqCl(\Op^\times)$.  Then
$$
\sum_{
	\substack{S_\p \in \SqFpInt \\ \text{with $\widetilde{S_\p}= 2u$}}
	}
	\frac{B^*_{\p; n=3}(S_{\p})}{N_{F/\Q}(\I(S_{\p}))^{s}}
=
\frac{-\ve_u \cdot (1 +  2^{-(s+2)} +  2^{-(2s+4)}) \cdot 2^{-s}}
{(1-2^{-(2s+2)})}
$$
as an equality of formal Dirichlet series, where $\ve_u := (-1,u)_\p$.
\end{lem}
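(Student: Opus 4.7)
The plan is to mirror the proof of Lemma~\ref{Lem:A_u_even_prime_formula} almost step-by-step, now using the $B^*_\p$ column of the tables in Section~\ref{Sec:p=2_tables} in place of the $A^*_\p$ column. As in that proof I would set $X := 2^{-s}$ and $\nu := \ord_\p(\I(S_\p))$, rescale by the generic local density $\beta_{n=3,\p}(2)^{-1} = 4/(1 - 2^{-2})$, and express the Dirichlet series as a formal power series in $X$.

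The crucial structural difference from the A-series computation is that $B^*_\p(S_\p)$ depends on the normalized squareclass $u$ through the Hilbert symbol $\ve_u = (-1,u)_\p$. The preliminary step is to verify that every nonzero contribution to $B^*_\p(S_\p)$ factors uniformly as $-\ve_u$ times a $u$-independent quantity, so that $-\ve_u$ pulls out of the entire Dirichlet sum. This factorization should follow from combining Lemma~\ref{Lem:c_2_unit_distributions} (which describes the $c_{2,\mathrm{unit}}$-distribution across oddity decorations of a partial local genus symbol at fixed overall sign), Lemma~\ref{Lem:Simplifying_sign_decorations} (which pins down the lift of $u \bmod 4$ to $u \in (\Z/8\Z)^\times$ via the overall sign of the genus), and Lemma~\ref{Lem:Adding_sign_decorations} (which describes how subsequent sign decorations either flip or average the unit Hasse distribution). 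Since $\widetilde{S_\p} = 2u$ fixes the overall sign, the signed count implicit in $B^*_\p$ absorbs its entire $u$-dependence into the single prefactor $-\ve_u$.

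With $-\ve_u$ extracted, the rest is bookkeeping analogous to Lemma~\ref{Lem:A_u_even_prime_formula}: enumerate contributions from each of the 20 partial local genus symbols as geometric series in $X$ with denominators among $\{1-X/2,\, 1+X/2,\, 1-X^2/4\}$, and combine over a common denominator. I anticipate the numerator will collapse into a multiple of $(1 - X^3/64) = (1-X/4)(1 + X/4 + X^2/16)$, the first factor cancelling part of the denominator to leave $-\ve_u \cdot X \cdot (1 + X/4 + X^2/16)/(1-X^2/4)$. Substituting $X = 2^{-s}$, so $X/4 = 2^{-(s+2)}$, $X^2/16 = 2^{-(2s+4)}$, and $X^2/4 = 2^{-(2s+2)}$, then yields the stated formula.

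The main obstacle will be the careful sign bookkeeping required to establish the uniform $-\ve_u$ factorization, since the signed $c_2$-distribution across the 20 partial local genus symbols must conspire to yield a single global sign (rather than producing mixed terms dependent on the specific partial local genus symbol or on the mod-$4$ reduction of $u$). Once this clean factorization is in hand, the subsequent algebraic simplification is routine but lengthy, mirroring the telescoping that produced the $(1 - X^3/64)$ factor in Lemma~\ref{Lem:A_u_even_prime_formula}.
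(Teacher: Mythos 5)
Your proposal follows the paper's proof essentially verbatim: the paper likewise rescales by the generic density $\beta_{n=3,\p}(2)^{-1}=4/(1-2^{-2})$, reads the $B^*_\p$ contributions off Table 5.3 (where the uniform $-\ve_u$ prefactor you describe is already built in, and which the paper justifies via Lemmas \ref{Lem:Simplifying_sign_decorations}--\ref{Lem:Adding_sign_decorations} and, more structurally, the scaling remark after the lemma), sums the resulting geometric series, and combines over a common denominator. The only inaccuracy is in your anticipated algebra: because the surviving $B^*_\p$ contributions are supported on even-valuation Jordan data, the geometric series have denominators $1-X^2/4$ and $1-X^4/64$ (not $1\pm X/2$), and the cancellation that occurs is of the factor $1-X^4/64$ — the combined numerator is a multiple of $(1+X/4+X^2/16)(1-X^4/64)$, not of $1-X^3/64$, and $1-X/4$ divides nothing in the denominator. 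This misprediction is harmless, since carrying out the bookkeeping as you describe still collapses to the stated $-\ve_u\,X\,(1+X/4+X^2/16)/(1-X^2/4)$.
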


\begin{proof}
Since the generic local density at $\p$ is $\beta_{n=3, \p}(2)^{-1} = \frac{4}{1 - \tfrac{1}{2^2}}$,
from Table 5.3 the series  
$4 \cdot \sum_\nu
B^*_{\p}(S_\p)X^{\nu-3}$ with $X := q^{-s}$ and $\nu := \ord_\p(\I(S_\p))$ 
for the Gram determinant (having valuation $\nu-3$)
is given by
\begin{align*}
4 \cdot \sum_{\nu=0}^\infty B^*_{\p}(S_\p)X^{\nu-3}
& = 
-\ve_u \cdot 2^{-2}
+ 0
-\ve_u \cdot 2^{-6} \cdot 3 \cdot X^2
%
+ 
\sum_{b\geq 3, \text{$b$ even}} -\ve_u \cdot 2^{-b-4} \cdot 3 \cdot X^b 
-\ve_u \cdot 2^{2} \cdot X^{-2}
\\
&  \qquad
%
+ \[ 
\sum_{b\geq 1, \text{ even}} -\ve_u \cdot 2^{2-b} \cdot X^{b-2} 
+ \sum_{b\geq 1, \text{odd}} -\ve_u \cdot 2^{1-b} \cdot X^{b-2} 
\]
+ 0
\\
&  \qquad
-\ve_u \cdot 2^{-10} \cdot 3 \cdot X^4
+ \sum_{2b\geq 6, \text{$b$ even}} -\ve_u \cdot 2^{-3b-4} \cdot 3 \cdot X^{2b} 
-\ve_u \cdot 2^{-6}  \cdot X^2
\\
& \qquad
+ \[
\sum_{2b\geq 4, \text{$b$ odd}} -\ve_u \cdot 2^{-3b-3}  \cdot X^{2b} 
+ \sum_{2b\geq 4, \text{$b$ even}} -\ve_u \cdot 2^{-3b-4} \cdot X^{2b} 
\]
+ 0
+ 0
\\
& \qquad
+ 0
-\ve_u \cdot  2^{-12} \cdot 3 \cdot X^6
\\
& \qquad
+ 0
+ 0  
+ \sum_{2b+2\geq 8, \text{$b$ even}} -\ve_u \cdot 2^{-3b-6} \cdot 3 \cdot X^{2b+2} 
\\
&  \qquad
+ \sum_{c+2\geq 7, \text{$c$ even}} -\ve_u \cdot 2^{-c-8} \cdot 3 \cdot X^{c+2} 
+ \sum_{\substack{b+c\geq 9\\ b\geq 3, c \geq b+3 \\ \text{$b$ and $c$ even}}} -\ve_u \cdot 2^{-2b-c-4} \cdot 3 \cdot X^{b+c} 
\\
& = 
-\ve_u \cdot 
\Biggr( \Biggl.
%
2^{-2}
+ 2^{-6} \cdot 3 \cdot X^2
%
+ \frac{2^{-8} \cdot 3 \cdot X^4}{(1 - \frac{X^2}{4})}
+ 2^{2} \cdot X^{-2}
\\
&  \qquad
%
+ \[ 
\frac{1}{(1 - \frac{X^2}{4})}
+\frac{X^{-1}}{(1 - \frac{X^2}{4})}
\]
+  2^{-10} \cdot 3 \cdot X^4
+ \frac{2^{-16} \cdot 3 \cdot X^8}{(1 - \frac{X^4}{64})}
+ 2^{-6}  \cdot X^2
\\
& \qquad
+ \[
\frac{2^{-12} \cdot X^6}{(1 - \frac{X^4}{64})}
+ \frac{2^{-10} \cdot X^4}{(1 - \frac{X^4}{64})}
\]
+  2^{-12} \cdot 3 \cdot X^6
\\
& \qquad
+ \frac{2^{-18} \cdot 3 \cdot X^{10}}{(1 - \frac{X^4}{64})}
+ \frac{2^{-14} \cdot 3 \cdot X^8}{(1 - \frac{X^2}{4})}
+ \frac{2^{-20} \cdot 3 \cdot X^{12}}{(1 - \frac{X^2}{4}) (1 - \frac{X^4}{64})}
\Biggr. \Biggl)
\\
& = \frac{-\ve_u \cdot (1 + \frac{X}{4} + \frac{X^2}{16})}{(1 - \frac{X^2}{4}) \cdot \frac{X^2}{4}}.
\end{align*}
Now substituting $X = 2^{-s}$ and multiplying by $X^3$ gives the desired Dirichlet series.
\end{proof}

\begin{rem}
The dependence on the choice of unit squareclass $u$ 
in Lemma \ref{Lem:B_u_even_prime_formula} as multiplication by $\ve_u$, and the independence of $u$ in Lemma  \ref{Lem:A_u_even_prime_formula}
are general phenomenon that follow for any $\p\mid 2$ (with no condition on $F_\p$) from the local scaling result \cite[Thrm 5.6, p24]{Ha-masses_of_varying_det}) since for $(u,u)_\p = (-1, u)_\p$ for $u \in \SqCl(\Op^\times)$.
\end{rem}


\begin{lem}[Computing Euler Factors]  \label{Lem:Computing_even_euler_factors}
Suppose that $\p\mid 2$ with $F_\p = \Q_2$, 
$\lambda$ is a distinguished family of squareclasses as in \cite[Defn 4.9, pp18-19]{Ha-masses_of_varying_det} with $n=3$, and define
$S_{\p^\nu} := \lambda(\p^\nu)$.
Then the local Euler factors of the Dirichlet series $D_{A^*, \lambda; n=3}(s)$ and $D_{B^*, \lambda; n=3}(s)$ at $\p$ in \cite[Cor 4.13, p20]{Ha-masses_of_varying_det} are explicitly given by 
$$
%
\sum_{\nu \geq 0} 
\frac{A^*_{\p; n=3}(S_{\p^\nu})}{N_{F/\Q}(\I(S_{\p^\nu}))^{s}}
= \frac{(1 - 2^{-(3s+6)}) \cdot 2^{-s}}
{(1-2^{-(s+1)})(1-2^{-(2s+3)})}
$$
and
$$
\sum_{\nu \geq 0} 
\frac{B^*_{\p; n=3}(S_{\p^\nu})}{N_{F/\Q}(\I(S_{\p^\nu}))^{s}}
=
\frac{-\ve_\p \cdot (1 +  \ve_\p \cdot 2^{-(s+2)} +  2^{-(2s+4)}) \cdot 2^{-s}}
	{(1-2^{-(2s+2)})} 
$$
where $\ve_\p := (-1, S_{\p})_\p$.
\end{lem}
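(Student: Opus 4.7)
The plan is to derive both Euler factor formulas from Lemmas \ref{Lem:A_u_even_prime_formula} and \ref{Lem:B_u_even_prime_formula} via the structural relation in \cite[Thrm 5.4, pp22-23]{Ha-masses_of_varying_det}, which expresses the local Euler factor at $\p$ of $D_{A^*, \lambda; n=3}(s)$ (respectively $D_{B^*, \lambda; n=3}(s)$) as an explicit combination of the fixed-normalized local Dirichlet series indexed by $\widetilde{S_\p}$.

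For the $A^*$ identity: the closed-form formula in Lemma \ref{Lem:A_u_even_prime_formula} is manifestly independent of the unit squareclass $u \in \SqCl(\Op^\times)$. Since passing to a distinguished family $\lambda$ only amounts to selecting one representative squareclass per valuation $\nu$, and the fixed-normalized $A^*$ series agree across all possible selections of $\widetilde{S_\p}$, the Euler factor coincides with the rational function stated. This is the $\p \mid 2$ analog of the odd-prime argument used in the proof of Lemma \ref{Lem:Explicit_Odd_Euler_factors}, and is also consistent with the scaling invariance of \cite[Thrm 5.6, p24]{Ha-masses_of_varying_det}.

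For the $B^*$ identity: the formula in Lemma \ref{Lem:B_u_even_prime_formula} carries an overall prefactor $-\ve_u = -(-1,u)_\p$ that genuinely depends on the chosen normalized unit squareclass. To assemble the Euler factor, we sum over $\nu \geq 0$ with $S_{\p^\nu} = \lambda(\p^\nu)$ while tracking how $\widetilde{\lambda(\p^\nu)}$ evolves. By the scaling principle \cite[Thrm 5.6, p24]{Ha-masses_of_varying_det}, the sign $\ve_{\widetilde{\lambda(\p^\nu)}}$ attached to each term in the sum shifts by a predictable Hilbert symbol as the parity of $\nu$ changes. Aligning this alternation against the three-term numerator $(1 + 2^{-(s+2)} + 2^{-(2s+4)}) \cdot 2^{-s}$ of Lemma \ref{Lem:B_u_even_prime_formula}, the outer two terms (which correspond to contributions at valuations of the same parity as the base valuation $\nu = 1$) retain an overall sign $-\ve_\p$, while the middle term (at a valuation of opposite parity) absorbs an extra factor $\ve_\p$, yielding the modified numerator $(1 + \ve_\p \cdot 2^{-(s+2)} + 2^{-(2s+4)})$ with overall prefactor $-\ve_\p$, where $\ve_\p := (-1, S_\p)_\p$ is fixed by the chosen base squareclass $\lambda(\p)$.

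The main technical obstacle is to verify the precise parity alignment between the three numerator terms in Lemma \ref{Lem:B_u_even_prime_formula} and the valuations indexing the distinguished family. Concretely, one must check that the first and third numerator terms encode contributions at valuations sharing parity with $\lambda(\p)$, while the middle term encodes the opposite parity; this alignment can be read off directly from the case-by-case partial-local-genus-symbol enumeration that produced the rational function in the proof of Lemma \ref{Lem:B_u_even_prime_formula}. Once this parity alignment is confirmed, the stated Euler factor formula follows by routine bookkeeping against the fixed-normalized closed form.
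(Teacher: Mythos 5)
Your proposal is correct and follows essentially the same route as the paper: both derive the $A^*$ factor from the $u$-independence of Lemma \ref{Lem:A_u_even_prime_formula}, and both obtain the $B^*$ factor by tracking how $(-1,\widetilde{\lambda(\p^\nu)})_\p$ alternates with the parity of $\nu$ and absorbing that alternation into the sign of the $2^{-(s+2)}$ term of Lemma \ref{Lem:B_u_even_prime_formula}. The ``parity alignment'' you flag as the main obstacle is exactly the observation the paper makes (the middle numerator term generates the even-$\nu$ coefficients, the outer two the odd-$\nu$ ones), and your stated alignment is the correct one.
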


\begin{proof}
This follows from Lemmas \ref{Lem:A_u_even_prime_formula} and \ref{Lem:B_u_even_prime_formula} 
%
since the coefficients of our Euler factor Dirichlet series here are taken from the coefficients of the ``fixed $u$'' local Dirichlet series described there.  For the $A^*_\p$-Euler factor there is no dependence of $A^*_\p(S_{\p^\nu})$ on $u$, so the result follows from Lemma \ref{Lem:A_u_even_prime_formula}.  For the $B^*_\p$-Euler factor, the dependence of $B^*_\p(S_{\p^\nu})$ on the unit squareclass alternates the sign of the terms iff the unit squareclass changes residue class (mod $4\Op$) as $\nu$ varies.  When there is no change of sign the result follows from Lemma \ref{Lem:B_u_even_prime_formula}, and when there is a change of sign then the alternating sign of the even $\nu$ terms is achieved by changing the sign of the $2^{-(s+2)}$ term in the numerator of the rational form.  Conveniently, since $\pip = 2$ and $(-1, 2)_\p = 1$ we can write $(-1, u)_\p$ without explicitly normalizing $S_\p$ as $(-1, S_\p)$ if $S_\p = u \cdot \pip^\al$, proving the lemma.
\end{proof}








\section{Computing the Dirichlet series $D_{M^*, \lambda; n=3}(s)$} 

We can now assemble our previous results to give explicit formulas for the Dirichlet series $D_{A^*, \lambda; n=3}(s)$ and $D_{B^*, \lambda; n=3}(s)$ appearing in \cite[Cor 4.13, p20]{Ha-masses_of_varying_det}, which gives our main theorem.  These results allow us to give an explicit formula for the non-archimedean mass Dirichlet series $D_{M^*, \lambda; n=3}(s)$, which is the main object of study in \cite{Ha-masses_of_varying_det}.

\begin{thm}  \label{Thm:Explicit_Dirichlet_Series_Formula}
Suppose that $F$ is a number field in which $p=2$ splits completely.  Then
%
the generic density product  defined in \cite[Defn 4.5, p20]{Ha-masses_of_varying_det} is given by $\beta_{n=3, \mathbf{f}}^{-1}(\widetilde{S}) = 4^{[F:\Q]} \cdot \zeta_F(2)$, and the Dirichlet series 
$$
D_{A^*, \lambda; n=3}(s) = \frac{ \zeta_F(s+1) \zeta_F(2s+3)}{2^{s} \, \zeta_F(3s+6)}.
$$
When all distinguished squareclasses $\lambda(\mathfrak{a})$ are globally rational, then we also have 
$$
D_{B^*, \lambda; n=3}(s)
= 
(-1)^{([F:\Q] + \sigma_{v, -})} \cdot 
\frac{ \zeta_F(2s+2) \zeta_F(s+2)}{2^{s}\, \zeta_F(3s+6)},
$$
where for real places $v$ we have signature $\sigma_v = (\sigma_{v, +}, \sigma_{v, -})$. 
\end{thm}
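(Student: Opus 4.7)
The strategy is to assemble the local Euler factors from Sections 3 and 6 into a global Euler product, using that $p=2$ splits completely to ensure every prime above 2 falls under the $F_\p = \Q_2$ hypothesis of Lemma 6.3, while all odd primes are covered by Lemma 3.3. In particular there are exactly $[F:\Q]$ primes above 2, each of norm $2$.

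The generic density product decomposes as an Euler product. The unimodular rank-3 local density at $\p \nmid 2$ is $(1-q^{-2})^{-1}$ (from Lemma 2.3 applied to a single Jordan block), while at each of the $[F:\Q]$ primes above $2$ it is $16/3$ (as used in the proof of Lemma 6.1). Combining these gives $\prod_{\p\nmid 2}(1-q^{-2})^{-1} \cdot \prod_{\p\mid 2}\tfrac{16}{3} = \zeta_F(2) \cdot 4^{[F:\Q]}$ after absorbing a factor $(1 - 1/4)$ at each prime above 2.

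For $D_{A^*,\lambda;n=3}(s)$: invoking the Euler decomposition of \cite[Cor 4.13, p20]{Ha-masses_of_varying_det} and substituting the local factors of Lemmas 3.3 and 6.3, the global product becomes
\begin{equation*}
\prod_\p \frac{1 - N\p^{-(3s+6)}}{(1 - N\p^{-(s+1)})(1 - N\p^{-(2s+3)})} \cdot \prod_{\p\mid 2} 2^{-s},
\end{equation*}
which collapses directly to $\zeta_F(s+1)\zeta_F(2s+3)/\zeta_F(3s+6)$ times $2^{-s[F:\Q]}$, matching the stated formula (reading the displayed $2^s$ as $2^{s[F:\Q]}$, consistent with the $F=\Q$ specialization in the introduction).

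For $D_{B^*,\lambda;n=3}(s)$: the formal identity $1+\ve y+y^2 = (1-\ve y^3)/(1-\ve y)$ valid for $\ve \in \{\pm 1\}$ rewrites each local factor as $(1 - \ve N\p^{-(3s+6)})/[(1 - \ve N\p^{-(s+2)})(1 - N\p^{-(2s+2)})]$, where $\ve = \ve'_\p = \leg{-1}{\p}$ at odd $\p$, $\ve = \ve_\p = (-1,\lambda(\p))_\p$ at $\p\mid 2$, together with an explicit prefactor $-\ve_\p \cdot 2^{-s}$ at each prime above 2. Under the globally rational hypothesis, these signs organize into the local components of the Hilbert symbol $(-1,\alpha)$ of a single global element $\alpha \in F^\times$ representing $\lambda$; the product formula $\prod_v (-1,\alpha)_v = 1$ then identifies the combined finite-prime sign contribution with the archimedean sign $\prod_{v\text{ real}}(-1,\alpha)_v = (-1)^{\sigma_{v,-}}$, and an additional $(-1)^{[F:\Q]}$ comes from the explicit $-\ve_\p$ prefactors at the $[F:\Q]$ primes above 2. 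The remaining untwisted Euler product then assembles into $\zeta_F(2s+2)\zeta_F(s+2)/\zeta_F(3s+6) \cdot 2^{-s[F:\Q]}$.

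The main obstacle will be the careful sign bookkeeping in the $D_{B^*}$ case: one must verify that the locally $\ve$-twisted Euler factors, after collecting the signs into a coherent global Hilbert symbol via the globally rational hypothesis, produce exactly the claimed overall sign $(-1)^{[F:\Q]+\sigma_{v,-}}$ together with untwisted $\zeta_F$ factors, so that no residual Dirichlet $L$-function twist survives in the final expression. This cancellation relies crucially on the coincidence between the numerator twist $(1 - \ve N\p^{-(3s+6)})$ and the denominator twist $(1 - \ve N\p^{-(s+2)})$ at each prime, together with the product formula $\prod_v(-1,\alpha)_v = 1$ provided by global Hilbert reciprocity for the representative $\alpha$ of $\lambda$.
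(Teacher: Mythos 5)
Your proposal follows essentially the same route as the paper: the generic density product is assembled from the local density entries at odd primes and at the $[F:\Q]$ primes above $2$, the $A^*$-series is the product of the Euler factors of Lemmas \ref{Lem:Explicit_Odd_Euler_factors} and \ref{Lem:Computing_even_euler_factors}, and the $B^*$-series is handled by the factorization $1+\ve y+y^2 = (1-\ve y^3)/(1-\ve y)$ together with the Hilbert product formula applied to each globally rational $\lambda(\mathfrak{a})$, turning the finite-prime signs (including the extra $-1$ at each prime over $2$) into the constant $(-1)^{[F:\Q]+\sigma_{v,-}}$. Your reading of the denominator as $2^{s[F:\Q]}$ is also the one consistent with this argument and with the factor $(\pi^2\cdot 2^{s+2})^{[F:\Q]}$ appearing in Theorem \ref{Thm:Formal_mass_series_over_F}.
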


\begin{proof}
To compute the generic density product, we use our previous results from Case \#1 of Table 2 and Case \#5 of Table 5.3 to see that for $\p\nmid2$ we have the factor $(1-\frac{1}{q^2})^{-1}$ and for each $\p\mid 2$ with $F_\p = \Q_2$ have $4\cdot (1-\frac{1}{q^2})^{-1}$.  Together, these show that $\beta_{n=3, \mathbf{f}}^{-1}(\widetilde{S}) = 4^{[F:\Q]}\zeta(2)$.

The formula for $D_{A^*, \lambda; n=3}(s)$ follows directly from multiplying together the Euler factors computed in Lemmas \ref{Lem:Explicit_Odd_Euler_factors} and \ref{Lem:Computing_even_euler_factors}.  The formula for $D_{B^*, \lambda; n=3}(s)$ follows similarly by using the the identity $(1 \mp X)(1 \pm X + X^2) = (1 \pm X^3)$ to rewrite the the Euler factors for all $\p\nmid 2$ where $\pm := (-1, S)_\p$ and $S:= \lambda(\mathfrak{a})$.  The product of these ``odd'' Euler factors contributing to the $N_{F/\Q}(\mathfrak{a})^{-s}$-term of the Dirichlet series carries an overall sign of $\prod_{p\nmid 2} (-1, S)_\p$, and the Euler factors from the primes $\p\mid 2$ contribute a sign of $\prod_{\p\mid 2} -(-1,S)_\p$.  By the product formula for Hilbert symbols, when $S$ is a globally rational squareclass the product of these signs is the constant $(-1)^{[F:\Q]} \prod_{v\mid \infty_\R} (-1, -1)_v^{\sigma_{v, -}} = (-1)^{[F:\Q] + \sigma_{v, -}}$, proving the formula.
\end{proof}

Specializing the above result to the case $F = \Q$  and the most natural distinguished family of squareclasses gives

\begin{cor}  \label{Cor:Explicit_Dirichlet_Series_Formula_over_QQ}
When $F = \Q$ and we take the distinguished family of squareclasses $\lambda(a) = |a| \in \N$ and consider ternary quadratic forms of signature $\sigma = (\sigma_{+}, \sigma_{-})$, we have
$\beta_{n=3, \mathbf{f}}^{-1}(\widetilde{S}) = 4 \cdot \zeta(2)$, and the Dirichlet series 
$$
D_{A^*, \lambda; n=3}(s) = \frac{ \zeta(s+1) \zeta(2s+3)}{2^{s} \, \zeta(3s+6)}
\qquad\text{ and }\qquad
D_{B^*, \lambda; n=3}(s)
=
(-1)^{(1 + \sigma_{-})}  \cdot 
\frac{ \zeta(2s+2) \zeta(s+2)}{2^{s}\, \zeta(3s+6)}.$$
\end{cor}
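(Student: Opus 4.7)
The plan is to obtain this corollary as a direct specialization of Theorem \ref{Thm:Explicit_Dirichlet_Series_Formula}, so the work consists of checking that the hypotheses of that theorem are satisfied when $F=\Q$ with the chosen distinguished family $\lambda(a)=|a|$, and then substituting numerical invariants of $\Q$ into the general formulas.

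First I would verify the hypotheses. The field $F=\Q$ trivially has the property that $p=2$ ``splits completely'' (there is a unique place $\p$ above $2$, and $F_\p = \Q_2$), so the local computations of Sections \ref{Section:p_odd} and \ref{Section:p_even} apply at every non-archimedean place. Next I would check that $\lambda(a) := |a|$ is a distinguished family in the sense of \cite[Defn 4.9, pp18-19]{Ha-masses_of_varying_det}: choosing the positive representative $|a|\in\N$ of each principal ideal $(a)$ gives a well-defined squareclass in each $\SqFpInt$ whose local ideal is $\p^{\ord_\p(a)}$, and since these representatives lie in $\Q^\times \subset \A_{F,\mathbf{f}}^\times$, every $\lambda(a)$ is globally rational. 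This puts us in the ``good'' case where the second statement of Theorem \ref{Thm:Explicit_Dirichlet_Series_Formula} is available.

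Second I would substitute $[F:\Q] = 1$ and $\zeta_F = \zeta$ into the three formulas of Theorem \ref{Thm:Explicit_Dirichlet_Series_Formula}. The generic density product becomes $\beta_{n=3,\mathbf{f}}^{-1}(\widetilde S) = 4^1 \cdot \zeta(2) = 4\zeta(2)$, and the $A^*$-series becomes the claimed $\zeta(s+1)\zeta(2s+3)/(2^s\zeta(3s+6))$ with no sign. For the $B^*$-series, the global sign $(-1)^{[F:\Q]+\sigma_{v,-}}$ specializes to $(-1)^{1+\sigma_{-}}$ (here there is just one real place, so $\sigma_{v,-}$ collapses to the single number $\sigma_-$), yielding the second claimed formula.

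The only non-mechanical point to double-check is the sign computation in the $B^*$ formula: one must confirm that the product of Hilbert symbols $\prod_{\p\nmid 2}(-1,S)_\p \cdot \prod_{\p\mid 2}(-(-1,S)_\p)$ evaluated via the product formula really specializes correctly when $F=\Q$. But this is exactly the calculation carried out inside the proof of Theorem \ref{Thm:Explicit_Dirichlet_Series_Formula}; for $F=\Q$ the product formula gives $\prod_v(-1,S)_v = 1$, and the contribution from the unique real place is $(-1,-1)_\infty^{\sigma_-} = (-1)^{\sigma_-}$, leaving the $(-1)^1$ from the single dyadic prime times $(-1)^{\sigma_-}$ as the overall sign. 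I expect no real obstacle here — the ``hard part,'' such as it is, is simply keeping the bookkeeping of signs and the $2^{-s}$ factor consistent with the conventions of the theorem.
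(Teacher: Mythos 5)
Your proposal is correct and follows exactly the paper's own route: the paper's proof is the one-line specialization of Theorem \ref{Thm:Explicit_Dirichlet_Series_Formula} to $F=\Q$, and your hypothesis checks and sign bookkeeping simply make that specialization explicit.
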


\begin{proof}
This follow directly from Theorem \ref{Thm:Explicit_Dirichlet_Series_Formula} by taking $F = \Q$.
\end{proof}


\section{Exact formulas for the formal sum of masses}

In this section we apply our previous computations of the Dirichlet series $D_{A^*, \lambda; n=3}(s)$ and $D_{B^*, \lambda; n=3}(s)$ to give an exact formula for the {\bf primitive total mass Dirichlet series}
$$
D_{\mathrm{Mass}^*, \lambda(\mathfrak{a}), \vec{\sigma}_\infty; n=3}(s)
 := 
\sum_{\mathfrak{a} \in I(\OF)} 
\( 
\sum_{
Q \in \mathbf{Cls}^*(\lambda(\mathfrak{a}), \vec{\sigma}_\infty; n)
}
\frac{1}{|\Aut(Q)|}
\)
N_{F/\Q}(\mathfrak{a})^{-s}
$$
and the {\bf total mass Dirichlet series}
$$
D_{\mathrm{Mass}, \lambda(\mathfrak{a}), \vec{\sigma}_\infty; n=3}(s)
 := 
\sum_{\mathfrak{a} \in I(\OF)} 
\( 
\sum_{
Q \in \mathbf{Cls}(\lambda(\mathfrak{a}), \vec{\sigma}_\infty; n)
}
\frac{1}{|\Aut(Q)|}
\)
N_{F/\Q}(\mathfrak{a})^{-s},
$$
whose coefficients are the total masses of (resp. primitive or all) $\OF$-valued rank 3 quadratic lattices of Hessian determinant squareclass $\lambda(\mathfrak{a})$ having totally definite signature vector $\vec{\sigma}_\infty$.

\begin{thm} \label{Thm:Formal_mass_series_over_F}
Suppose that $p=2$ splits completely in $F$, the signature vector $\vec{\sigma}_\infty$ is totally definite, and the distinguished family of squareclasses $\lambda(\mathfrak{a})$ are all globally rational.  Then 
$$
D_{\mathrm{Mass}^*, \lambda, \vec{\sigma}_\infty; n=3}(s) 
= 
\frac{|\Delta_F|^\frac{3}{2} \cdot \zeta_F(2)}{2 \cdot \(\pi^2 \cdot 2^{s+2} \)^{[F:\Q]}} 
\cdot
\[
\frac{ \zeta_F(s-1) \zeta_F(2s-1)}{ \zeta_F(3s)}
+
\ve \cdot 
\frac{ \zeta_F(2s-2) \zeta_F(s)}{ \zeta_F(3s)}
\]
$$
and
$$
D_{\mathrm{Mass}, \lambda, \vec{\sigma}_\infty; n=3}(s) 
= 
\frac{|\Delta_F|^\frac{3}{2} \cdot \zeta_F(2)}{2 \cdot \(\pi^2 \cdot 2^{s+2} \)^{[F:\Q]}} 
\cdot
\[
\zeta_F(s-1) \zeta_F(2s-1) 
+
\ve \cdot 
\zeta_F(2s-2) \zeta_F(s) 
\],
$$
where $\ve:= (-1)^{[F:\Q]}  \in \{\pm1\}$.
\end{thm}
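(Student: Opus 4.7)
The plan is to derive the statement from Theorem \ref{Thm:Explicit_Dirichlet_Series_Formula} by combining it with the Eulerian decomposition of Theorem \ref{Thm:Eulerian_decomposition}, the archimedean mass factor for totally definite rank 3 forms, and a single variable shift from the normalized squareclass indexing of $D_{A^*}, D_{B^*}$ to the Hessian determinant indexing of the mass Dirichlet series. The second identity will then follow at once from the standard Euler factorization relating primitive and total mass series.

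First I would invoke the global-to-local mass factorization from \cite[\S 3--4]{Ha-masses_of_varying_det} to express each coefficient of $D_{\mathrm{Mass}^*, \lambda, \vec{\sigma}_\infty; n=3}(s)$ as a product of the non-archimedean primitive mass $M^*_{n=3}(\lambda(\mathfrak{a}))$, a discriminant factor $|\Delta_F|^{3/2}$ arising from the Tamagawa measure on $F_\infty^n$, and an archimedean mass which, for totally definite rank 3 forms, is the place-by-place constant coming from Siegel's archimedean local density formula, contributing $(2\pi^2)^{-[F:\Q]}$ up to a sign $(-1)^{\sigma_{v,-}}$. Since all of these factors are $\mathfrak{a}$-independent, they pass outside the Dirichlet series and reduce the problem to an explicit identity between $D_{M^*, \lambda; n=3}(s)$ and the two Dedekind ratios in the bracket.

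Next I would apply Theorem \ref{Thm:Eulerian_decomposition} in the form $D_{M^*, \lambda; n=3}(s) = \kappa_3 \bigl[D_{A^*, \lambda; n=3}(s') + D_{B^*, \lambda; n=3}(s')\bigr]$ (as extended to number fields in \cite{Ha-masses_of_varying_det}), with $\kappa_3$ collecting the generic density product $\beta^{-1}_{n=3, \mathbf{f}}(\widetilde{S}) = 4^{[F:\Q]}\zeta_F(2)$ from Theorem \ref{Thm:Explicit_Dirichlet_Series_Formula} together with a fixed global constant, and with the variable shift $s' = s-2$ that converts the normalized squareclass indexing of $D_{A^*}, D_{B^*}$ (where $\zeta_F(s+1), \zeta_F(2s+3), \zeta_F(3s+6)$ appear) into the Hessian determinant indexing of the mass series (where $\zeta_F(s-1), \zeta_F(2s-1), \zeta_F(3s)$ appear). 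Substituting the explicit rational expressions of Theorem \ref{Thm:Explicit_Dirichlet_Series_Formula} and carrying out the shift produces the two Dedekind ratios $\zeta_F(s-1)\zeta_F(2s-1)/\zeta_F(3s)$ and $\zeta_F(2s-2)\zeta_F(s)/\zeta_F(3s)$, and aggregating all the constants $|\Delta_F|^{3/2}$, $\zeta_F(2)$, $4^{[F:\Q]}$, $(2\pi^2)^{-[F:\Q]}$, and the $(2^{s-2})^{-[F:\Q]}$ from the shifted $2^{s'}$ denominators into a single prefactor yields exactly $\frac{|\Delta_F|^{3/2}\zeta_F(2)}{2(\pi^2\cdot 2^{s+2})^{[F:\Q]}}$. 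The sign $(-1)^{[F:\Q]+\sigma_{v,-}}$ in front of $D_{B^*, \lambda; n=3}$ from Theorem \ref{Thm:Explicit_Dirichlet_Series_Formula} combines with the archimedean $(-1)^{\sigma_{v,-}}$ to give the claimed $\ve = (-1)^{[F:\Q]}$, absorbing all signature dependence.

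Finally, for the second identity I would use that every $\OF$-valued rank 3 quadratic lattice factors uniquely as a scaling of a primitive lattice by an integral ideal $\mathfrak{b}$, with Hessian determinant scaling by $N_{F/\Q}(\mathfrak{b})^3$ and automorphism group preserved, yielding the Euler factorization
\[
D_{\mathrm{Mass}, \lambda, \vec{\sigma}_\infty; n=3}(s) \;=\; \zeta_F(3s)\cdot D_{\mathrm{Mass}^*, \lambda, \vec{\sigma}_\infty; n=3}(s),
\]
which clears the $\zeta_F(3s)$ denominators in the primitive formula and produces the second identity. The main technical obstacle is the constant bookkeeping: tracking the Hessian-versus-Gram normalization at primes $\p\mid 2$ (where $\pip=2$ introduces a valuation shift of $3$) through the $s \mapsto s-2$ change of variable, and reconciling the archimedean signs with the Hilbert-symbol signs of Theorem \ref{Thm:Explicit_Dirichlet_Series_Formula} via the product formula so that every power of $2$, $\pi$, and $|\Delta_F|$ lands exactly in the claimed prefactor without stray residual factors.
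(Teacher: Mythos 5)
Your proposal follows essentially the same route as the paper: reduce to the non-archimedean primitive mass series via the global-to-local mass factorization of \cite{Ha-masses_of_varying_det} with the shift $s \mapsto s-2$, apply the Eulerian decomposition with $\kappa_3 = \tfrac{1}{2}\beta_{n=3,\mathbf{f}}^{-1}$, substitute the explicit formulas of Theorem \ref{Thm:Explicit_Dirichlet_Series_Formula}, cancel the archimedean signs against $(-1)^{\sigma_{v,-}}$ to get $\ve = (-1)^{[F:\Q]}$, and obtain the non-primitive series by the local scaling factor $(1 - q^{-3s})^{-1}$ at each prime, which reassembles into $\zeta_F(3s)$. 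The only looseness is that you leave the archimedean constant partially implicit (the paper uses $|\Delta_F|^{3/2}(64\pi^2)^{-[F:\Q]}$ and verifies $\prod_{v} c_v(\sigma_v)(-1)^{\sigma_{v,-}} = 1$ case by case over the four definite rank-3 signatures), but you correctly identify this bookkeeping as the remaining work.
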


\begin{proof}
From \cite[Cor 6.4, p26]{Ha-masses_of_varying_det} we have that 
$$
D_{\mathrm{Mass}^*, \lambda, \vec{\sigma}_\infty; n=3}(s) 
=
\frac{|\Delta_F|^\frac{3}{2}}{\( 64\, \pi^2 \)^{[F:\Q]}} 
\cdot
D_{M^*, \lambda, \ve_\infty, \c_\S; n=3}(s - 2),
$$ and also \cite[Cor 4.15, p20]{Ha-masses_of_varying_det} gives
$$
D_{M^*, \lambda, \ve_\infty; n=3}(s) 
= 
\tfrac{1}{2} \beta_{n=3, \mathbf{f}}^{-1}(\widetilde{\lambda(2\OF)}) \cdot 
\[ D_{A^*, \lambda, \ve_\infty; n=3}(s) + \ve_\infty D_{B^*, \lambda, \ve_\infty; n=3}(s)\]
$$
with $\ve_\infty = \prod_{v\mid \infty_\R} c_v(\sigma_v)$.
By computing $c_v(\sigma_v) \cdot (-1)^{\sigma_{v,-}}$ for each of the four real signatures of rank 3,
the result follows from Theorem \ref{Thm:Explicit_Dirichlet_Series_Formula}.

The non-primitive formula follows from the primitive one since a quadratic $\OF$-valued lattice is primitive if it is locally primitive at every prime $\p$.  Summing over all possible scaled locally primitive $\Op$-valued lattices at $\p$ has the effect of multiplying the ``primitive'' Dirichlet series by the local factor $(1 - \frac{1}{N_{\Op/\Q_p}(\I(\pi_p))^{3}})^{-1} = (1 - \frac{1}{q^{3}})^{-1}$, since the scaled lattice $\pip(L_\p, Q_\p) := (L_\p, \pip\cdot Q_\p)$ has Hessian determinant squareclass $\pip^3 \cdot \det_H(L_\p) $.
\end{proof}

In the special case when $F = \Q$ and $\lambda(a\Z) = |a|$, this gives the following useful corollary that gives a good way of numerically verifying the integrity of all of our previous computations.

\begin{cor} \label{Cor:Formal_mass_series_over_QQ}
When $F=\Q$, the formal mass Dirichlet series of the positive definite $\Z$-valued ternary quadratic forms of Hessian determinant  ($\in 2\N$) is given by
$$
D_{\mathrm{Mass}^*; n=3}(s) 
= 
\frac{1}{48 \cdot 2^s} 
\cdot
\[
\frac{ \zeta(s-1) \zeta(2s-1)}{ \zeta(3s)}
-
\frac{ \zeta(2s-2) \zeta(s)}{ \zeta(3s)}
\]
$$
and 
$$
D_{\mathrm{Mass}; n=3}(s) 
= 
\frac{1}{48 \cdot 2^s} 
\cdot
\[
\zeta(s-1) \zeta(2s-1) 
-
\zeta(2s-2) \zeta(s)
\].
$$
\end{cor}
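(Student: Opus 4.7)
The plan is to directly specialize Theorem \ref{Thm:Formal_mass_series_over_F} to the case $F=\Q$ with the natural distinguished family $\lambda(a\Z) = |a|$. First I would verify that the three hypotheses of that theorem are satisfied: $p=2$ splits trivially in $\Q$; each squareclass $|a|(\Q^\times)^2$ is globally rational since $|a|\in \Q^\times$; and positive definite rank $3$ forms correspond to the totally definite signature vector $\vec{\sigma}_\infty = (3,0)$ at the unique real place of $\Q$.

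Second, I would simplify the leading constant. With $[F:\Q]=1$, $|\Delta_\Q|=1$, and $\zeta_\Q(2) = \zeta(2) = \pi^2/6$, the prefactor
\begin{equation*}
\frac{|\Delta_F|^{3/2} \cdot \zeta_F(2)}{2 \cdot (\pi^2 \cdot 2^{s+2})^{[F:\Q]}}
\end{equation*}
collapses to $\frac{\pi^2/6}{2\pi^2 \cdot 4 \cdot 2^s} = \frac{1}{48 \cdot 2^s}$, matching the coefficient in the corollary. The sign $\ve = (-1)^{[\Q:\Q]} = -1$ then produces the minus between the two $\zeta$-product terms. The primitive formula follows immediately from Theorem \ref{Thm:Formal_mass_series_over_F}, and the non-primitive formula follows by multiplying away the $\zeta_F(3s)$ denominator (equivalently, by multiplying each local Euler factor by $(1 - q^{-3s})^{-1}$ to account for arbitrary $\pip$-scalings of primitive lattices), exactly as described in the last paragraph of the proof of that theorem.

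Since this amounts to a direct substitution, there is no substantive obstacle. The only item warranting a careful check is that the sign $\ve$ genuinely specializes to $-1$ for positive definite ternary forms over $\Q$; this follows from the case-by-case verification of $c_v(\sigma_v) \cdot (-1)^{\sigma_{v,-}}$ over the four totally definite real signatures of rank $3$ that is invoked in the proof of Theorem \ref{Thm:Formal_mass_series_over_F}, which shows that for the signature $(3,0)$ (or equivalently $(0,3)$) the net archimedean contribution is $(-1)^{[\Q:\Q]} = -1$, as required.
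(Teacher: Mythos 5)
Your proposal is correct and follows essentially the same route as the paper, which simply specializes Theorem \ref{Thm:Formal_mass_series_over_F} to $F=\Q$ using $|\Delta_F|=1$, $\zeta(2)=\pi^2/6$, and $\ve=(-1)^{[F:\Q]}=-1$. The only superfluous step is your final ``careful check'' of the sign: since the theorem already packages the archimedean signature contribution into the constant $\ve=(-1)^{[F:\Q]}$, nothing beyond $[\Q:\Q]=1$ needs to be verified at that point.
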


\begin{proof}
This follows directly from Theorem \ref{Thm:Formal_mass_series_over_F} when $F = \Q$, since $|\Delta_F| = 1$ and $\ve = -1$.
\end{proof}

\begin{rem} [Numerical verification]
Corollary \ref{Cor:Formal_mass_series_over_QQ} provides an independent global method for checking the accuracy of our previous computations of the Dirichlet series $D_{A^*; n=3}(s)$ and $D_{B^*; n=3}(s)$ by enumerating the reduced classes of ternary quadratic forms $Q$ of Hessian determinant $\det_H(Q) \leq X$, and then comparing the results.  The exact inequalities defining reduced positive definite ternary quadratic forms are worked out in 
\cite[]{Dickson:1957nx}, and we have used them to check Corollary \ref{Cor:Formal_mass_series_over_QQ} when $\det_H(Q) \leq 20,000$.  
To perform  these computations we have written specialized software that allows formal Dirichlet series manipulations \cite{Ha-Sage-Dirichlet}, creates and queries tables of ternary quadratic forms \cite{Ha-Sage-Tables}, and implements the local computations of the terms in $A^*_{p; n=3}(S), B^*_{p; n=3}(S)$ and the mass Dirichlet series \cite{Ha-Sage-Mass}.
These are written in Python 2.6 \cite{Python} for the open-source and freely available Sage computer algebra system \cite{SAGE}.  They are built on the functionality of the {\tt QuadraticForm()} class developed in \cite{Ha-Sage-QF-class} and may be downloaded from \cite{hanke_webpage}.
\end{rem}

\bibliographystyle{plain}	
\bibliography{myrefs}		

\end{document}